\newtheorem{definition}{Definition}
\newtheorem{proposition}[definition]{Proposition}
\newtheorem{theorem}[definition]{Theorem}
\newtheorem{corollary}[definition]{Corollary}
\newtheorem{conjecture}[definition]{Conjecture}
\pgfmathsetmacro\previousroundedendlength{min(\pgfdecorationsegmentlength,\pgfdecoratedinputsegmentlength)}
         \pgfmathsetmacro\roundedstartlength{min(\previousroundedendlength,\pgfdecoratedinputsegmentlength/2)}
\pgfmathsetmacro\previousroundedendlength{min(\pgfdecorationsegmentlength,\pgfdecoratedinputsegmentlength/2)}
\pgfmathsetmacro\previousroundedendlength{min(\pgfdecorationsegmentlength,\pgfdecoratedinputsegmentlength)}
        \pgfmathapproxequalto{\pgfdecoratedinputsegmentlength}{\pgfdecoratedremainingdistance}
               \pgfmathsetmacro\roundedstartlength{min(\previousroundedendlength,\pgfdecoratedinputsegmentlength)}
               \pgfmathsetmacro\roundedstartlength{min(\previousroundedendlength,\pgfdecoratedinputsegmentlength/2)}
\pgfmathsetmacro\previousroundedendlength{min(\pgfdecorationsegmentlength,\pgfdecoratedinputsegmentlength/2)}
\tikzset{
    dynamic rounded corners/.style={
        decorate,
        decoration={
            dynamic rounded corners,
            segment length=#1
        }
    }
}
\tikzset{
    dynamic rounded corners final/.style={
        decorate,
        decoration={
            dynamic rounded corners final,
            segment length=#1
        }
    }
}
\def\defaultfacecolor{green}
\def\defaultroundedness{3pt}
\def\defaultslicedistance{1}
\def\defaultbraiddistance{0.1}
\tikzset{cross/.style={cross out, draw, 
         minimum size=2*(#1-\pgflinewidth), 
         inner sep=0pt, outer sep=0pt}}
\tikzstyle{edge}=[]
\tikzstyle{bnode}=[draw,black,circle,fill=black,inner sep=1pt]
\tikzstyle{bplace}=[draw,blue,thick,cross,inner sep=2pt]
\tikzstyle{yspot}=[draw,\defaultfacecolor,circle,fill=\defaultfacecolor,inner sep=1pt]
\tikzstyle{eface}=[draw,\defaultfacecolor]
\tikzstyle{rnode}=[draw,red,circle,fill=red,inner sep=1pt]
\tikzstyle{wnode}=[draw,white,circle,fill=white,inner sep=1.5pt]
\tikzstyle{strike}=[edge,draw,black,circle,inner sep=1.5pt,strike out]
\tikzstyle{bstrike}=[edge,draw,black,circle,inner sep=1.5pt,strike out,rotate=90]
\tikzstyle{dedge}=[postaction={nomorepostaction,decorate,
\tikzstyle{blanknode}=[]
\tikzstyle{domainlabel}=[circle,fill=white,inner sep=1pt]
\tikzset{nomorepostaction/.code={\let\tikz@postactions\pgfutil@empty}}
\newcommand\storeface[2]{\expandafter\xdef\csname faceid#1\endcsname{#2}}
\newcommand\getface[1]{\csname faceid#1\endcsname}
\newcounter{nextfaceid}
\newcounter{maxfaceseen}
\newcommand\storefinalface[2]{\expandafter\xdef\csname finalfaceid#1\endcsname{#2}}
\newcommand\getfinalface[1]{\csname finalfaceid#1\endcsname}
\newcommand\storefacecolor[2]{\expandafter\xdef\csname facecolor#1\endcsname{#2}}
\newcommand\getfacecolor[1]{\csname facecolor#1\endcsname}
\newcommand\storeminslice[2]{\expandafter\xdef\csname minfaceslice#1\endcsname{#2}}
\newcommand\getminslice[1]{\csname minfaceslice#1\endcsname}
\newcommand\storeedgeid[2]{\expandafter\xdef\csname edgeid#1\endcsname{#2}}
\newcommand\getedgeid[1]{\csname edgeid#1\endcsname}
\newcounter{nextedgeid}
\newcommand\storeedgepath[2]{\expandafter\xdef\csname edgepath#1\endcsname{#2}}
\newcommand\getedgepath[1]{\csname edgepath#1\endcsname}
\newcommand\expandedgepath[2]{\storeedgepath{\getedgeid{#1}}{\getedgepath{\getedgeid{#1}} #2}}
\newcommand\storeedgecolor[2]{\expandafter\xdef\csname edgecolor#1\endcsname{#2}}
\newcommand\getedgecolor[1]{\csname edgecolor#1\endcsname}
\newcounter{nextdeferrednodeid}
\newcommand\storedeferrednodeid[2]{\expandafter\xdef\csname deferrednodeid#1\endcsname{#2}}
\newcommand\storedeferrednodestyle[2]{\expandafter\xdef\csname deferrednodestyle#1\endcsname{#2}}
\newcommand\getdeferrednodeid[1]{\csname deferrednodeid#1\endcsname}
\newcommand\getdeferrednodestyle[1]{\csname deferrednodestyle#1\endcsname}
\newcommand\mergefaces[2]{
   \pgfmathtruncatemacro\valfa{\getfinalface{#1}}
   \pgfmathtruncatemacro\valfb{\getfinalface{#2}}
   \pgfmathtruncatemacro\newfaceid{min(\valfa, \valfb)}
   \pgfmathtruncatemacro\latestfaceid{\themaxfaceseen - 1}
   \foreach \x in {0,...,\latestfaceid}{
       \ifthenelse{\getfinalface{\x}=\valfa \OR \getfinalface{\x}=\valfb}{
          \storefinalface{\x}{\newfaceid}
       }{}
   }     
}
\def\drawfaces{0}
\def\capscups{0}
\newcommand\enablecapscups[0]{
  \def\capscups{1};
  \def\slicedistance{1.75}
}
\newcommand\disablecapscups[0]{
  \def\capscups{0}
  \ifthenelse{\multislice=0}{
    \def\slicedistance{1}
  }{}
}
\def\writefaceids{0}
\def\drawplaces{0}
\newcommand\startdiagram[1]{
   \wlog{-- Start Diagram --}
   \pgfmathtruncatemacro\maxstrandidx{#1 - 1}
   \pgfmathsetmacro\centeringoffset{-0.5 * #1}
   \pgfmathtruncatemacro\diagramlevel{0}
   \pgfmathsetmacro\verticalpos{0}

   \foreach \x in {0,...,\maxstrandidx}{
      \storeface{\x}{\x}
      \storefinalface{\x}{\x}
      \storefacecolor{\x}{\defaultfacecolor}
      \storeminslice{\x}{0}
   }
   \pgfmathtruncatemacro\tmpnextfaceid{\maxstrandidx + 1}
   \setcounter{nextfaceid}{\tmpnextfaceid}
   \setcounter{maxfaceseen}{\tmpnextfaceid}

   \def\slicedistance{\defaultslicedistance}
   \def\multislice{0}
   \def\multislicelength{0}
   \def\multislicebeforeoffset{0}
   \def\multisliceafteroffset{0}
   \def\multislicevertexid{1}

   \def\braidmode{0}
   \def\braiddistance{\defaultbraiddistance}

   \setcounter{nextedgeid}{0}

   \setcounter{nextdeferrednodeid}{0}
}
\newcommand\drawinitialstrands[1]{
   \pgfmathtruncatemacro\maxstrandidx{#1 - 1}
   \pgfmathsetmacro\centeringoffset{-0.5 * #1}
   \pgfmathtruncatemacro\diagramlevel{0}

   \foreach \x in {0,...,\maxstrandidx}{
      \storeface{\x}{\x}
   }

  \begin{scope}[xshift=\centeringoffset cm]
   \ifthenelse{\maxstrandidx = 0}{}{
   \foreach \x in {1,...,\maxstrandidx}{

     \node at ($(\x, .7)$) (input_\x) {};

     \storeedgeid{\x}{\thenextedgeid}
     \storeedgepath{\thenextedgeid}{($(\x + \centeringoffset, .5)$) -- ($(\x+\centeringoffset,0)$)};
     \storeedgecolor{\thenextedgeid}{black}
     \stepcounter{nextedgeid}
   }
   }
   \end{scope}

    \foreach \x in {0,...,\maxstrandidx} {
        \node at ($(\x+\centeringoffset + .5, \verticalpos)$) (spot_\diagramlevel_\x) {};
    }
    \foreach \x in {1,...,\maxstrandidx} {
      \node at ($(\x+\centeringoffset, \verticalpos)$) (place_\diagramlevel_\x) {};
    }

   \pgfmathtruncatemacro\tmpnextfaceid{\maxstrandidx + 1}
   \setcounter{nextfaceid}{\tmpnextfaceid}

   \pgfmathtruncatemacro\latestfaceid{\themaxfaceseen - 1}
}
\newcommand\finishdiagram[0]{
  \wlog{finishing diagram with \maxstrandidx wires}
   \ifthenelse{\maxstrandidx = 0}{}{
     \foreach \x in {1,...,\maxstrandidx}{
       \node at ($(\x + \centeringoffset, \verticalpos - .7)$) (output_\x) {};
       \expandedgepath{\x}{ -- +(0,-.5)};
   }
   }

   \pgfmathtruncatemacro\maxedgeid{\thenextedgeid - 1}
   \ifthenelse{\thenextedgeid = 0}{}{
     \foreach \eid in {0,...,\maxedgeid}{
     
       \draw[edge,\getedgecolor{\eid},dynamic rounded corners final=\defaultroundedness] \getedgepath{\eid} ;

   }
   }

   \pgfmathtruncatemacro\maxdeferredvertex{\thenextdeferrednodeid - 1}
   \ifthenelse{\thenextdeferrednodeid > 0}{
     \foreach \x in {0,...,\maxdeferredvertex} {
       \node[\getdeferrednodestyle{\x}] at (v\getdeferrednodeid{\x}) {};
     }
   }{}
}
\newcommand\updatefinalfaceids[3]{
    \def\wb{#1}
    \def\inputs{#2}
    \def\outputs{#3}
    \pgfmathtruncatemacro\nextdiaglevel{\diagramlevel+1}
    \pgfmathsetmacro\nextverticalpos{\verticalpos - \slicedistance}
    \pgfmathtruncatemacro\horizoffset{\outputs - \inputs}
    \pgfmathtruncatemacro\bottomrightcorner{\wb + \outputs + 1}
    \pgfmathtruncatemacro\innertoprightcorner{\wb + \inputs}

    \ifthenelse{\outputs=0 \AND \inputs>0}{
      \mergefaces{\getface{\wb}}{\getface{\innertoprightcorner}}
    }{}

   \ifthenelse{\horizoffset > 0}{
       \foreach \x in {\maxstrandidx,...,\innertoprightcorner}{
          \pgfmathtruncatemacro\offsetidx{\x + \horizoffset}
          \storeface{\offsetidx}{\getface{\x}}
       }
   }{
       \foreach \x in {\innertoprightcorner,...,\maxstrandidx}{
          \pgfmathtruncatemacro\offsetidx{\x + \horizoffset}
          \storeface{\offsetidx}{\getface{\x}}
       }
   }

   \pgfmathtruncatemacro\firstnewface{\wb + 1}
   \pgfmathtruncatemacro\finalnewface{\wb + \outputs - 1}
   \ifthenelse{\outputs > 1}{
     \foreach \x in {\firstnewface,...,\finalnewface}{
       \storeface{\x}{\thenextfaceid}

       \storeminslice{\thenextfaceid}{\nextdiaglevel}

       \ifthenelse{\themaxfaceseen > \thenextfaceid}{}{
         \storefinalface{\thenextfaceid}{\thenextfaceid}
         \storefacecolor{\thenextfaceid}{eface}
         \stepcounter{maxfaceseen}
       }
       \stepcounter{nextfaceid}

     }
   }{}
   \pgfmathtruncatemacro\latestfaceid{\themaxfaceseen - 1}
   
}
\newcommand\scanslice[3]{

    \pgfmathtruncatemacro\maxstrandidx{\maxstrandidx + \horizoffset}
    \pgfmathtruncatemacro\diagramlevel{\diagramlevel+1}
    \pgfmathsetmacro\verticalpos{\verticalpos - \slicedistance}

}
\newcommand\diagslicenovertex[3]{
    \def\wb{#1}
    \def\inputs{#2}
    \def\outputs{#3}
    \wlog{DIAGSLICE #1 #2 #3}
    \pgfmathtruncatemacro\nextdiaglevel{\diagramlevel+1}
    \pgfmathsetmacro\nextverticalpos{\verticalpos - \slicedistance}
    \pgfmathtruncatemacro\horizoffset{\outputs - \inputs}
    \ifthenelse{\multislice = 0}{
      \pgfmathsetmacro\nextoffset{\centeringoffset - 0.5*\horizoffset}
    }{
      \pgfmathsetmacro\nextoffset{-0.5 - 0.5*\multislicelength}
    }
    \pgfmathtruncatemacro\wbtop{\multislicebeforeoffset + \wb}
    \pgfmathtruncatemacro\wbbot{\multisliceafteroffset + \wb}
    \pgfmathtruncatemacro\toprightcorner{\wbtop + \inputs + 1}
    \pgfmathtruncatemacro\bottomrightcorner{\wbbot + \outputs + 1}
    \ifthenelse{\multislice = 0}{
      \pgfmathsetmacro\vertexpos{0.5* (\centeringoffset + \wb + 0.5 + 0.5*\inputs) + 0.5*(\nextoffset + \wb + 0.5 + 0.5*\outputs)}
    }{
      \ifthenelse{\inputs=0}{
         \pgfmathsetmacro\vertexpos{\nextoffset + \wbbot + 0.5 + 0.5*\outputs}
      }{
        \ifthenelse{\outputs=0}{
          \pgfmathsetmacro\vertexpos{\centeringoffset + \wbtop + 0.5 + 0.5*\inputs}
        }{
          \pgfmathsetmacro\vertexpos{0.5* (\centeringoffset + \wbtop + 0.5 + 0.5*\inputs) + 0.5*(\nextoffset + \wbbot + 0.5 + 0.5*\outputs)}
        }
      }
    }

    \wlog{Positioning vertex at vertical pos: between \verticalpos  and \nextverticalpos}
    \wlog{and horizontal pos: \vertexpos}
    \node (curvertex) at ($(\vertexpos, 0.5*\verticalpos + 0.5*\nextverticalpos)$) {};
    \node at (curvertex) (v\diagramlevel) {};
    \node at (curvertex) (v\diagramlevel-\multislicevertexid) {};

    \ifthenelse{\wb=0}{}{
       \pgfmathtruncatemacro\msbop{\multisliceafteroffset + 1}
        \foreach \x in {\msbop,...,\wbbot} {
          \expandedgepath{\x}{-- ($(\x + \nextoffset,\nextverticalpos)$)};
        }
    }
       
    \ifthenelse{\inputs=0}{}{
      \foreach \x in {1,...,\inputs} {
         \pgfmathtruncatemacro\edgepos{\wbbot+\x}

         \ifthenelse{\capscups=0 \OR \( \not \( \inputs=2 \) \OR \not \( \outputs=0 \) \)}{
         }{
           \expandedgepath{\edgepos}{-- ($(\x + \wbtop + \centeringoffset, .5*\verticalpos + .5*\nextverticalpos)$)};
         }

         \ifthenelse{\braidmode=1 \AND \x=2}{
           \expandedgepath{\edgepos}{-- ($(\vertexpos + \braiddistance, .5*\verticalpos + .5*\nextverticalpos + \braiddistance)$)};
         }{
           \ifthenelse{\braidmode=2 \AND \x=1}{
             \expandedgepath{\edgepos}{-- ($(\vertexpos - \braiddistance, .5*\verticalpos + .5*\nextverticalpos + \braiddistance)$)};
           }{
             \expandedgepath{\edgepos}{-- ($(\vertexpos, .5*\verticalpos + .5*\nextverticalpos)$)};
             
             \ifthenelse{\braidmode=0}{}{
               \xdef\overbraidedgeid{\getedgeid{\edgepos}}
             }
           }
         }
       }
    }

    \wlog{shifting edge ids?}
    \ifthenelse{\toprightcorner > \maxstrandidx}{}{
      \wlog{yes there are remaining wires}
      \ifthenelse{\inputs > \outputs}{
           \pgfmathtruncatemacro\maxstrandidxoffset{\bottomrightcorner + \maxstrandidx - \toprightcorner}
           \wlog{previously shifting edges from \toprightcorner  to \maxstrandidx by \horizoffset}
           \wlog{now from \bottomrightcorner  to \maxstrandidxoffset by \horizoffset}
           \foreach \x in {\bottomrightcorner,...,\maxstrandidxoffset}{
              \pgfmathtruncatemacro\edgepos{\x - \horizoffset}
              \storeedgeid{\x}{\getedgeid{\edgepos}}
           }
         }{}
      \ifthenelse{\outputs > \inputs}{
        \wlog{shifting edges from \maxstrandidx  down to \toprightcorner by \horizoffset}
           \foreach \x in {\maxstrandidx,...,\toprightcorner}{
              \pgfmathtruncatemacro\edgepos{\x + \horizoffset}
              \storeedgeid{\edgepos}{\getedgeid{\x}}
           }           
         }{}
    }

    \ifthenelse{\outputs=0}{}{
      \foreach \x in {1,...,\outputs} {
        \pgfmathtruncatemacro\edgepos{\wbbot + \x}
        \storeedgeid{\edgepos}{\thenextedgeid}

        \ifthenelse{\braidmode=1 \AND \x=1}{
          \storeedgepath{\thenextedgeid}{($(\vertexpos - \braiddistance, .5*\verticalpos + .5*\nextverticalpos - \braiddistance)$)};
        }{
          \ifthenelse{\braidmode=2 \AND \x=2}{
            \storeedgepath{\thenextedgeid}{($(\vertexpos + \braiddistance, .5*\verticalpos + .5*\nextverticalpos - \braiddistance)$)};
          }{
            \ifthenelse{\braidmode=0}{
              \storeedgepath{\thenextedgeid}{($(\vertexpos, .5*\verticalpos + .5*\nextverticalpos)$)};
            }{
              \storeedgeid{\edgepos}{\overbraidedgeid}
            }
          }
        }

        \ifthenelse{\capscups=0 \OR \not \( \( \outputs=2 \) \OR \not \( \inputs=0 \) \)}{

        }{
           \expandedgepath{\edgepos}{-- ($(\edgepos + \nextoffset, .5*\verticalpos + .5*\nextverticalpos)$)};
        }
        
        \expandedgepath{\edgepos}{ -- ($(\edgepos + \nextoffset,\nextverticalpos)$)};

        \ifthenelse{\( \braidmode=0 \) \OR \( \x=\braidmode \) }{
           \storeedgecolor{\thenextedgeid}{black}
           \stepcounter{nextedgeid}
        }{}
       }
    }

    \ifthenelse{\multislice=0}{
      \ifthenelse{\toprightcorner > \maxstrandidx}{}{
        \pgfmathtruncatemacro\nextmaxstrandidx{\maxstrandidx + \horizoffset}
        \foreach \x in {\bottomrightcorner,...,\nextmaxstrandidx} {
          \expandedgepath{\x}{-- ($(\x + \nextoffset, \nextverticalpos)$)};
        }
      }
    }{
      \pgfmathtruncatemacro\multislicebeforeoffset{\wbtop + \inputs}
      \pgfmathtruncatemacro\multisliceafteroffset{\wbbot + \outputs}
      \pgfmathtruncatemacro\multislicevertexid{\multislicevertexid + 1}
    }

    \ifthenelse{\drawfaces=0}{}{
         \ifthenelse{\inputs=0 \AND \outputs=0}{
            \pgfmathtruncatemacro\innertopleftcorner{\wb - 1}
            \pgfmathtruncatemacro\innertoprightcorner{\toprightcorner}

            \draw[eface] ($(\wb + \centeringoffset + .5, \verticalpos)$) edge[bend left=40] ($(\wb + \nextoffset + .5, \nextverticalpos)$);
            \draw[eface] ($(\wb + \centeringoffset + .5, \verticalpos)$) edge[bend right=40] ($(\wb + \nextoffset + .5, \nextverticalpos)$);
        }{
            \pgfmathtruncatemacro\innertopleftcorner{\wb}
            \pgfmathtruncatemacro\innertoprightcorner{\toprightcorner - 1}
        }

        \foreach \x in {0,...,\innertopleftcorner} {
                 \draw[eface,\getfacecolor{\getfinalface{\getface{\x}}}] ($(\x + \centeringoffset + .5, \verticalpos)$) -- ($(\x + \nextoffset + .5, \nextverticalpos)$);
        }
         
         \foreach \x in {\innertoprightcorner,...,\maxstrandidx} {
            \draw[eface,\getfacecolor{\getfinalface{\getface{\x}}}] ($(\x + \centeringoffset + .5, \verticalpos)$) -- ($(\x + \horizoffset + \nextoffset + .5, \nextverticalpos)$);
         }

    }


    \ifthenelse{\multislice=0}{
      \pgfmathtruncatemacro\maxstrandidx{\maxstrandidx + \horizoffset}
      \pgfmathtruncatemacro\diagramlevel{\diagramlevel+1}
      \pgfmathsetmacro\verticalpos{\verticalpos - \slicedistance}

      \pgfmathsetmacro\centeringoffset{\nextoffset}
    }{}

    \foreach \x in {0,...,\maxstrandidx} {
        \node at ($(\x+\centeringoffset + .5, \verticalpos)$) (spot_\diagramlevel_\x) {};
    }
    \foreach \x in {1,...,\maxstrandidx} {
      \node at ($(\x+\centeringoffset, \verticalpos)$) (place_\diagramlevel_\x) {};
    }
    \ifthenelse{\drawfaces=0}{}{
       \placesandspots
    }
}
\newcommand\finishcurrentslice[0]{
    \ifthenelse{\toprightcorner > \maxstrandidx}{}{
      \pgfmathtruncatemacro\nextmaxstrandidx{\maxstrandidx + \horizoffset}
        \foreach \x in {\bottomrightcorner,...,\nextmaxstrandidx} {
          \expandedgepath{\x}{-- ($(\x + \nextoffset, \nextverticalpos)$)};
        }
    }
        
    \pgfmathtruncatemacro\maxstrandidx{\multislicelength}
    \pgfmathtruncatemacro\diagramlevel{\diagramlevel+1}
    \pgfmathsetmacro\verticalpos{\verticalpos - \slicedistance}
    \def\slicedistance{1}

    \pgfmathsetmacro\centeringoffset{\nextoffset}
}
\newcommand\diagslice[3]{
  \diagslicenovertex{#1}{#2}{#3};
  \node[bnode] at (curvertex) {};
}
\newcommand\pbraidslice[1]{
  \def\braidmode{1}
  \diagslicenovertex{#1}{2}{2};
  \def\braidmode{0}
}
\newcommand\nbraidslice[1]{
  \def\braidmode{2}
  \diagslicenovertex{#1}{2}{2};
  \def\braidmode{0}
}
\newcommand\cupslice[1]{
  \enablecapscups
  \diagslicenovertex{#1}{2}{0}
  \disablecapscups
}
\newcommand\capslice[1]{
  \enablecapscups
  \diagslicenovertex{#1}{0}{2}
  \disablecapscups
}
\newcommand\cupright[1]{
  \cupslice{#1}
  \node[xshift=2pt,rotate=-90,baseline=0] at (curvertex) {\tikz\draw[very thick,->] (0,0);};
}
\newcommand\cupleft[1]{
  \cupslice{#1}
  \node[xshift=-2pt,rotate=90,baseline=0] at (curvertex) {\tikz\draw[very thick,->] (0,0);};
}
\newcommand\capright[1]{
  \capslice{#1}
  \node[xshift=2pt,rotate=-90,baseline=0] at (curvertex) {\tikz\draw[very thick,->] (0,0);};
}
\newcommand\capleft[1]{
  \capslice{#1}
  \node[xshift=-2pt,rotate=90,baseline=0] at (curvertex) {\tikz\draw[very thick,->] (0,0);};
}
\newcommand\placesandspots[0]{
    \foreach \x in {0,...,\maxstrandidx} {
        \node[yspot,\getfacecolor{\getfinalface{\getface{\x}}}] at (spot_\diagramlevel_\x) {};
        

       \ifthenelse{\writefaceids=1 \AND \diagramlevel=\getminslice{\getface{\x}} \AND \getfinalface{\getface{\x}}=\getface{\x}}{
            \node[node distance=.25cm,above of=spot_\diagramlevel_\x] {\small \getface{\x}};
        }{}
    }

    \ifthenelse{\maxstrandidx=0 \OR \drawplaces=0}{}{
       \foreach \x in {1,...,\maxstrandidx} {
          \node[bplace] at (place_\diagramlevel_\x) {};
       }
    }
}
\newenvironment{stringdiagram}[1]{
  \pgfmathtruncatemacro\nbinputsp{#1 + 1}
  \startdiagram{\nbinputsp}
  \ifthenelse{#1 > 0}{
    \drawinitialstrands{\nbinputsp}
  }{}
}{
  \finishdiagram
}
\newcommand\startslice[1]{
  \def\multislice{1}
  \def\multislicelength{#1}
  \def\multislicebeforeoffset{0}
  \def\multisliceafteroffset{0}
  \def\multislicevertexid{1}
}
\newcommand\finishslice[0]{
  \finishcurrentslice
  \def\multislice{0}
  \def\multislicebeforeoffset{0}
  \def\multisliceafteroffset{0}
  \def\multislicevertexid{1}
}
\newcommand{\cC}[0]{%
\mathcal{C}}
\newcommand{\citep}[1]{\cite{#1}}
\tikzstyle{edge}=[thick]
\tikzstyle{morphismlabel}=[node distance=0.3cm]
\DeclareMathOperator{\dom}{dom}
\DeclareMathOperator{\cod}{cod}
\def\ROTang{\mathbb{ROT}\mathbbm{ang}}
\def\CC{\mathbb{CC}}
\DeclareMathOperator{\ccc}{ccc}
\def\defaultroundedness{15pt}
\newcommand{\scapl}{\begin{tikzpicture}[baseline=-.45cm,scale=0.25] %
      \begin{stringdiagram}{0} %
        \capleft{0} %
      \end{stringdiagram} %
\end{tikzpicture}}
\newcommand{\scapr}{\begin{tikzpicture}[baseline=-.45cm,scale=0.25] %
      \begin{stringdiagram}{0} %
        \capright{0} %
      \end{stringdiagram} %
\end{tikzpicture}}
\newcommand{\scupl}{\begin{tikzpicture}[baseline=-.2cm,scale=0.25] %
      \begin{stringdiagram}{2} %
        \cupleft{0} %
      \end{stringdiagram} %
\end{tikzpicture}}
\newcommand{\scupr}{\begin{tikzpicture}[baseline=-.2cm,scale=0.25] %
      \begin{stringdiagram}{2} %
        \cupright{0} %
      \end{stringdiagram} %
\end{tikzpicture}}
\def\defaultbraiddistance{0.17}
\title{The Word Problem for Braided Monoidal Categories is Unknot-Hard}
\date{\today}
\author{Antonin Delpeuch
  \institute{Department of Computer Science, University of Oxford}
  \email{antonin.delpeuch@cs.ox.ac.uk}
  \and
  Jamie Vicary
  \institute{Computer Laboratory, University of Cambridge}
  \email{jamie.vicary@cl.cam.ac.uk}
}
\begin{document}

\maketitle

\begin{abstract}
  We show that the word problem for braided monoidal categories is at least
  as hard as the unknotting problem. As a corollary, so is the word problem for Gray categories. We conjecture that the word problem for Gray categories is decidable.
\end{abstract}

\section*{Introduction}

The \emph{word problem} for an algebraic structure is the decision problem which consists in determining whether two expressions denote the same elements of such a structure. Depending on the equational theory of the structure, this problem can be very simple or extremely difficult, and studying it from the lens of complexity or computability theory has proved insightful in many cases.

This work is the third episode of a series of articles studying the word problem for various sorts of categories, after monoidal categories~\citep{delpeuch2018normalization-1} and double categories~\citep{delpeuch2020word}. We turn here to braided monoidal categories. Unlike the previous episodes, we do not propose an algorithm deciding equality, but instead show that this word problem seems to be a difficult one. More precisely we show that it is at least as hard as the unknotting problem.

The unknotting problem consists in determining whether a knot can be untied and was first formulated by Dehn in 1910~\citep{Dehn1910}. The decidability of this problem remained open until Haken gave the first algorithm for it in 1961~\citep{haken1961theorie}. As of today, no polynomial time algorithm is known for it.

One reason why we are interested in braided monoidal categories is that they are a particularly natural sort of categories, in the sense that they arise as doubly degenerate Gray categories~\citep{gurski2011periodic}. Studying the word problem for them is therefore a first step towards understanding the word problem for weak higher categories, for which little is known to date.

This article starts off with a section giving some background on the various flavours of monoidal categories we will use, as well as a quick introduction to the unknotting problem. Then, we give a first reduction between the unknotting problem to the braided pivotal word problem, as a way of introducing tools which will be needed for the last section, where our main result is proved.

\subsection*{Acknowledgements}

The authors wish to thank the participants of the 2019 Postgraduate Conference in Category Theory and its Applications in Leicester and Makoto Ozawa for their feedback and help on this project. The first author is supported by an EPSRC scholarship.

\section{Background}

We assume familiarity with monoidal categories and their string diagrams~\citep{selinger2010survey}.
  
\subsection{Braided monoidal categories}

Braided monoidal categories were introduced by~\cite{joyal1986braided,joyal1993braided}. In this work,
we study the word problem for this algebraic structure. This is the decision problem where given
two expressions of morphisms in a free braided monoidal category, we need to determine whether or not they
represent the same morphism.

In what follows, all monoidal categories are strict. There exists weak
versions of the following definitions, and coherence theorems show
their equivalence with the strict definitions that we use here. See for instance
Theorem 4 in~\cite{joyal1986braided} for the case of braided monoidal categories.

\begin{definition}
  A \textbf{braided monoidal category} $\cC$ is a monoidal category $(\cC, \otimes, I)$
  equipped with a natural isomorphism $\sigma_{A,B} : A \otimes B \to B \otimes A$, satisfying
  the hexagon identities:
  
  $$\sigma_{A,B \otimes C} = (1_B \otimes \sigma_{A,C}) \circ (\sigma_{A,B} \otimes 1_C)$$
   $$\sigma_{A \otimes B, C} = (\sigma_{A,C} \otimes 1_B) \circ (1_A \otimes \sigma_{B,C})$$
\end{definition}

We use string diagrams for monoidal categories to represent morphisms
in braided monoidal categories. Figure~\ref{fig:braid} shows the
representation of the braid morphism and its
inverse. Figure~\ref{fig:hexagon} shows the representation of the
hexagon identities with this convention.

\begin{figure}
  \centering
  \begin{subfigure}{0.4\textwidth}
  \centering
  \begin{tikzpicture}[scale=0.5]
    \begin{stringdiagram}{2}
      \node at (-.5,1) {$A$};
      \node at (.5,1) {$B$};
      \node at (-.5,-2) {$B$};
      \node at (.5,-2) {$A$};
      \pbraidslice{0}
    \end{stringdiagram}
  \end{tikzpicture}
  \caption{String diagram for $\sigma_{A,B}$}
\end{subfigure}
\begin{subfigure}{0.4\textwidth}
  \centering
  \begin{tikzpicture}[scale=0.5]
    \begin{stringdiagram}{2}
      \node at (-.5,1) {$B$};
      \node at (.5,1) {$A$};
      \node at (-.5,-2) {$A$};
      \node at (.5,-2) {$B$};
      \nbraidslice{0}
    \end{stringdiagram}
  \end{tikzpicture}
  \caption{String diagram for $\sigma_{A,B}^{-1}$}
\end{subfigure}

  \caption{Representation of braid morphisms as string diagrams}
  \label{fig:braid}
\end{figure}
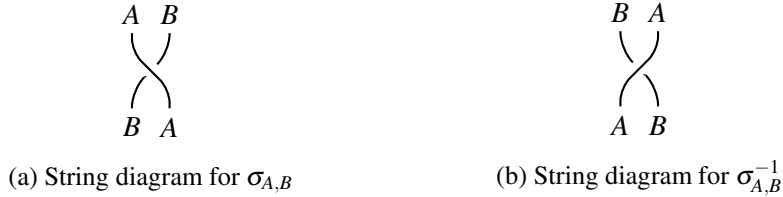

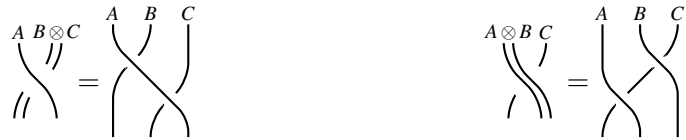
\begin{figure}
  \centering
  \begin{subfigure}{0.4\textwidth}
  \centering
  \begin{tikzpicture}[scale=.5]
    \begin{stringdiagram}{2}
      \diagslicenovertex{0}{2}{2}
      \storeedgecolor{0}{white}
      \storeedgecolor{3}{white}
      \storeedgecolor{1}{white}
      \storeedgecolor{2}{white}
    \end{stringdiagram}
    \foreach \xs in {0.125cm,-0.125cm} {
      \begin{scope}[xshift=\xs]
        \draw[edge,dynamic rounded corners final=\defaultroundedness] \getedgepath{1};
        \draw[edge,dynamic rounded corners final=\defaultroundedness] \getedgepath{2};
      \end{scope}
    }
    \node[fill=white,circle,inner sep=5pt] at (v0) {};
    \draw[edge,dynamic rounded corners final=\defaultroundedness] \getedgepath{0};
    \draw[edge,dynamic rounded corners final=\defaultroundedness] \getedgepath{3};
    \node[scale=.7] at (-.5,.8) {$A$};
    \node[scale=.7] at (.5,.8) {$B \otimes C$};

    \node at (1.35,-.6) {$=$};
    \begin{scope}[xshift=3cm,yshift=.5cm]
      \begin{stringdiagram}{3}
        \node[scale=.7] at (-1,.8) {$A$};
        \node[scale=.7] at (0,.8) {$B$};
        \node[scale=.7] at (1,.8) {$C$};
        \pbraidslice{0}
        \pbraidslice{1}
      \end{stringdiagram}
    \end{scope}
  \end{tikzpicture}
\end{subfigure}
\begin{subfigure}{0.4\textwidth}
  \centering
  \begin{tikzpicture}[scale=.5]
    \begin{stringdiagram}{2}
      \diagslicenovertex{0}{2}{2}
      \storeedgecolor{0}{white}
      \storeedgecolor{3}{white}
    \end{stringdiagram}
    \node[fill=white,circle,inner sep=5pt] at (v0) {};
    \foreach \xs in {0.125cm,-0.125cm} {
      \begin{scope}[xshift=\xs]
        \draw[edge,dynamic rounded corners final=\defaultroundedness] \getedgepath{0};
        \draw[edge,dynamic rounded corners final=\defaultroundedness] \getedgepath{3};
      \end{scope}
    }
    
    \node[scale=.7] at (-.5,.8) {$A \otimes B$};
    \node[scale=.7] at (.5,.8) {$C$};
    \node at (1.35,-.6) {$=$};
    \begin{scope}[xshift=3cm,yshift=.5cm]
      \begin{stringdiagram}{3}
        \node[scale=.7] at (-1,.8) {$A$};
        \node[scale=.7] at (0,.8) {$B$};
        \node[scale=.7] at (1,.8) {$C$};
        \pbraidslice{1}
        \pbraidslice{0}
      \end{stringdiagram}
    \end{scope}
  \end{tikzpicture}
\end{subfigure}

  \caption{The hexagon identities represented as string diagrams}
  \label{fig:hexagon}
\end{figure}

\begin{figure}
  \centering
  \begin{subfigure}{0.4\textwidth}
  \centering
    \begin{tikzpicture}[scale=.3]
    \begin{stringdiagram}{2}
      \pbraidslice{0}
      \nbraidslice{0}
    \end{stringdiagram}
    \node at (1.5,-1) {$=$};
    \begin{scope}[xshift=3cm]
      \draw[edge] (-.5,.5) -- (-.5,-2.5);
      \draw[edge] (.5,.5) -- (.5,-2.5);
      \node at (1.5,-1) {$=$};
    \end{scope}
    \begin{scope}[xshift=6cm]
      \begin{stringdiagram}{2}
        \nbraidslice{0}
        \pbraidslice{0}
      \end{stringdiagram}
    \end{scope}
  \end{tikzpicture}
  \caption{Reidemeister 2 move ($\sigma$ is an iso)}
  \label{fig:reidemeister-2}
\end{subfigure}
\begin{subfigure}{0.4\textwidth}
  \centering
  \begin{tikzpicture}[scale=.3]
    \begin{stringdiagram}{3}
      \diagslice{0}{2}{2}
      \pbraidslice{1}
      \pbraidslice{0}
      \begin{scope}[every node/.style={node distance=.75cm}]
        \node[scale=.6,above of=v0] {$\dots$};
        \node[scale=.6,below of=v0] {$\dots$};
      \end{scope}
      \node[scale=.6] at (.5,-3) {$\dots$};
    \end{stringdiagram}
    \node at (2,-1.5) {$=$};
    \begin{scope}[xshift=4cm]
      \begin{stringdiagram}{3}
        \pbraidslice{1}
        \pbraidslice{0}
        \diagslice{1}{2}{2}
        \begin{scope}[every node/.style={node distance=.75cm}]
          \node[scale=.6,above of=v2] {$\dots$};
          \node[scale=.6,below of=v2] {$\dots$};
        \end{scope}
        \node[scale=.6] at (-.5,0) {$\dots$};
      \end{stringdiagram}
    \end{scope}
  \end{tikzpicture}
  \caption{Pull-through move (naturality of $\sigma$)}
  \label{fig:pull-through}
\end{subfigure}

  \caption{Equalities satisfied by braid morphisms}
  \label{fig:braided-naturality}
\end{figure}
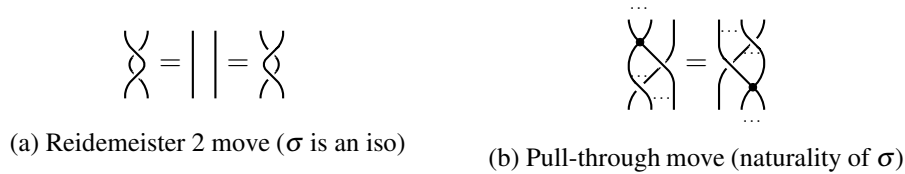

The soundness and completeness theorem of string diagrams for monoidal categories can
be extended to the case of braided monoidal
categories~\cite{joyal1991geometry}.\footnote{Soundness and completeness theorems for graphical
languages are sometimes called \emph{coherence theorems} but we avoid this terminology because of the confusion it creates with other ``coherence'' theorems.} This requires adapting the notion
of string diagram, which is now three-dimensional, and the
corresponding class of isotopies. We state the soundness and completeness theorem as
formulated by \cite{selinger2010survey}. To tell it apart from other notions of isotopy,
we use the name \emph{regular isotopy} for the class of isotopies used in this theorem.

\begin{theorem}
  \label{thm:diag-coherence-braided}
  A well-formed equation between morphisms in the language of braided
  monoidal categories follows from the axioms of braided monoidal
  categories if and only if it holds in the graphical language up to
  regular isotopy in 3 dimensions.
\end{theorem}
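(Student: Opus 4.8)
The plan is to establish the two implications separately, since the ``if and only if'' splits into a \emph{soundness} direction (provable equations hold in the graphical language) and a \emph{completeness} direction (diagrams equal up to regular isotopy are provably equal). Throughout I would fix the data: a signature of generating objects and morphisms, the free braided monoidal category $\cC$ it generates, and a topological model of diagrams as \emph{progressive} labelled tangles in $\mathbb{R}^2 \times [0,1]$ --- strands monotone in the height coordinate, carrying object labels, with interior nodes carrying morphism labels --- projected generically so that each transverse double point records an application of $\sigma$ or $\sigma^{-1}$. Regular isotopy is then the equivalence relation generated by the Reidemeister $2$ and $3$ moves (Figure~\ref{fig:reidemeister-2} shows the former) and planar isotopy, but \emph{not} the Reidemeister $1$ move; this last exclusion is exactly what matches braided rather than ribbon categories, which have no twist.

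For soundness I would check that every generating axiom, drawn on both sides, yields diagrams related by a regular isotopy, and then conclude by induction on derivations using that regular isotopy is a congruence for $\otimes$ and $\circ$. The monoidal axioms (associativity, units, bifunctoriality of $\otimes$) give planar isotopies in the usual non-braided calculus. Invertibility of $\sigma$ gives the Reidemeister $2$ move of Figure~\ref{fig:reidemeister-2}; naturality of $\sigma$ gives the pull-through move of Figure~\ref{fig:pull-through}, namely sliding a morphism node freely across a crossing; and the two hexagon identities of Figure~\ref{fig:hexagon} express how a crossing over a tensor product splits into two elementary crossings. A short diagrammatic computation then derives the Yang--Baxter / braid relation from the hexagons and naturality, and this relation is precisely the Reidemeister $3$ move.

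The completeness direction carries the real weight, and I would attack it by a generic-position argument. First I would perturb a given diagram, within its regular-isotopy class, so that the height function $h : \mathbb{R}^2 \times [0,1] \to [0,1]$ is a submersion on every strand and the crossings and morphism nodes occur at pairwise distinct heights; slicing at regular levels then reads the diagram top to bottom as a composite of elementary slices, each an identity-padded generator, hence as a well-defined morphism of $\cC$. The crux is independence of choices: given two generic diagrams that are regularly isotopic, I would realise the isotopy by a one-parameter family and make the accompanying family of height functions generic too. Singularity theory (Cerf theory / stratified Morse theory) then guarantees that the combinatorial slice type changes only at finitely many parameter values, each a codimension-one event in which two critical heights cross. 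Cataloguing these events yields exactly the interchange law (distant generators swapping height), the Reidemeister $2$ and $3$ moves, and the pull-through move --- and, crucially, no Reidemeister $1$ event, since the framing is preserved --- so by the soundness analysis each elementary move is already an axiom-provable equality and the two readouts denote the same morphism.

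The main obstacle is precisely this classification: proving that the exceptional parameters are finite and that \emph{every} one falls into the expected catalogue, with no stray degeneration escaping it. This demands a careful transversality argument to put the whole one-parameter family in general position, together with a codimension count ruling out unexpected collisions. That is where I would expect the bulk of the effort to go; the remaining bookkeeping --- translating each catalogued move back into a concrete axiom instance, and dealing with the unit object and empty slices --- is routine by comparison.
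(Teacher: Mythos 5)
The paper does not prove this theorem at all: it is imported verbatim from the literature, credited to Joyal and Street's geometry of tensor calculus as formulated in Selinger's survey, so there is no in-paper proof to measure you against. Judged on its own terms, your outline reproduces the standard architecture of the published proof --- soundness by verifying each axiom is a regular isotopy and inducting over derivations, completeness by putting a diagram and then a one-parameter family of diagrams in generic position and classifying the codimension-one events at which the slice decomposition changes. That is the right plan. However, you explicitly park the entire load-bearing step (the transversality argument and the proof that the catalogue of exceptional events is exhaustive) as ``where the bulk of the effort would go,'' so as a self-contained proof this is a gap rather than an argument: everything that distinguishes this theorem from a routine soundness check lives precisely in that classification, and nothing in your text rules out a stray degeneration. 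Two smaller remarks. First, in the free braided monoidal category on a monoidal signature the diagrams are progressive (strands strictly monotone in height), so a Reidemeister~1 kink cannot even be drawn --- your aside about excluding it to avoid the ribbon twist is really about the pivotal setting of $\ROTang$ later in the paper, not about this theorem. Second, if you were writing this for the paper, the correct move is the one the authors make: cite the result rather than reprove it, since a full proof of the Joyal--Street theorem is a substantial piece of work in its own right.
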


\begin{figure}
  \centering
\begin{tikzpicture}[scale=.3]
    \begin{stringdiagram}{4}
    \pbraidslice{1}
    \diagslice{0}{2}{1}
    \nbraidslice{0}
    \pbraidslice{1}
    \end{stringdiagram}
    \node at (3,-2) {\Large $=$};
    \begin{scope}[xshift=6cm,yshift=-.5cm]
    \begin{stringdiagram}{4}
        \nbraidslice{0}
        \diagslice{1}{2}{1}
        \pbraidslice{1}
    \end{stringdiagram}
    \end{scope}
\end{tikzpicture}
\caption{Two regularly isotopic diagrams in a braided monoidal category}
\label{fig:isotopic-braided-diagrams}
\end{figure}
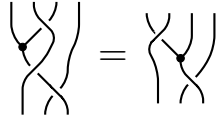

The combinatorics of string braidings have been studied extensively,
but more often from the perspective of group theory than category theory.
The braid group was introduced by~\cite{artin1947theory}.

\begin{definition}
  The braid group on $n$ strands $B_n$, is the free group generated
  by generators $\sigma_1, \dots, \sigma_{n-1}$ with equations
  \begin{align}
    \sigma_k \sigma_{k+1} \sigma_k = \sigma_{k+1} \sigma_k \sigma_{k+1} & & \text{for $1 \leq k < n-1$} \label{eqn:reidemeister-3} \\
    \sigma_i \sigma_j = \sigma_j \sigma_i & & \text{for $j - i > 1$} \label{eqn:braid-group-exchange}
  \end{align}
  for $1 \leq k < n-1$.
\end{definition}

\begin{figure}
  \centering
  \begin{subfigure}{0.4\textwidth}
  \centering
  \begin{tikzpicture}[scale=0.5]
    \begin{stringdiagram}{6}
      \pbraidslice{2}
    \end{stringdiagram}
    \node[scale=0.6] at (-2,-.5) {$\dots$};
    \node[scale=0.6] at (2,-.5) {$\dots$};
    \draw [decorate,decoration={brace,amplitude=5pt,mirror,raise=4ex}] (-2.6,-1) -- (-1.4,-1)
    node[midway,yshift=-3em,scale=.6]{$k-1$ strings};
    \draw [decorate,decoration={brace,amplitude=5pt,mirror,raise=4ex}] (1.4,-1) -- (2.6,-1)
    node[midway,yshift=-3em,scale=.6]{$n - k - 1$ strings};
  \end{tikzpicture}
  \caption{Representation of $\sigma_k$}
  \label{fig:representation-sigma-k}
\end{subfigure}
\begin{subfigure}{0.4\textwidth}
  \centering
    \begin{tikzpicture}[scale=.3]
    \begin{stringdiagram}{3}
      \pbraidslice{0}
      \pbraidslice{1}
      \pbraidslice{0}
    \end{stringdiagram}
    \node at (2,-1.5) {$=$};
    \begin{scope}[xshift=4cm]
      \begin{stringdiagram}{3}
        \pbraidslice{1}
        \pbraidslice{0}
        \pbraidslice{1}
      \end{stringdiagram}
    \end{scope}
  \end{tikzpicture}
  \caption{Representation of Equation~\ref{eqn:reidemeister-3}}
\end{subfigure}

  \caption{Graphical representation for the braid group}
  \label{fig:braid-group}
\end{figure}
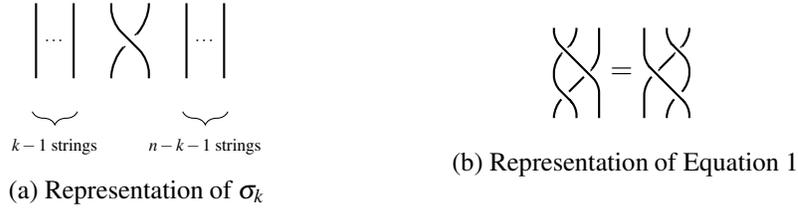

With this formalism, an element of the group represents a braid on $n$
strings. The element $\sigma_k$ represents a positive braiding of the adjacent
strings $k$ and $k+1$, and no change on other strings. Its inverse
$\sigma_k^{-1}$ is the negative braiding on the same
strings. Figure~\ref{fig:braid-group} shows how generators and
equations of the braid group can be represented graphically. Equation~\ref{eqn:reidemeister-3}
is called the Reidemeister type 3 move (or Yang-Baxter equation). One can see that it is a particular case of
the pull-through move of Figure~\ref{fig:pull-through}, where the
morphism being pulled through is a braid itself.
Equation~\ref{eqn:braid-group-exchange} corresponds to the exchange move in monoidal categories.

We can make the connection between this group-theoretic presentation
and braided monoidal categories more precise.

\begin{definition}
  A \textbf{monoidal signature} $(G,M)$ is a set of generating objects
  $G$ and a set of generating morphisms $M$. Each generating morphism
  is associated with two lists of generating objects $\dom(M)$ and $\cod(M)$,
  each being elements of $G^*$.
\end{definition}

Given a monoidal signature, one can generate the free braided monoidal
category on it. By Theorem~\ref{thm:diag-coherence-braided}, this is
the category of braided string diagrams whose vertices and edges are
labelled by generating objects and morphisms respectively. Note that
we are not imposing any additional equation between the generators:
the only equations which hold are those implied by the braided
monoidal structure itself.

\begin{proposition}[{\cite{joyal1993braided}}]
  The free braided monoidal category $\mathcal{B}$ generated by the
  signature $(\{A\}, \emptyset)$ is the braid category,
  i.e.\ $\mathcal{B}(A^n, A^n) = B_n$, the group of braids on $n$
  strands.
\end{proposition}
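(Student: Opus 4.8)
The plan is to exhibit a group isomorphism $\mathcal{B}(A^n,A^n) \cong B_n$ by matching generators and relations on the two sides, using the coherence theorem (Theorem~\ref{thm:diag-coherence-braided}) to pin down the free side geometrically. First I would observe that, since the signature $(\{A\},\emptyset)$ has a single generating object and no generating morphisms, the only vertices that can appear in a string diagram are the crossings coming from $\sigma$ and $\sigma^{-1}$. Each such crossing is monotone in the vertical direction, and monotonicity is preserved under both composition and tensoring, so every diagram is a progressive braid; in particular the number of strands is conserved, whence $\mathcal{B}(A^n,A^m) = \emptyset$ unless $n=m$. By Theorem~\ref{thm:diag-coherence-braided}, $\mathcal{B}(A^n,A^n)$ is therefore the set of geometric $n$-strand braids modulo regular isotopy in three dimensions.

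Next I would show that this hom-set is generated, under categorical composition, by the elementary braidings $\sigma_k = 1_{A^{k-1}} \otimes \sigma_{A,A} \otimes 1_{A^{n-k-1}}$ for $1 \le k \le n-1$ together with their inverses. Any morphism of a free strict monoidal category is a composite of tensor products of generators, and using the interchange law such a composite can be put into a \emph{sliced} normal form in which each horizontal slice contains exactly one crossing, i.e.\ a single $\sigma_k^{\pm 1}$. Since $\sigma$ is a natural \emph{isomorphism}, each $\sigma_k$ is invertible, so the endomorphism monoid $\mathcal{B}(A^n,A^n)$ is in fact a group, generated by the $\sigma_k$.

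I would then define a group homomorphism $\phi \colon B_n \to \mathcal{B}(A^n,A^n)$ sending each Artin generator $\sigma_k$ to the corresponding elementary braiding. To see that $\phi$ is well defined it suffices to check that the two defining families of relations of $B_n$ hold between the elementary braidings. The braid relation (Equation~\ref{eqn:reidemeister-3}) is the Reidemeister~3 move, which is the special case of the pull-through move of Figure~\ref{fig:pull-through} in which the morphism pulled through is itself a braiding; it therefore follows from naturality of $\sigma$. The far-commutativity relation (Equation~\ref{eqn:braid-group-exchange}) is an instance of the interchange law, since for $|i-j|>1$ the braidings $\sigma_i$ and $\sigma_j$ act on disjoint pairs of adjacent strands. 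Surjectivity of $\phi$ is immediate from the previous paragraph.

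The remaining, and hardest, task is injectivity: I must show that the free braided structure imposes \emph{no} relations on the $\sigma_k$ beyond those of $B_n$. Here I would appeal to the classical identification of $B_n$ with the group of geometric $n$-braids up to ambient isotopy (Artin's theorem). By the coherence theorem the free side is the same set of braids up to regular isotopy, so the real content is that, for progressive braid diagrams, regular isotopy coincides with the braid isotopy defining $B_n$. The point is that the only move distinguishing the two notions is the Reidemeister~1 move, and this can never occur for a progressive diagram, in which each strand is strictly monotone and hence cannot form a curl. Consequently the two equivalence relations agree and $\phi$ is a bijection. I expect this last step---verifying that monotonicity excludes exactly the framing-sensitive moves, so that the coherence-theoretic notion of diagram equality matches Artin's---to be the main obstacle, as it is the only place where the genuinely geometric input of Artin's theorem is required.
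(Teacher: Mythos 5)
Your proposal is correct and follows essentially the same route as the paper: decompose any morphism of $\mathcal{B}(A^n,A^n)$ into slices each containing a single elementary braiding $\sigma_k^{\pm 1}$, observe that the strand count is conserved, and then match the relations on the two sides. The only difference is one of detail: where the paper simply asserts that ``the equations holding in $\mathcal{B}(A^n,A^n)$ and $B_n$ are the same,'' you correctly identify this as the substantive step and justify it by noting that regular isotopy and braid isotopy coincide on progressive diagrams (since monotone strands admit no curls, hence no Reidemeister~1 moves), which is a welcome strengthening of the published argument.
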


This can easily be seen in string diagrams: a morphism in
$\mathcal{B}$ can only be made of identities, positive and negative
braids. As a string diagram in a monoidal category, it can be drawn in
general position, where all braids appear at a different height. This
can therefore be decomposed as a sequential composition of slices
containing exactly one positive or negative braid.  The number of
wires between each slice remains constant given that braids and
identities always have as many outputs as inputs; let us call this
number $n$.  Each of these slices corresponds to a generator or
generator inverse in $B_n$.  The equations holding in
$\mathcal{B}(A^n, A^n)$ and $B_n$ are the same, hence the equality.

Therefore, braided monoidal categories generalize the braid group
by allowing for other morphisms than braids and identities. The word
problem for the braid group is well understood:

\begin{theorem}
  The word problem for the braid group $B_n$ can be solved in
  polynomial time: given two strings of generators and generator
  inverses, one can determine if they represent the same braid in
  quadratic time in the length of the strings (for a fixed $n$).
\end{theorem}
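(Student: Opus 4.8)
The plan is to reduce the word problem to the computation of a canonical normal form: I would exhibit a function assigning to each braid a unique combinatorial representative, show that two words of generators are equal in $B_n$ if and only if they yield the same representative, and bound the cost of computing it. The most economical such representative for a quadratic bound is the \emph{greedy (left) normal form} coming from Garside theory, so the argument splits into an algebraic part (existence and uniqueness of the form) and an algorithmic part (computing it in quadratic time).

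First I would set up the positive braid monoid $B_n^+$, generated by $\sigma_1, \dots, \sigma_{n-1}$ subject only to the braid relations, together with the fundamental (half-twist) element $\Delta$, which is divisible on both sides by every generator. The key structural facts to establish are that $B_n^+$ embeds in $B_n$, that left-divisibility makes $B_n^+$ a lattice (any two elements have a greatest common left divisor and a least common right multiple), and that the \emph{simple elements} — the left divisors of $\Delta$ — are exactly the \emph{permutation braids}, in bijection with $S_n$. From these one obtains, for every braid $\beta$, a unique expression $\beta = \Delta^{k} s_1 s_2 \cdots s_m$ with $k \in \mathbb{Z}$, each $s_i$ a simple element distinct from the identity and from $\Delta$, and each adjacent pair $s_i s_{i+1}$ \emph{left-weighted}, meaning that $s_i$ already absorbs every generator that $s_{i+1}$ could pass to it. Uniqueness of this data is what makes it a solution to the word problem.

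The algorithm then proceeds in two phases. Given an input word $w$ of length $\ell$ in the generators and their inverses, I would first rewrite each $\sigma_i^{-1}$ as $\Delta^{-1}$ times a fixed positive word and collect the powers of $\Delta$ on the left, using the fact that conjugation by $\Delta$ permutes the generators; this produces an expression $\Delta^{-N} u$ with $u$ a positive word of length $O(\ell)$. In the second phase I would convert $u$ into left-weighted form by a bubble-sort-style computation: repeatedly inspect adjacent pairs of simple factors and, whenever a pair fails to be left-weighted, slide the offending generators leftward. For fixed $n$ each simple factor is a bounded piece of data (an element of $S_n$), so each local comparison and rewrite is $O(1)$, each left-to-right pass is $O(\ell)$, and a standard counting argument shows that only $O(\ell)$ passes are needed, giving the $O(\ell^2)$ bound. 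Finally I would compare the two resulting normal forms symbol by symbol.

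The main obstacle is the algebraic part: proving that $B_n^+$ has the lattice structure and that $\Delta$ is a Garside element, since uniqueness of the normal form rests entirely on the existence of common multiples and divisors and on confluence of the left-weighting rewrites. Once this structure is in place, the embedding $B_n^+ \hookrightarrow B_n$, the classification of simple elements via $S_n$, and the complexity bookkeeping are comparatively routine. As an alternative route avoiding Garside theory one could instead invoke Artin's faithful action of $B_n$ on the free group $F_n$, sending $\sigma_i$ to the automorphism $x_i \mapsto x_i x_{i+1} x_i^{-1}$, $x_{i+1} \mapsto x_i$, fixing the other generators, and test equality of the induced automorphisms; but controlling the length of the resulting words in $F_n$ so as to keep the running time polynomial is delicate, which is why I would prefer the Garside normal form.
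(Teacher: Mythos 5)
The paper does not actually prove this theorem: it states it and defers entirely to the literature, citing Dehornoy's survey of efficient solutions to the braid word problem, and then remarks that none of these techniques seem to generalize to braided monoidal categories. Your proposal is therefore not ``different from the paper's proof'' so much as a filled-in version of what the paper leaves to a citation, and the route you chose --- the Garside/greedy left normal form $\Delta^{k} s_1 \cdots s_m$ with left-weighted adjacent permutation braids, computed by collecting powers of $\Delta$ and then bubble-sorting the simple factors --- is precisely one of the standard methods covered by that reference (alongside handle reduction, Dynnikov coordinates, and the faithful linear representations), and it does achieve the stated $O(\ell^2)$ bound for fixed $n$. Your outline is essentially correct: the genuinely nontrivial inputs are the ones you flag (the embedding $B_n^+ \hookrightarrow B_n$, the lattice structure on $B_n^+$ under left-divisibility, and the identification of the divisors of $\Delta$ with permutation braids), and these are exactly the classical results of Garside and Elrifai--Morton that one would import rather than reprove. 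Two small points of bookkeeping: when you push each $\Delta^{-1}$ to the left you should note that this step is itself $O(\ell^2)$ letter-transpositions in the worst case (each of up to $O(\ell)$ copies of $\Delta^{-1}$ commutes past $O(\ell)$ generators via $\sigma_i \Delta = \Delta \sigma_{n-i}$), which is fine for the quadratic bound but worth making explicit; and the claim that $O(\ell)$ left-weighting passes suffice deserves its usual justification (a potential-function or ``each pass strictly increases the prefix that is already left-weighted'' argument), since confluence alone does not give the pass count. Neither issue is a gap in the mathematics, only in the level of detail.
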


See~\cite{dehornoy2007efficient} for a review of the various techniques
which can be used to achieve this complexity. It seems hard to generalize
any of these to the case of braided monoidal categories.

Intuitively, the word problem for braided monoidal categories is
harder than the one for the braid group because of the existence of
other morphisms which can block the interaction between
braids. Because of these additional morphisms, string diagrams in
braided monoidal categories can look \emph{knotted} and the
equivalence problem for them intuitively becomes harder.
In this paper we make this intuition more precise, by showing
that the equivalence problem for braided monoidal categories
is at least as hard as the unknotting problem.

\subsection{Braided pivotal categories}

\begin{definition}
  In a monoidal category $\mathcal{C}$, an object $A \in \mathcal{C}$ has a \textbf{left adjoint} $B \in \mathcal{C}$ (or equivalently, $A$ is the \textbf{right adjoint} of $B$) when there
  are morphisms
  \begin{tikzpicture}[scale=.3,baseline=-.6cm]
    \begin{stringdiagram}{0}
      \capslice{0}
    \end{stringdiagram}
    \node[scale=.7,domainlabel] at (output_1) {$A$};
    \node[scale=.7,domainlabel] at (output_2) {$B$};
  \end{tikzpicture}
  and
  \begin{tikzpicture}[scale=.3,baseline=-.1cm]
    \begin{stringdiagram}{2}
      \cupslice{0}
    \end{stringdiagram}
    \node[scale=.7,domainlabel] at (input_1) {$B$};
    \node[scale=.7,domainlabel] at (input_2) {$A$};
  \end{tikzpicture}
  such that the \textbf{yanking equations} (or \textbf{zig-zag equations}) are satisfied:
  \[
    \vspace{-.3cm}
    \begin{tikzpicture}[scale=.3]
      \begin{scope}[xshift=12cm,yshift=-1cm]
        \begin{stringdiagram}{1}
          \capslice{1}
          \cupslice{0}
        \end{stringdiagram}
        \node[domainlabel] at (input_1) {$B$};
        \node[domainlabel] at (output_1) {$B$};
      \end{scope}
      \node at (14.25,-2.75) {$=$};
      \begin{scope}[xshift=15.50cm,yshift=-1.75cm]
        \begin{stringdiagram}{1}
          \diagslicenovertex{0}{1}{1}
          \diagslicenovertex{0}{1}{1}
        \end{stringdiagram}
        \node[domainlabel] at (input_1) {$B$};
        \node[domainlabel] at (output_1) {$B$};
      \end{scope}

      \begin{scope}[xshift=-20cm]
        \begin{scope}[xshift=19cm,yshift=-1cm]
          \begin{stringdiagram}{1}
            \capslice{0}
            \cupslice{1}
          \end{stringdiagram}
          \node[domainlabel] at (input_1) {$A$};
          \node[domainlabel] at (output_1) {$A$};
        \end{scope}
        \node at (21.25,-2.75) {$=$};
        \begin{scope}[xshift=22.50cm,yshift=-1.75cm]
          \begin{stringdiagram}{1}
            \diagslicenovertex{0}{1}{1}
            \diagslicenovertex{0}{1}{1}
          \end{stringdiagram}
          \node[domainlabel] at (input_1) {$A$};
          \node[domainlabel] at (output_1) {$A$};
        \end{scope}
      \end{scope}

    \end{tikzpicture}
    \label{eq:adjoint}
  \]
\end{definition}

\begin{definition}
  A monoidal category $\mathcal{C}$ is \textbf{left autonomous}
  if every object $A \in \mathcal{C}$ has a left adjoint $\prescript{*}{}{A}$. A monoidal category $\mathcal{C}$ is \textbf{right autonomous} if every object $A \in \mathcal{C}$ has a right adjoint $A^*$.
  A category that is both left and right autonomous is simply called \textbf{autonomous}.
\end{definition}

\begin{lemma} \label{lemma:braided-autonomous}
  Any braided monoidal category that is left autonomous is also right
  autonomous (and therefore autonomous).
\end{lemma}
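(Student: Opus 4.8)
The plan is to use the braiding to transport the left-adjoint structure onto the very same object, turning it into a right-adjoint structure. Fix an object $A$. Since $\cC$ is left autonomous, $A$ has a left adjoint $L := \prescript{*}{}{A}$, equipped with a unit (cap) $\eta : I \to A \otimes L$ and a counit (cup) $\epsilon : L \otimes A \to I$ satisfying the yanking equations. I claim that the \emph{same} object $L$ is a right adjoint of $A$, for the structure maps
\[
  \eta' := \sigma_{A,L} \circ \eta : I \to L \otimes A,
  \qquad
  \epsilon' := \epsilon \circ \sigma^{-1}_{L,A} : A \otimes L \to I,
\]
obtained by composing the original cap and cup each with a single braiding. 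Note the deliberate asymmetry: I attach the positive braiding $\sigma_{A,L}$ on one side and the negative braiding $\sigma^{-1}_{L,A}$ on the other. Graphically, $\eta'$ and $\epsilon'$ are the original cap and cup with one crossing added, of opposite handedness.

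To establish the claim it suffices to verify the two right-handed yanking equations
\[
  (\epsilon' \otimes 1_A) \circ (1_A \otimes \eta') = 1_A,
  \qquad
  (1_L \otimes \epsilon') \circ (\eta' \otimes 1_L) = 1_L .
\]
By Theorem~\ref{thm:diag-coherence-braided} it is enough to check that the two sides of each are regularly isotopic as three-dimensional string diagrams. Drawing the left-hand side of the first equation and expanding $\eta'$ and $\epsilon'$, one obtains the ordinary zig-zag of $\eta$ and $\epsilon$ decorated with two crossings lying on the dual strand $L$: a positive crossing near the cap (coming from $\sigma_{A,L}$) and a negative crossing near the cup (coming from $\sigma^{-1}_{L,A}$).

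I would then carry out the isotopy in two moves. First, use naturality of the braiding, i.e.\ the pull-through move of Figure~\ref{fig:pull-through}, to slide the two crossings along the $L$-strand until they are adjacent; since they have opposite handedness, the Reidemeister~2 move of Figure~\ref{fig:reidemeister-2} (valid because $\sigma$ is invertible) cancels them. What remains is exactly the original zig-zag of $\eta$ and $\epsilon$, which straightens to $1_A$ by the yanking equations for the left adjoint. The second equation is handled by the mirror-image argument. As $A$ was arbitrary and every object of a left autonomous category has a left adjoint, every object also has a right adjoint, so $\cC$ is right autonomous and hence autonomous.

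The main obstacle is precisely the handedness bookkeeping flagged above. Choosing the \emph{same} braiding on both sides would leave, after the pull-through, a like-signed pair of crossings forming a curl --- a Reidemeister~1 twist --- which cannot be removed under regular isotopy, so the yanking equation would fail by a nontrivial self-twist. The only genuine work is therefore to fix the conventions for $\sigma$, the cap, and the cup carefully enough to guarantee that one positive and one negative crossing appear; once that is settled, the cancellation is routine.
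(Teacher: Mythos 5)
Your proof is correct and is precisely the standard argument behind this result; the paper itself gives no argument, simply citing Lemma~4.17 of Selinger's survey, whose proof is the construction you describe. Your handedness bookkeeping is also right and can be confirmed with the writhe invariant the paper introduces later: with two like-signed braidings the snake composite is a single arc with writhe $\pm 2$, which is preserved by regular isotopy and so obstructs equality with the identity, whereas your asymmetric choice gives writhe $0$ and the pull-through/Reidemeister~2/yanking sequence then succeeds (note only that Theorem~\ref{thm:diag-coherence-braided} justifies the braided-monoidal moves, while the final straightening is an explicit use of the yanking hypothesis for the left adjoint, as you indeed state).
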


\begin{proof}
  See Lemma 4.17 in \cite{selinger2010survey}.
\end{proof}

\begin{definition}
  A \textbf{strict pivotal category} is a monoidal category where every object $A$ has identical
  left and right adjoints.
\end{definition}

\noindent In a braided pivotal category, for each object $A$ there is an object $B$
with the following morphisms:
\[
\begin{tikzpicture}[scale=.3]
  \begin{stringdiagram}{0}
    \capslice{0}
  \end{stringdiagram}
  \node[scale=.7,domainlabel] at (output_1) {$A$};
  \node[scale=.7,domainlabel] at (output_2) {$B$};
  \begin{scope}[xshift=3cm]
    \begin{stringdiagram}{0}
      \capslice{0}
    \end{stringdiagram}
    \node[scale=.7,domainlabel] at (output_1) {$B$};
    \node[scale=.7,domainlabel] at (output_2) {$A$};
  \end{scope}
  \begin{scope}[xshift=6cm,yshift=-1.5cm]
    \begin{stringdiagram}{2}
      \cupslice{0}
    \end{stringdiagram}
    \node[scale=.7,domainlabel] at (input_1) {$B$};
    \node[scale=.7,domainlabel] at (input_2) {$A$};
    \begin{scope}[xshift=3cm]
      \begin{stringdiagram}{2}
        \cupslice{0}
      \end{stringdiagram}
      \node[scale=.7,domainlabel] at (input_1) {$A$};
      \node[scale=.7,domainlabel] at (input_2) {$B$};
    \end{scope}
  \end{scope}
\end{tikzpicture}
\vspace{-.5cm}
\]
such that all four yanking equations are satisfied.

\begin{definition}[{\cite{freyd1989braided}}]
  $\ROTang$ is the free braided pivotal category generated by an object represented by the symbol ``$\uparrow$''.
  We denote by ``$\downarrow$'' its adjoint.
\end{definition}

As the notations suggest, the wires of string diagrams in $\ROTang$ can be associated with an upwards or downwards orientation. We adopt the following representation for the morphisms arising from the pivotal structure:
\[
\begin{tikzpicture}[scale=.3]
  \begin{stringdiagram}{0}
    \capright{0}
  \end{stringdiagram}
  \node[scale=.7,domainlabel] at (output_1) {$\uparrow$};
  \node[scale=.7,domainlabel] at (output_2) {$\downarrow$};
  \begin{scope}[xshift=3cm]
    \begin{stringdiagram}{0}
      \capleft{0}
    \end{stringdiagram}
    \node[scale=.7,domainlabel] at (output_1) {$\downarrow$};
    \node[scale=.7,domainlabel] at (output_2) {$\uparrow$};
  \end{scope}
  \begin{scope}[xshift=6cm,yshift=-1.5cm]
    \begin{stringdiagram}{2}
      \cupright{0}
    \end{stringdiagram}
    \node[scale=.7,domainlabel] at (input_1) {$\downarrow$};
    \node[scale=.7,domainlabel] at (input_2) {$\uparrow$};
    \begin{scope}[xshift=3cm]
      \begin{stringdiagram}{2}
        \cupleft{0}
      \end{stringdiagram}
      \node[scale=.7,domainlabel] at (input_1) {$\uparrow$};
      \node[scale=.7,domainlabel] at (input_2) {$\downarrow$};
    \end{scope}
  \end{scope}
\end{tikzpicture}
\]
With this convention, we can represent any oriented knot, i.e.\ any
embedding of an oriented loop in $\mathbb{R}^3$, as a morphism
in $\ROTang$, as in Figure~\ref{fig:example-knot-rotang}.
In fact, this representation is more than a convention, as the following
theorem shows:

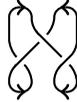
\begin{figure}
  \centering
\begin{tikzpicture}[scale=.3]
  \def\defaultroundedness{15pt}
  \begin{stringdiagram}{0}
    \startslice{4}
    \capright{0}
    \capleft{0}
    \finishslice
    \pbraidslice{1}
    \startslice{4}
    \nbraidslice{0}
    \nbraidslice{0}
    \finishslice
    \startslice{0}
    \cupright{0}
    \cupleft{0}
    \finishslice
  \end{stringdiagram}
\end{tikzpicture}
\caption{An oriented knot as a morphism in $\ROTang$}
\label{fig:example-knot-rotang}
\end{figure}

\begin{definition}[{(Definition~2.4 in \cite{freyd1989braided})}]
  A \emph{tangle} is a rectangular portion of a knot diagram. We say that two tangles are equal if there is a regular isotopy carrying one to the other in such a way that corresponding edges of the diagram are preserved setwise.
\end{definition}

\begin{theorem}[{(Theorem~3.5 in \cite{freyd1989braided})}] \label{thm:rotang}
  $\ROTang$ is the category of oriented tangles up to regular isotopy.
\end{theorem}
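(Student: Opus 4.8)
The plan is to exhibit an isomorphism of braided pivotal categories between $\ROTang$ and the category $\mathbf{T}$ of oriented tangles up to regular isotopy, by constructing a structure-preserving functor $F \colon \ROTang \to \mathbf{T}$ and showing it is bijective on objects and on morphisms. First I would check that $\mathbf{T}$ is itself a braided pivotal category: the braiding is given by crossing two strands, the cups and caps by local minima and maxima, and the defining equations — the hexagon identities, naturality of $\sigma$, and the four yanking equations — all hold because the corresponding tangle diagrams are manifestly related by a regular isotopy. Since $\ROTang$ is by definition the \emph{free} braided pivotal category on one object, sending $\uparrow$ to a single upward-oriented endpoint then determines $F$ uniquely. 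This already establishes soundness: equal morphisms of $\ROTang$ are carried to regularly isotopic tangles.

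For surjectivity of $F$ I would argue as in the braided case discussed after Theorem~\ref{thm:diag-coherence-braided}. Objects of both categories are finite words in $\{\uparrow,\downarrow\}$ and $F$ is the identity on objects. For morphisms, given an oriented tangle diagram I would perturb it into general position with respect to the height function, so that every horizontal level contains at most one Morse critical point (a local maximum or minimum, i.e.\ a cap or cup) or one transverse crossing (a braiding $\sigma^{\pm 1}$). Slicing between consecutive critical levels decomposes the diagram as a vertical composite of elementary slices, each a tensor of one generator with identity wires; this composite is a morphism of $\ROTang$ whose image under $F$ is the given tangle, so $F$ is full.

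The heart of the argument, and the step I expect to be the main obstacle, is faithfulness, i.e.\ completeness: if $F(f)$ and $F(g)$ are regularly isotopic then $f = g$ already in $\ROTang$. This requires a Reidemeister-type theorem for regular isotopy of \emph{oriented} diagrams, stating that two such diagrams are regularly isotopic precisely when they are connected by a finite sequence of local moves — planar isotopy, the Reidemeister~II and~III moves, and the moves sliding a cap or cup past a crossing or straightening a zig-zag — with the Reidemeister~I move pointedly \emph{excluded}, since regular isotopy preserves the framing and the twist is nontrivial in a merely braided (non-ribbon) pivotal category. One then verifies that each generating move is derivable from the braided pivotal axioms: the R2 and R3 moves come from $\sigma$ being a natural isomorphism together with the hexagon identities via Theorem~\ref{thm:diag-coherence-braided}, the zig-zags are exactly the yanking equations, and the slides follow from naturality of $\sigma$ against the cups and caps.

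A subtlety within this last step is the proliferation of orientation variants: each unoriented move has several oriented incarnations according to the strand directions, and checking them all directly would be tedious. I would reduce this by using the pivotal structure to bend strands, relating every oriented variant to a single chosen representative by pre- and post-composing with cups and caps, so that only one move per family need be derived from the axioms. Assembling these pieces — that $\mathbf{T}$ is braided pivotal, that $F$ is full and bijective on objects, and that regular isotopy is generated by axiom-derivable moves — shows $F$ is an isomorphism, which is the claim. The detailed verification of the underlying oriented Reidemeister theorem is carried out in \cite{freyd1989braided}.
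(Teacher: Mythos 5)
The paper does not prove this statement---it is imported verbatim as Theorem~3.5 of \cite{freyd1989braided}---so there is no in-paper proof to compare against; your outline (check that the category of oriented tangles up to regular isotopy is braided pivotal, obtain the comparison functor from the freeness of $\ROTang$, prove fullness by putting a tangle diagram in Morse position and slicing it into elementary generators, and prove faithfulness via an oriented Reidemeister-type theorem for regular isotopy with R1 excluded, checking each move and its orientation variants against the axioms) is the standard argument and matches the structure of the cited proof. The one caveat is that essentially all of the mathematical content sits in the faithfulness step---the oriented regular-isotopy Reidemeister theorem and the verification of every oriented variant of each move---which you correctly identify as the main obstacle but ultimately defer to \cite{freyd1989braided}; as a self-contained proof it is therefore only a sketch, but as a reconstruction of the strategy behind the cited result it is accurate.
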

This means that two morphisms $f, g \in \ROTang$ are equal if and only if
their string diagrams, considered as oriented tangles in three-dimensional space
as defined above, are regularly isotopic. Regular isotopy is a more restrictive sort
of isotopy than the notion generally used in knot theory, as the following
morphisms are distinct in $\ROTang$:
\[
\begin{tikzpicture}[scale=.3]
  \begin{stringdiagram}{0}
    \capleft{0}
    \pbraidslice{0}
  \end{stringdiagram}
  \node at (2,-2) {$\neq$};
  \begin{scope}[xshift=4cm,yshift=-.5cm]
    \begin{stringdiagram}{0}
    \capright{0}
    \end{stringdiagram}
  \end{scope}
  \begin{scope}[xshift=8cm]
      \node at (-2,-2) {$\neq$};
    \begin{stringdiagram}{0}
      \capleft{0}
      \nbraidslice{0}
    \end{stringdiagram}
  \end{scope}
\end{tikzpicture}
\]
The move that equates them is called the Reidemeister type I move, which is therefore
not admissible for the string diagrams of $\ROTang$.

\begin{definition}
  A morphism $f \in \ROTang$ is an \textbf{(oriented) knot} if its string diagram has a single connected component.
\end{definition}

\subsection{The unknotting problem}

In this section we give a brief overview of the unknotting problem and some complexity results about it.

\begin{definition}
  A \textbf{topological knot} is the embedding of a loop in $\mathbb{R}^3$. Two knots $K_1$, $K_2$ are in \textbf{general isotopy} if there is an orientation-preserving homeomorphism $h$ of $\mathbb{R}^3$ such that $h(K_1) = h(K_2)$.
  A \textbf{knot diagram} is the projection of a knot on a plane, such that no two crossings happen at the same place. Additionally the diagram records the relative position of the strands at each crossing.
\end{definition}

All knots considered here will be required to be tame, i.e.\ isotopic to a polygonal knot. This gets rid of some pathological cases.

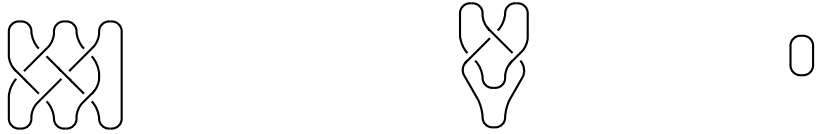
\begin{figure}
  \centering
  \begin{subfigure}{0.35\textwidth}
    \centering
    \begin{tikzpicture}[scale=0.3]
  \begin{stringdiagram}{0}
    \startslice{6}
    \capslice{0}
    \capslice{0}
    \capslice{0}
    \finishslice

    \startslice{6}
    \nbraidslice{1}
    \nbraidslice{0}
    \finishslice
    \startslice{6}
    \pbraidslice{0}
    \pbraidslice{0}
    \finishslice
    \startslice{6}
    \nbraidslice{1}
    \nbraidslice{0}
    \finishslice
    \startslice{0}
    \cupslice{0}
    \cupslice{0}
    \cupslice{0}
    \finishslice
  \end{stringdiagram}
\end{tikzpicture}
  \end{subfigure}
  \begin{subfigure}{0.35\textwidth}
    \centering
    \begin{tikzpicture}[scale=0.3]
  \begin{stringdiagram}{0}
    \startslice{4}
    \capslice{0}
    \capslice{0}
    \finishslice

    \pbraidslice{1}
    
    \startslice{4}
    \nbraidslice{0}
    \nbraidslice{0}
    \finishslice

    \cupslice{1}
    \cupslice{0}
  \end{stringdiagram}
\end{tikzpicture}
  \end{subfigure}
  \begin{subfigure}{0.15\textwidth}
    \centering
    \begin{tikzpicture}[scale=0.3]
  \begin{stringdiagram}{0}
    \capslice{0}
    \cupslice{0}
  \end{stringdiagram}
\end{tikzpicture}
  \end{subfigure}
  \caption{Some knot diagrams}
  \label{fig:example-knot}
\end{figure}

Some example knot diagrams are given in Figure~\ref{fig:example-knot}.
The Reidemeister moves are local transformations of knot diagrams which are divided in three categories, as shown in Figure~\ref{fig:reidemeister-moves}. Note that in addition to these moves, all planar isotopies are implicitly allowed, without restricting the direction of strands in any way (unlike the recumbent isotopies of monoidal string diagrams).

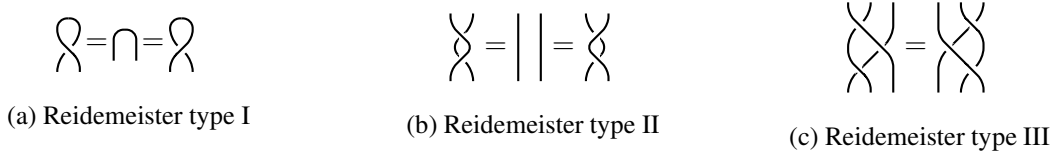
\begin{figure}
  \centering
  \begin{subfigure}{0.33\textwidth}
    \centering
    \begin{tikzpicture}[scale=.3]
  \begin{stringdiagram}{0}
    \capslice{0}
    \pbraidslice{0}
  \end{stringdiagram}
  \node at (1.25,-2) {$=$};
  \begin{scope}[xshift=2.5cm,yshift=-.5cm]
    \begin{stringdiagram}{0}
    \capslice{0}
    \end{stringdiagram}
  \end{scope}
  \begin{scope}[xshift=5cm]
      \node at (-1.25,-2) {$=$};
    \begin{stringdiagram}{0}
      \capslice{0}
      \nbraidslice{0}
    \end{stringdiagram}
  \end{scope}
\end{tikzpicture}
    \caption{Reidemeister type I}
  \end{subfigure}
  \begin{subfigure}{0.33\textwidth}
    \centering
      \begin{tikzpicture}[scale=.3]
    \begin{stringdiagram}{2}
      \pbraidslice{0}
      \nbraidslice{0}
    \end{stringdiagram}
    \node at (1.5,-1) {$=$};
    \begin{scope}[xshift=3cm]
      \draw[edge] (-.5,.5) -- (-.5,-2.5);
      \draw[edge] (.5,.5) -- (.5,-2.5);
      \node at (1.5,-1) {$=$};
    \end{scope}
    \begin{scope}[xshift=6cm]
      \begin{stringdiagram}{2}
        \nbraidslice{0}
        \pbraidslice{0}
      \end{stringdiagram}
    \end{scope}
  \end{tikzpicture}
    \caption{Reidemeister type II}
  \end{subfigure}
  \begin{subfigure}{0.3\textwidth}
    \centering
      \begin{tikzpicture}[scale=.3]
    \begin{stringdiagram}{3}
      \pbraidslice{0}
      \pbraidslice{1}
      \pbraidslice{0}
    \end{stringdiagram}
    \node at (2,-1.5) {$=$};
    \begin{scope}[xshift=4cm]
      \begin{stringdiagram}{3}
        \pbraidslice{1}
        \pbraidslice{0}
        \pbraidslice{1}
      \end{stringdiagram}
    \end{scope}
  \end{tikzpicture}
    \caption{Reidemeister type III}
  \end{subfigure}
  \caption{Reidemeister moves}
  \label{fig:reidemeister-moves}
\end{figure}

\begin{theorem}[{(Reidemeister)}]
  Two knot diagrams represent topological knots in general isotopy if and only if they are related by a sequence of Reidemeister moves.
\end{theorem}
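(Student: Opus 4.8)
The plan is to prove the two implications separately, as is standard for this classical result. The easy direction is soundness: I would verify that each of the three Reidemeister moves, together with every planar isotopy of the diagram, lifts to an ambient isotopy of the embedded loop in $\mathbb{R}^3$ that agrees with the identity outside a small ball containing the local picture. A type~I move slides a small monogon off a strand, a type~II move pulls a strand back from over (or under) a parallel strand, and a type~III move slides a strand across a crossing; in each case the local picture is manifestly the planar shadow of a continuous deformation of the loop that leaves the rest of the embedding fixed. Composing these gives an orientation-preserving homeomorphism of $\mathbb{R}^3$, so diagrams related by a finite sequence of Reidemeister moves present knots in general isotopy.

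For the converse I would pass to the piecewise-linear setting. Since all knots here are tame, each is isotopic to a polygonal knot, and I would invoke the standard PL fact that two polygonal knots are ambient isotopic if and only if they are joined by a finite sequence of elementary \emph{triangle moves}: a triangle move replaces one edge $uv$ of the polygon by the two edges $uw$ and $wv$ of a triangle $uvw$ whose interior meets the rest of the knot only along $uv$. Choosing a generic projection direction for both knots so that all the intermediate diagrams are \emph{regular} (finitely many transversal double points, no triple points, no tangencies, no vertex lying over a crossing), the problem reduces to showing that the effect on the diagram of one triangle move can be realised by Reidemeister moves and planar isotopies.

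The heart of the proof, and the main obstacle, is exactly this reduction, which is a finite but delicate case analysis. The key device is to \emph{subdivide} the triangle $uvw$ into a fine mesh of sub-triangles, small enough that performing the triangle move one sub-triangle at a time sweeps the moving edge across a disk whose projected interior meets the rest of the knot in one standard local configuration. Each elementary sub-move then changes the diagram only inside a small disk, and a check of the finitely many ways the remaining strands can project into a sub-triangle shows that this change is either a planar isotopy (when the disk is clear) or precisely one Reidemeister move, the three move types arising from the three essentially different local configurations. The care required is in enumerating these configurations exhaustively, in verifying that each genuinely corresponds to a single move of the claimed type, and in ensuring that the projection can be kept regular throughout the subdivision by a preliminary perturbation of the plane. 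Assembling the sub-moves realises the full triangle move, and hence the whole PL isotopy, as a finite sequence of Reidemeister moves, which completes the argument.
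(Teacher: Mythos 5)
The paper does not prove this statement; it is quoted as a classical theorem of Reidemeister (and Alexander--Briggs) and used as a black box, so there is no in-paper proof to compare against. Your sketch is the standard textbook argument --- soundness of each move as a local ambient isotopy, plus completeness via reduction to PL triangle moves and subdivision of the triangle into sub-triangles whose projections meet the rest of the diagram in one of finitely many local configurations --- and it is correct in outline, with the honest caveat (which you flag) that the exhaustive case analysis is the real content. The only point worth adding is that the paper's notion of ``general isotopy'' is phrased as an orientation-preserving homeomorphism of $\mathbb{R}^3$, so the converse direction also needs the standard fact that such a homeomorphism carrying one tame knot to another can be upgraded to a PL ambient isotopy before the triangle-move machinery applies.
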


Knot diagrams can be encoded in various ways, for instance as
four-valent planar maps where vertices are crossings and edges are parts of strands.
This makes it possible to formulate decision problems about topological knots and study their complexity.

\begin{definition}
  The unknotting problem \textsc{Unknot} is the decision problem to determine if a topological knot can be related by a general isotopy to the unknot.
  In other words, it consists in determining whether there exists a series of Reidemeister moves
  which eliminates all crossings in a given knot diagram.
\end{definition}

This problem was first formulated by~\cite{Dehn1910} and its decidability remained open until~\cite{haken1961theorie} found an algorithm for it. The problem has since attracted a lot of attention and we give a summary of the latest results about it.

\begin{theorem}[{\cite{lackenby2015polynomial}}]
  There exists a polynomial $P(c)$ such that for every knot diagram $K$ of the unknot with $c$ crossings,
  there is a sequence of Reidemeister moves unknotting it whose length
  is bounded by $P(c)$.
\end{theorem}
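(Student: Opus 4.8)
The plan is to separate the purely qualitative content of the statement from its quantitative content, since only the latter carries any difficulty. If $K$ is a diagram of the unknot with $c$ crossings, then $K$ and the crossingless diagram represent topological knots in general isotopy, so by Reidemeister's theorem (stated above) there already exists \emph{some} finite sequence of Reidemeister moves carrying $K$ to the trivial diagram. What the theorem asserts is that this sequence may be chosen with length bounded by a fixed polynomial $P(c)$ that does not depend on $K$. Thus the entire task is to produce an unknotting sequence whose length admits a polynomial \emph{a priori} bound, rather than merely to certify that one exists.

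To obtain such a bound I would pass from the one-dimensional diagram to the two-dimensional certificate of unknottedness, namely an embedded spanning disk $\Delta$ with $\partial \Delta = K$ inside $S^3$ (the one-point compactification of $\mathbb{R}^3$). The strategy is to encode $K$ combinatorially as a subcomplex of a triangulation $T$ of $S^3$ having $O(c)$ tetrahedra, to put $\Delta$ into \emph{normal form} relative to $T$ using normal surface theory, and to bound the combinatorial complexity of the resulting normal disk — the number of its elementary triangle and quadrilateral pieces — by a polynomial in $c$. A simplified, low-complexity spanning disk then prescribes a concrete sequence of local simplifications: sweeping the disk across the tetrahedra of $T$, together with a controlled number of Pachner moves adjusting $T$ itself, isotopes $K$ down to the unknot in a structured way.

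The main work, and the main obstacle, is the translation of this three-dimensional simplification into an honest sequence of planar Reidemeister moves while keeping the count polynomial. Each elementary modification of the disk or of the triangulation must be realised by a bounded block of Reidemeister moves, and one must simultaneously control the crossing number of every intermediate diagram so that no stage suffers an exponential blowup. This simultaneous control is precisely where the naive arguments break down: existence of an unknotting sequence (hence decidability) already follows from Haken's algorithm, but the surface enumerations underlying it give only exponential bounds, so a genuine polynomial bound demands a far more delicate coupling of the number of moves to the complexity of the intermediate states. Carrying this out rigorously, and extracting the explicit degree of $P$, is the substantial content of the argument of \cite{lackenby2015polynomial}, which we invoke here rather than reprove.
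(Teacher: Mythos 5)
The paper states this theorem purely as an imported result of \cite{lackenby2015polynomial} and supplies no proof of its own, so there is nothing internal to compare your argument against; your proposal correctly isolates the quantitative bound as the only nontrivial content (existence of \emph{some} unknotting sequence already follows from Reidemeister's theorem) and, like the paper, defers the substantive work to the cited reference. Your sketch of the strategy via a normal spanning disk in a triangulation of bounded size, with control of the intermediate diagrams' complexity, is a reasonable account of the shape of Lackenby's argument, and invoking the citation rather than reproving it is exactly what the paper does.
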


\begin{corollary}
  \textsc{Unknot} lies in NP.
\end{corollary}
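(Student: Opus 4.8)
The plan is to exhibit, for every \textsc{yes}-instance, a certificate of polynomial size that can be verified in polynomial time; the previous theorem of \cite{lackenby2015polynomial} supplies exactly the object we need. Given a knot diagram $K$ with $c$ crossings that is in fact the unknot, the theorem guarantees an unknotting sequence of Reidemeister moves whose length is at most $P(c)$. I would take the certificate to be this sequence, where each move is recorded together with the local data specifying where in the current diagram it is applied (which crossings or which strand-adjacent region it acts on). The verifier then starts from the combinatorial encoding of $K$, replays the moves one at a time, and accepts if and only if every move is locally valid and the final diagram has no crossings, i.e.\ is the trivial unknot diagram.

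The two things to check are that this certificate has polynomial size and that verification runs in polynomial time, and both reduce to controlling the size of the intermediate diagrams. The key observation is that each Reidemeister move alters the crossing number by a bounded constant: type I changes it by $\pm 1$, type II by $\pm 2$, and type III leaves it unchanged. Hence after applying at most $P(c)$ moves to a diagram with $c$ crossings, every intermediate diagram has at most $c + 2P(c)$ crossings, which is polynomial in $c$. Encoding a knot diagram combinatorially — for instance as a four-valent planar map, as suggested earlier in the excerpt, or equivalently as a Gauss or PD code — takes space polynomial in its number of crossings, so each of the $P(c)$ intermediate diagrams, and the local data attached to each move, can be written down in polynomial space. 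The whole certificate is therefore of polynomial size. Verifying a single move is a purely local operation on this encoding and can be done in time polynomial in the diagram size, so replaying all $P(c)$ moves costs polynomial time in total.

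The main subtlety to address is the interplay between the Reidemeister moves and the planar isotopies that the Reidemeister theorem permits alongside them. I would handle this by working throughout with a combinatorial encoding of diagrams \emph{up to planar isotopy}: in such an encoding planar isotopy is ``free'', since it does not change the combinatorial type, so the verifier only ever needs to certify the three genuine Reidemeister moves and never has to track a continuous rearrangement of the plane. With that convention in place the argument goes through: the existence of a short unknotting sequence (the content of the cited theorem) gives a polynomial certificate, and the bounded per-move change in crossing number keeps every intermediate object polynomially sized, so $\textsc{Unknot}$ admits a polynomial-time verifier and hence lies in $\mathrm{NP}$.
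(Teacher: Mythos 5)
Your argument is correct and is exactly the standard certificate argument that the paper leaves implicit: the corollary is stated without proof as an immediate consequence of Lackenby's polynomial bound, and your write-up (sequence of moves as witness, bounded per-move change in crossing number keeping intermediate diagrams polynomial, combinatorial encoding up to planar isotopy) fills in precisely the intended details.
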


\begin{theorem}[{\cite{lackenby2019efficient}}]
  \textsc{Unknot} lies in co-NP.
\end{theorem}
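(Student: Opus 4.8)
The plan is to establish the dual statement that knottedness lies in NP: since \textsc{Unknot} consists exactly of the diagrams representing the trivial knot, membership in co-NP is equivalent to exhibiting, for every diagram $K$ with $c$ crossings that represents a \emph{nontrivial} knot, a certificate of size polynomial in $c$ and checkable in time polynomial in $c$. So the whole task reduces to producing a short, efficiently verifiable witness of nontriviality.

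First I would recall the topological characterisation underpinning such a witness: a knot is trivial precisely when its Seifert genus vanishes, equivalently when its exterior is a solid torus (by the loop theorem and Dehn's lemma). Hence a certificate of knottedness is a certificate that the Seifert genus, or more robustly the Thurston norm of the exterior, is at least $1$. From the diagram I would build a triangulation $\mathcal{T}$ of the knot exterior with $O(c)$ tetrahedra, and then work inside normal surface theory, where a minimal-genus Seifert surface can be taken normal and its vector of normal coordinates has bit-size bounded polynomially in $c$, in the spirit of the Hass--Lagarias--Pippenger bounds.

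The heart of the argument, and the step I expect to be the main obstacle, is to certify that the genus is genuinely positive and cannot be reduced by compression. Here I would invoke Gabai's theory of sutured manifolds: a taut Seifert surface induces a sutured manifold hierarchy decomposing the exterior into a product sutured manifold, and the existence of such a hierarchy certifies that the surface realises the Thurston norm, hence that the norm is nonzero. The certificate is then this hierarchy: a bounded-length sequence of decomposing normal surfaces together with the combinatorial data verifying, at each stage, that the decomposition is legitimate and that the terminal pieces are products.

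The difficulty is entirely quantitative. One must bound the length of the hierarchy and, crucially, control the blow-up in triangulation complexity and in the bit-size of the decomposing surfaces under repeated cut-open operations, so that the total certificate stays polynomial in $c$ and each verification step runs in polynomial time. Establishing these simultaneous polynomial bounds on the hierarchy is the technical crux, and is precisely the content supplied by the cited work.
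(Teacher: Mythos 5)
The paper offers no proof of this statement: it is imported verbatim from the cited reference, so there is no internal argument to compare yours against. Your outline is a faithful high-level summary of the strategy of that reference --- certifying knottedness via the Thurston norm of the exterior, realised by a taut (normal) Seifert surface together with a Gabai sutured manifold hierarchy terminating in products, with the real work being the polynomial bounds on the size of the hierarchy and of the normal coordinates --- so it matches the source's approach rather than proposing a different one.
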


Recently, Lackenby announced a quasi-polynomial time solution to
\textsc{Unknot}, but the corresponding article has not been made
public to date. No polynomial time algorithm for this problem is known
so far.

\section{Reducing the unknotting problem to the braided pivotal word problem}

Despite the discrepancy between the general isotopy used in the unknotting problem
and the regular isotopy used in $\ROTang$, we will show that the unknotting problem
can be reduced to the word problem in $\ROTang$.
This will show that the word problem for $\ROTang$ is at least as hard
as the unknotting problem. This section is dedicated to this result.

\subsection{Writhe and turning number}

The main differences between the unknotting problem and the word
problem for $\ROTang$ is that in the latter, knots are oriented and
the Reidemeister type I move is not allowed. Because of this, we will
see in this section that we can associate a quantity called
\emph{writhe} to diagrams in $\ROTang$, which is preserved by all the
axioms of this category.

\begin{definition}
  The \textbf{writhe} (or \emph{framing number}) $W(f)$ of a diagram $f \in \ROTang$ is the sum of
  the valuations $W(b)$ for each braiding $b$ which appears in $f$:
  \begin{align*}
  W(
  \begin{tikzpicture}[scale=.3,baseline=-.2cm]
    \begin{stringdiagram}{2}
      \pbraidslice{0}
    \end{stringdiagram}
    \node[above of=output_1,node distance=0.15cm,rotate=180] {\tikz\draw[very thick,->] (0,0);};
    \node[above of=output_2,node distance=0.15cm,rotate=180] {\tikz\draw[very thick,->] (0,0);};
  \end{tikzpicture}) = +1 & &
    W(
  \begin{tikzpicture}[scale=.3,baseline=-.2cm]
    \begin{stringdiagram}{2}
      \nbraidslice{0}
    \end{stringdiagram}
    \node[above of=output_1,node distance=0.15cm,rotate=180] {\tikz\draw[very thick,->] (0,0);};
    \node[above of=output_2,node distance=0.15cm,rotate=180] {\tikz\draw[very thick,->] (0,0);};
  \end{tikzpicture}) = -1 & &
  \end{align*}
\end{definition}

\begin{definition} \label{def:turning-number}
  The \textbf{turning number} (or \emph{winding number}) $T(f)$ of a morphism $f \in \ROTang$ is the sum of the local turning numbers which appear in $f$:
  \begin{align*}
    T(
    \begin{tikzpicture}[scale=.3,baseline=-.7cm]
      \begin{stringdiagram}{0}
        \capright{0}
      \end{stringdiagram}
      \node[scale=.7,domainlabel] at (output_1) {$\uparrow$};
      \node[scale=.7,domainlabel] at (output_2) {$\downarrow$};
    \end{tikzpicture}
    ) = + 1 & &
    T(
    \begin{tikzpicture}[scale=.3,baseline=-.7cm]
     \begin{stringdiagram}{0}
      \capleft{0}
    \end{stringdiagram}
    \node[scale=.7,domainlabel] at (output_1) {$\downarrow$};
    \node[scale=.7,domainlabel] at (output_2) {$\uparrow$};
    \end{tikzpicture}
    ) = - 1 & &
    T(
    \begin{tikzpicture}[scale=.3,baseline=-.15cm]
    \begin{stringdiagram}{2}
      \cupright{0}
    \end{stringdiagram}
    \node[scale=.7,domainlabel] at (input_1) {$\downarrow$};
    \node[scale=.7,domainlabel] at (input_2) {$\uparrow$};
    \end{tikzpicture}
    ) = - 1 & &
    T(
    \begin{tikzpicture}[scale=.3,baseline=-.15cm]
      \begin{stringdiagram}{2}
        \cupleft{0}
      \end{stringdiagram}
      \node[scale=.7,domainlabel] at (input_1) {$\uparrow$};
      \node[scale=.7,domainlabel] at (input_2) {$\downarrow$};
    \end{tikzpicture}
    ) = + 1
  \end{align*}
\end{definition}
\noindent The turning number is well defined because the axioms of $\ROTang$
respect the turning number, making it independent of the particular diagram
considered.

\begin{theorem}[{\cite{trace1983reidemeister}}] \label{thm:trace}
  Let $f, g \in \ROTang$ be two knots. Then $f$ and $g$ are in general isotopy
  as knots (allowing Reidemeister type I moves) if and only if $W(f) =
  W(g)$, $T(f) = T(g)$ and there is a regular isotopy between $f$ and $g$ (disallowing
  Reidemeister type I moves).
\end{theorem}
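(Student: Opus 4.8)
The plan is to prove the two implications of the stated equivalence separately, treating it as a comparison between the regular-isotopy relation (disallowing Reidemeister~I) and the combined data of the general-isotopy class together with the two numerical invariants $W$ and $T$. One implication is immediate: regular isotopy is by definition a special case of general isotopy (one simply never uses a type~I move), while both the writhe and the turning number are manifestly unchanged by planar isotopy and by the type~II and type~III moves. Indeed, a type~II move inserts or deletes a pair of crossings of opposite sign and a type~III move only permutes a triple of crossings without altering their signs, so $W$ is preserved; an analogous inspection of Definition~\ref{def:turning-number} shows $T$ is preserved as well. Hence a regular isotopy forces $W(f)=W(g)$ and $T(f)=T(g)$, and is in particular a general isotopy. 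The content of the theorem is therefore the converse direction.

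For the converse, I would start from Reidemeister's theorem: since $f$ and $g$ are in general isotopy, they are connected by a finite sequence of Reidemeister moves of all three types together with planar isotopies. The only moves that are not already regular isotopies are the type~I moves, each of which creates or deletes a \emph{curl}. I would classify curls by their effect on the pair $(W,T)$: a curl contributes $\pm 1$ to the writhe according to the sign of its self-crossing and, independently, $\pm 1$ to the turning number according to whether its loop winds clockwise or counter-clockwise, so there are exactly four curl types, realising all four values of $(\Delta W,\Delta T)\in\{+1,-1\}\times\{+1,-1\}$.

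The heart of the argument is a normal-form lemma: any sequence of Reidemeister moves can be rewritten, without changing its endpoints, so that all of its type~I insertions are pushed onto a single arc and performed consecutively, the remainder of the sequence being a genuine regular isotopy. This uses that a curl can be slid along its strand and commuted past type~II and type~III configurations. Once the curls are collected, the hypotheses $W(f)=W(g)$ and $T(f)=T(g)$ say precisely that the net $(\Delta W,\Delta T)$ contributed by the collected curls is $(0,0)$. The key cancellation lemma is then that a pair of curls with opposite values of $(\Delta W,\Delta T)$ can be removed by a regular isotopy alone: two such curls can be brought together and annihilated using only type~II and type~III moves, the standard ``Whitney trick''. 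Since the collected curls sum to $(0,0)$ in $(W,T)$, they can be partitioned into such cancelling pairs, so the whole sequence reduces to a regular isotopy, as required.

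I expect the cancellation lemma to be the main obstacle, since it is exactly the point where one must verify that $W$ and $T$ form a \emph{complete} set of obstructions to removing curls, rather than merely invariants of them. Making this precise is essentially the diagram-level incarnation of the Whitney--Graustein theorem, which classifies immersed plane curves up to regular homotopy by their turning number: passing to the underlying shadow curve (forgetting the over/under data) reduces the turning-number bookkeeping to Whitney--Graustein, while the writhe records the residual framing information needed to recover the crossing data. The delicate part is checking that the over/under decorations can be carried along consistently throughout this reduction, so that the resulting regular homotopy of shadows lifts to an honest regular isotopy of diagrams.
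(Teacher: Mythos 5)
The paper does not actually prove Theorem~\ref{thm:trace}: it is imported verbatim from Trace's 1983 paper, so there is no internal proof to compare your attempt against. Judged on its own terms, your outline is the standard argument for this result and is essentially sound. The forward direction is correct as you state it. For the converse, your two key lemmas --- (i) a kink created or destroyed by a type~I move can be transported along the knot to a fixed basepoint by regular isotopy (sliding it past crossings using type~II and~III moves), so that a general isotopy from $f$ to $g$ becomes a regular isotopy from $f$ to $g$ with a word of kinks inserted at the basepoint, and (ii) two adjacent kinks that are opposite in both crossing sign and winding sense annihilate by a planar isotopy and a single type~II move --- are both true and are exactly where the content of Trace's proof lies. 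Your reduction of the remaining combinatorics to linear algebra is also correct: the four kink types contribute $(\pm 1, \pm 2)$ to $(W,T)$ in the normalisation of Definition~\ref{def:turning-number} (a full curl contains one cap and one cup and so changes $T$ by $\pm 2$, not $\pm 1$ as you write --- a harmless normalisation slip; compare the morphisms $a,b,c,d$ of Lemma~\ref{lemma:writhe-turning-straight}), the two cancelling-pair directions $(1,2)$ and $(1,-2)$ are linearly independent, and hence a kink word with vanishing $(W,T)$ decomposes into cancelling pairs --- provided one also records that kinks collected at the basepoint commute with one another up to regular isotopy, a small point you should state explicitly since the partition into pairs requires reordering them. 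The only genuine weakness is that (i) and (ii) are asserted rather than proved, and they are where all the work is; the closing appeal to Whitney--Graustein is not needed for this diagrammatic argument and somewhat obscures the fact that the cancellation lemma is an elementary local picture.
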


This means that to reduce the unknot problem to the word problem for $\ROTang$, we simply need to be able to tweak diagrams to adjust their writhe and turning number without changing their general isotopy class. This is what the following section establishes.

\subsection{Unknotting in braided pivotal categories}

\begin{lemma} \label{lemma:writhe-turning-straight}
  Given a writhe $w$ and a turning number $t$ such that $2w + t$ is a
  multiple of $4$, we can construct a morphism $f \in
  \ROTang$ with domain and codomain $\uparrow$, such that $W(f) = w$ and $T(f) = t$,
  and $f$ is in general isotopy to the identity using
  Reidemeister type I moves.
\end{lemma}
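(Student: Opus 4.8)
The plan is to realize $f$ as a vertical composite of elementary \emph{curls} (Reidemeister type~I kinks) inserted along a single $\uparrow$ strand. Each curl will be removable by one Reidemeister type~I move (Figure~\ref{fig:reidemeister-moves}), so any composite of curls is in general isotopy to the identity using only type~I moves, undoing them one at a time. All that then remains is to choose the curls so that, by additivity of $W$ and $T$ over vertical composition, the total writhe and turning number come out to be exactly $w$ and $t$.

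First I would catalogue the available curls and compute their invariants. A curl is built by inserting into the strand a cap, a braiding, and a cup, arranged so that the strand acquires a single self-crossing bounding one small loop. A braiding contributes $\pm1$ to $W$ and $0$ to $T$, whereas a cap or cup contributes $0$ to $W$ and $\pm1$ to $T$; hence the writhe of a curl equals the sign of its crossing, namely $\pm1$. The crucial point is the turning number: the loop of a curl has precisely two critical points, its cap and its cup, and for an honest loop (as opposed to the cancelling S-bend of a yanking move) these are traversed with the same handedness, so their contributions $\pm1$ add rather than cancel. Thus every curl has $T = \pm2$, and choosing the crossing sign and the loop handedness independently produces all four types, with $(W,T)$ ranging over $\{(+1,+2),(-1,+2),(+1,-2),(-1,-2)\}$.

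It then remains to hit the target. Since $2w+t\equiv 0 \pmod 4$ forces $t$ to be even, write $t = 2s$; in the coordinates $(W,s)$ the four curls are the vectors $(\pm1,\pm1)$, which generate the subgroup $\{(a,b) : a\equiv b \pmod 2\}$ of $\mathbb{Z}^2$. Translating back, this subgroup is exactly $\{(W,T) : 2W+T \equiv 0 \pmod 4\}$, so the hypothesis says precisely that $(w,t)$ is attainable. Concretely, setting $p = (w+s)/2$ and $q = (w-s)/2$ (both integers), I would use $\max(p,0)$ curls of type $(+1,+1)$ and $\max(-p,0)$ of type $(-1,-1)$, together with $\max(q,0)$ of type $(+1,-1)$ and $\max(-q,0)$ of type $(-1,+1)$; splitting $p$ and $q$ into their positive and negative parts keeps every multiplicity non-negative while producing the prescribed totals. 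Stacking these curls along the strand yields $f$ with $W(f)=w$ and $T(f)=t$, which is in general isotopy to the identity via the corresponding sequence of type~I moves.

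The main obstacle is the middle step: fixing the orientations of the cap and cup in each curl so that the gadget really is a well-formed endomorphism of $\uparrow$ in $\ROTang$ with a single self-crossing, and in particular verifying rigorously that a curl contributes $\pm2$ rather than $\pm1$ to the turning number, along with the fact that all four sign combinations, and no others, are realized. This factor of two, which stems from the paper's convention of counting each cap and cup separately in $T$, is exactly what turns the governing congruence into $2w+t\equiv 0 \pmod 4$; without it the constructed lattice would not match the stated hypothesis.
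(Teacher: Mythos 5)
Your proposal is correct and follows essentially the same route as the paper: the paper explicitly exhibits four curls $a,b,c,d$ with $(W,T)\in\{(+1,+2),(-1,-2),(+1,-2),(-1,+2)\}$ and composes $|p|$ and $|q|$ of them with $p=\tfrac{2w+t}{4}$ and $q=\tfrac{2w-t}{4}$, which are exactly your multiplicities. The ``main obstacle'' you flag is handled in the paper simply by writing down the four cap--braid--cup composites with explicit cap/cup orientations and reading off their invariants.
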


\begin{proof}
  We first define the following morphisms in $\ROTang(\uparrow, \uparrow)$:
  \begin{align*}
    a =
    \begin{tikzpicture}[scale=.4,baseline=-1cm]
      \begin{stringdiagram}{1}
        \capright{0}
        \nbraidslice{1}
        \cupleft{1}
      \end{stringdiagram}
    \end{tikzpicture}
    & &
    b =
    \begin{tikzpicture}[scale=.4,baseline=-1cm]
      \begin{stringdiagram}{1}
        \capleft{1}
        \pbraidslice{0}
        \cupright{0}
      \end{stringdiagram}
    \end{tikzpicture}
  & &
  c =
  \begin{tikzpicture}[scale=.4,baseline=-1cm]
    \begin{stringdiagram}{1}
      \capleft{1}
      \nbraidslice{0}
      \cupright{0}
    \end{stringdiagram}
  \end{tikzpicture}
  & &
  d =
    \begin{tikzpicture}[scale=.4,baseline=-1cm]
    \begin{stringdiagram}{1}
      \capright{0}
      \pbraidslice{1}
      \cupleft{1}
    \end{stringdiagram}
    \end{tikzpicture}
  \end{align*}
  They have the following invariants:
  \begin{align*}
    W(a) = +1 & & W(b) = -1 & & W(c) = +1 & & W(d) = -1 \\
    T(a) = +2 & & T(b) = -2 & & T(c) = -2 & & T(d) = +2
  \end{align*}
  Let $w, t \in \mathbb{Z}$ such that $2w + t$ is a multiple of $4$.
  We construct the required morphism $f \in \ROTang(\uparrow, \uparrow)$
  by composition of $a$, $b$, $c$ and $d$ using the fact that $W(g \circ h) = W(g) + W(h)$
  and $T(g \circ h) = T(g) + T(h)$ for all $g, h \in \ROTang(\uparrow, \uparrow)$.
  Let $p = \frac{2w + t}{4}$. If $p$ is positive, we start by $p$ copies of $a$,
  otherwise $-p$ copies of $b$. Then, let $q = \frac{2w - t}{4}$. If $q$ is positive, we continue
  with $q$ copies of $c$, otherwise $-q$ copies of $d$.
  One can check that the composite has the required writhe and turning number.
\end{proof}

\begin{corollary} \label{coro:rotang-unknot}
  The general isotopy problem for knots can be reduced to the word problem for $\ROTang$.
\end{corollary}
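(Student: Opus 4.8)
The plan is to reduce the general isotopy problem for knots to the word problem for $\ROTang$, using Theorem~\ref{thm:trace} to bridge the two notions of isotopy, and Lemma~\ref{lemma:writhe-turning-straight} to correct the discrepancy in framing and orientation data. Given an instance of the general isotopy problem, namely two knot diagrams $K_1$ and $K_2$, I would first fix an orientation on each and read them off as morphisms $f_1, f_2 \in \ROTang(I, I)$, which is always possible by the discussion following Theorem~\ref{thm:rotang}. The difficulty is that equality in $\ROTang$ is \emph{regular} isotopy (no Reidemeister~I), whereas general isotopy permits Reidemeister~I; so $f_1 = f_2$ in $\ROTang$ is a strictly stronger condition than $K_1$ being generally isotopic to $K_2$, and we cannot simply compare $f_1$ and $f_2$ directly.

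The key idea is to use the invariants $W$ and $T$ to detect exactly the failure of regular isotopy to match general isotopy. By Theorem~\ref{thm:trace}, two knots $f, g \in \ROTang$ are generally isotopic if and only if $W(f) = W(g)$, $T(f) = T(g)$, and $f$ and $g$ are regularly isotopic. Since $W$ and $T$ are computable directly from the diagrams, I would compute $w_i = W(f_i)$ and $t_i = T(f_i)$ and then \emph{normalise} both diagrams to a common target writhe and turning number by inserting correction morphisms on a single chosen strand. Concretely, to adjust $f_1$ so that its writhe and turning number match those of $f_2$, I need a morphism in $\ROTang(\uparrow, \uparrow)$ with writhe $w_2 - w_1$ and turning number $t_2 - t_1$ that is \emph{generally} isotopic to the identity; inserting such a morphism into a strand of $f_1$ changes its writhe and turning number by the prescribed amounts while preserving its general isotopy class.

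This is exactly what Lemma~\ref{lemma:writhe-turning-straight} provides, \emph{provided} the parity condition $2(w_2 - w_1) + (t_2 - t_1) \equiv 0 \pmod 4$ holds. The main obstacle, and the step deserving the most care, is verifying that this congruence is automatically satisfied. I expect this to follow from the fact that for any knot (single connected component) in $\ROTang$ the quantity $2W + T$ is constrained modulo $4$: caps, cups, and braidings each contribute to $W$ and $T$ in a correlated way, and for a closed single-component diagram these contributions are forced into a fixed residue class. Once this invariant is established, $2(w_2 - w_1) + (t_2 - t_1)$ is a multiple of $4$, Lemma~\ref{lemma:writhe-turning-straight} applies, and after inserting the correction morphism into $f_1$ we obtain a morphism $f_1'$ with $W(f_1') = W(f_2)$, $T(f_1') = T(f_2)$, and $f_1'$ generally isotopic to $f_1$. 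By Theorem~\ref{thm:trace}, $K_1$ and $K_2$ are generally isotopic if and only if $f_1' = f_2$ in $\ROTang$, which is precisely an instance of the word problem for $\ROTang$. Since all the data computed ($W$, $T$, and the correction morphisms built from $a,b,c,d$) are of size polynomial in the input diagrams, the reduction is effective, completing the argument.
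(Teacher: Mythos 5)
Your proposal follows essentially the same route as the paper: orient both diagrams, compute $W$ and $T$, use Lemma~\ref{lemma:writhe-turning-straight} to build a correction morphism generally isotopic to the identity, insert it into one knot to equalise the invariants, and conclude via Theorem~\ref{thm:trace}. The one step you leave open --- that $2(W(f)-W(g)) + (T(f)-T(g))$ is automatically a multiple of $4$ --- is exactly the fact the paper imports from \cite{trace1983reidemeister}, namely that $\frac{2W(f)+T(f)}{2}$ is odd for every oriented knot, so your argument is complete once that citation is supplied.
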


\begin{proof}
  Given two knots $k, l$ represented as crossing diagrams in the plane,
  pick an orientation for them and turn them into morphisms
  $f, g \in \ROTang$.
  We can compute the writhe and turning number of $f$ and $g$ in polynomial time.
  
  As noted by \cite{trace1983reidemeister}, for any oriented knot $f$, $\frac{2W(f) + T(f)}{2}$ is odd.
  In other words there are $p, q \in \mathbb{Z}$ such that $2W(f) + T(f) = 4p + 2$
  and $2W(g) + T(g) = 4q + 2$. Therefore $2(W(f) - W(g)) + (T(f) - T(g)) = 4(p-q)$.
  By Lemma~\ref{lemma:writhe-turning-straight}, we can therefore construct
  a morphism $h \in \ROTang(\uparrow, \uparrow)$ such that $W(h) = W(f) - W(g)$
  and $T(h) = T(f) - T(g)$, and such that $h$ can be related by a general isotopy to a straight wire
  (so, allowing Reidemeister type I moves).

  Therefore we can insert $h$ on any strand of $g$, obtaining a morphism $g'$ which
  represents the same knot as $g$, such that $W(g) = W(f)$ and $T(g) = T(f)$. 
  By Theorem~\ref{thm:trace}, $f$ and $g'$ are in general isotopy as knots if and only if
  they are equal as morphisms of $\ROTang$. This completes the proof.
\end{proof}

\section{Reducing the unknotting problem to the braided monoidal word problem}

So far, Corollary~\ref{coro:rotang-unknot} only reduces the unknot problem to the word
problem for $\ROTang$ while our goal is to reduce it to the word problem for braided monoidal
categories. The category $\ROTang$ can be presented as a free braided monoidal category
but that requires additional equations between the generators representing the caps and cups.
In this section, we show how these equations can be eliminated too. We call \emph{unknot diagram} any knot diagram which is isotopic to the unknot.

\begin{definition}
  The category $\CC$ is the free braided monoidal category generated by objects $\{ \uparrow,
  \downarrow \}$ and morphisms $\{ \scapl, \scapr, \scupl, \scupr \}$.
\end{definition}
It is important to note that no equations are imposed between the morphism generators, unlike in
$\ROTang$. Therefore, there exists a functor from $\CC$ to $\ROTang$, mapping the generators
of $\CC$ to the corresponding units and counits in $\ROTang$, but the reverse mapping would not be
functorial.

\subsection{Cap-cup cycles}

In this section we introduce a more precise invariant than the
turning number: the sequence of caps and cups encountered while
following the strand of a knot.

\begin{definition}
  A \textbf{cap-cup cycle} is a finite sequence of elements in $\{ \scapl, \scapr, \scupl, \scupr \}$ considered
  up to cyclic permutation in which caps and cups alternate.
  The turning number of a cap-cup cycle is the sum of the turning numbers of its elements,
  defined as in Definition~\ref{def:turning-number}.
\end{definition}

The cap-cup cycle is intended to replace the turning number in a context where eliminating caps and cups using the adjunction equations is
not allowed.

\begin{definition}
  Given a knot $f \in \CC$,
  its cap-cup cycle $\ccc(f)$ can be obtained by starting from any strand in $f$, following
  it in the direction indicated by its type and recording all the caps and cups encountered
  until one travels back to the starting point.
  This cycle is invariant under all axioms of a braided monoidal category.
\end{definition}

\noindent For instance, the knot of Figure~\ref{fig:example-knot-rotang} has cap-cup cycle $(\scapl,\scupr,\scapr,\scupl)$.

\begin{lemma}
  For all knot diagrams $f \in \CC$, $\ccc(f)$ is of even length, and $T(\ccc(f)) = T(f)$.
\end{lemma}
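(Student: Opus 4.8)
The statement has two parts: that $\ccc(f)$ has even length, and that $T(\ccc(f)) = T(f)$. I would prove both by examining the structure of the cap-cup cycle as one follows a single strand of the knot around its connected component.

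For the \emph{even length} claim, the plan is to observe that the cap-cup cycle alternates between caps and cups by definition. As one traverses the strand starting just after a cap, the orientation of the wire is determined by the cap's type; the next critical point encountered along the strand must be a cup (one cannot encounter two caps consecutively without a cup in between, because between a cap and the next critical point the strand is monotone in the vertical direction). Thus the sequence reads $\scap, \scup, \scap, \scup, \dots$ and since it closes up into a cycle (we return to the starting point), the number of caps must equal the number of cups, so the total length $2k$ is even. I would phrase this in terms of the alternation condition already built into the definition of a cap-cup cycle, noting that a genuine closed strand forces the alternation to be consistent all the way around.

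For the \emph{turning number} claim $T(\ccc(f)) = T(f)$, the key is that $T(f)$ is defined (Definition~\ref{def:turning-number}) as the sum of local turning numbers over all caps and cups in $f$, and $T(\ccc(f))$ is defined as the sum of the turning numbers of the elements of the cycle. The plan is simply to argue that these two sums range over exactly the same multiset of critical points. Since $f$ is a knot, it has a single connected component, so following the strand from any starting point visits every cap and every cup exactly once before returning. Hence $\ccc(f)$ records each cap and cup of $f$ precisely once, and summing their local turning numbers gives the same value in both cases. This is essentially a bookkeeping observation once connectivity is invoked.

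The main obstacle, and the point that deserves the most care, is justifying that the alternation of caps and cups along the strand is forced — i.e.\ that one genuinely cannot traverse two caps (or two cups) in immediate succession. This relies on orienting the strand and tracking the direction of travel: after a cap the strand is heading downward (in the two possible ways), and the next critical point where the vertical direction reverses must be a cup, and vice versa. I would make this precise by considering the local maxima and minima of the height function along the strand, identifying caps with the points where the strand turns from upward to downward travel and cups with the reverse, so that they necessarily alternate along any connected oriented strand. Once this is established, both parts of the lemma follow immediately.
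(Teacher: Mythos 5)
Your proposal is correct. Note that the paper itself states this lemma without any proof, evidently regarding it as immediate from the definitions, so there is no official argument to compare against; your write-up supplies exactly the details that are being taken for granted. The two key points you isolate are the right ones: (i) in $\CC$ the only generators are caps, cups and braidings, and braidings preserve the vertical direction of travel along a strand, so the direction reverses precisely at caps (local maxima) and cups (local minima), forcing them to alternate and hence to occur in equal numbers around a closed cycle; (ii) since a knot has a single connected component, the traversal defining $\ccc(f)$ visits every cap and cup of $f$ exactly once, so $T(\ccc(f))$ and $T(f)$ are sums of the same multiset of local turning numbers. The only presentational caveat is that the alternation property is already baked into the paper's definition of a cap-cup cycle, so your careful justification of alternation is really a proof that $\ccc(f)$ is well defined as a cap-cup cycle at all -- a point worth making explicit, but not a gap.
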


\begin{lemma} \label{lemma:cap-cup-cycle-realization}
  For all cap-cup cycles $c$ such that $T(c) = \pm 2$, one can construct
  a knot diagram $f \in \CC$ without any crossings, such that $\ccc(f) = c$.
\end{lemma}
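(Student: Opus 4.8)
The plan is to prove Lemma~\ref{lemma:cap-cup-cycle-realization} by an explicit geometric construction, building the desired crossing-free diagram directly from the combinatorial data of the cap-cup cycle $c$. Recall that a cap-cup cycle is a cyclic sequence in which caps and cups strictly alternate, so $c$ has even length $2m$ and is of the form $(\text{cap}_1, \text{cup}_1, \text{cap}_2, \text{cup}_2, \dots, \text{cap}_m, \text{cup}_m)$, where each cap is one of $\scapl, \scapr$ and each cup is one of $\scupl, \scupr$. A crossing-free diagram of a single strand is exactly a planar embedding of a circle decorated with these turning features, so realizing $c$ amounts to laying out $m$ caps and $m$ cups in a plane and wiring them together into a single loop that reads off $c$ when traversed, while keeping the total turning number constrained to $\pm 2$.

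First I would set up the correspondence between orientations and turning numbers using Definition~\ref{def:turning-number}: each cap contributes $+1$ (for $\scapr$) or $-1$ (for $\scapl$), and each cup contributes $-1$ (for $\scupr$) or $+1$ (for $\scupl$). The hypothesis $T(c) = \pm 2$ means the signed contributions sum to $\pm 2$. Since a crossing-free loop in the plane is, as an unoriented curve, just an unknot, the only genuine constraint to honor is the orientation data encoded in which caps and cups are left- versus right-handed; everything else is free to be routed planarly. The natural strategy is therefore induction on $m$, or equivalently a direct stacking construction: I would arrange the $m$ caps at the top and the $m$ cups at the bottom, connecting cap $i$ to cup $i$ and cup $i$ to cap $i{+}1$ (cyclically) by crossing-free strands that run monotonically, exactly as the multi-cap, multi-cup diagrams already appearing in Figure~\ref{fig:example-knot-rotang} and the first diagram of Figure~\ref{fig:example-knot} are assembled. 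Concretely, using the \verb|\startslice|/\verb|\finishslice| machinery one can place a row of caps opening downward and a row of cups closing upward, then nest or juxtapose them so that the free endpoints match up into one connected component realizing precisely the cyclic word $c$.

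The main obstacle I expect is orientation-compatibility at the joins: when I connect the output of one cap/cup to the input of the next along the loop, the two strand ends must carry matching orientations ($\uparrow$ meets $\uparrow$, $\downarrow$ meets $\downarrow$), and this must remain consistent all the way around the cycle so that a single coherent orientation exists on the whole loop. This is where the $T(c) = \pm 2$ hypothesis does real work, rather than being incidental: the alternation of caps and cups already forces the strand to change between up and down orientation the right number of times, and the turning-number value $\pm 2$ is exactly the condition under which the orientations close up consistently into a single planar simple loop (a curve that is, up to isotopy, the standard oriented circle, whose turning number is $\pm 2$ by the Whitney–Graustein count of caps and cups). I would verify this by checking that each adjacent cap-cup pair in $c$ presents compatible orientations at the shared endpoint, appealing to the orientation labels fixed in the convention diagrams above; the alternation property guarantees locally matching endpoints, and the global turning constraint guarantees that no orientation clash is forced by the cyclic closure.

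I would then conclude by confirming the two required properties of the constructed $f$. That $f$ has no crossings is immediate from the construction, since every connecting strand is routed monotonically in the plane without any \verb|\pbraidslice| or \verb|\nbraidslice|. That $\ccc(f) = c$ follows by tracing the single strand of $f$ in the direction of its orientation and reading off the caps and cups in the order they were placed, which by construction reproduces the cyclic word $c$; connectedness of the loop is what makes $f$ a genuine knot diagram in the sense of the earlier definition, and it is ensured by the cyclic wiring cap$_i \to$ cup$_i \to$ cap$_{i+1}$ that visits every feature exactly once before returning to the start. This completes the realization.
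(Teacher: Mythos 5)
There is a genuine gap here. Your central construction --- place the $m$ caps in a row at the top, the $m$ cups in a row at the bottom, and join them by monotone strands in the cyclic order cap$_i \to$ cup$_i \to$ cap$_{i+1}$ --- does not in general produce a crossing-free diagram. That layout is essentially a bridge position, and bridge-position diagrams of even the unknot typically require crossings in the middle band; a crossing-free realization generally needs the caps and cups to be \emph{nested} in a specific pattern determined by the combinatorics of $c$. You acknowledge this by saying one should ``nest or juxtapose them so that the free endpoints match up,'' but deciding how to nest is precisely the content of the lemma, and you give no procedure for it. The paper supplies exactly this missing engine: an induction on $|c|$ whose key step is to locate two \emph{adjacent} elements $x, y$ of $c$ with $T(x) = +1$ and $T(y) = -1$ (such a pair must exist when $|c| > 2$ and $T(c) = \pm 2$, since otherwise a propagation argument forces every element to have the same sign and hence $|T(c)| = |c| > 2$), delete them to get a shorter alternating cycle $c'$ with $T(c') = T(c)$, realize $c'$ crossing-free by induction, and re-insert the deleted pair as a zig-zag at the deletion point --- an operation that manifestly introduces no crossings. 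Without this (or an equivalent nesting rule), your construction does not go through.

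A secondary but related error: you locate the use of the hypothesis $T(c) = \pm 2$ in ``orientation compatibility at the joins,'' claiming it is the condition under which orientations close up into a coherent loop. In fact orientation compatibility is automatic for \emph{any} alternating cap-cup cycle: one always leaves a cap travelling downward on a $\downarrow$-typed wire and always enters a cup on a $\downarrow$-typed input, so every cap can be followed by either cup and vice versa, and the cyclic closure never produces an orientation clash --- a cycle with $T(c) = 0$ still closes up into a perfectly well-oriented immersed curve, it just cannot be drawn without crossings. The hypothesis $T(c) = \pm 2$ is doing embeddability work (it is the Whitney--Graustein-type obstruction to a crossing-free planar realization), and in the paper's proof it is consumed precisely in the existence argument for the adjacent $+1/-1$ pair that drives the induction. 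You should redirect the hypothesis to that step and replace the stacking construction with the inductive zig-zag insertion.
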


\begin{proof}
  By induction on the length of the cycle $c$.
  If $|c| = 2$, then $c = ( \scapr, \scupl )$ or $c = ( \scapl, \scupr )$,
  both of which can be realized by the composite of both elements.
  If $|c| > 2$, then there is an element $x \in c$ such that $T(x) = +1$
  and another element $y \in c$ with $T(y) = -1$. One can also assume that they
  are adjacent in $c$ (if all elements $x \in c$ with $T(x) = +1$ are such that the elements on their left and right also have a positive turning number, then by propagating this, all elements in the cycle have a positive turning number, which is a contradiction). Consider the cycle $c'$ obtained by removing $x$ and $y$
  from $c$. By induction, construct a knot diagram $f' \in \CC$ such that
  the cap-cup cycle of $f'$ is $c'$. Now, at the point where we removed $x$ and $y$,
  we can insert in $f'$ a zig-zag corresponding to $x$ and $y$ (in the order they
  appeared in $c$), which gives us the required knot.
\end{proof}

To generalize this lemma to knot diagrams with crossings, we introduce a new
notion of cap-cup cycle where each cap or cup can carry its own writhe.

\begin{definition}
  The set of \textbf{twisted cap-cups} is  $\mathbb{T} \coloneqq \{ \scapl, \scapr, \scupl, \scupr \} \times \mathbb{Z}$.
\end{definition}

We think of a pair $(c,w) \in \mathbb{T}$ as a cap-cup $c$ composed with braids such that the writhe of the resulting morphism is $w$. Figure~\ref{fig:twisted-cap-cups} gives a few examples of twisted cap-cups.

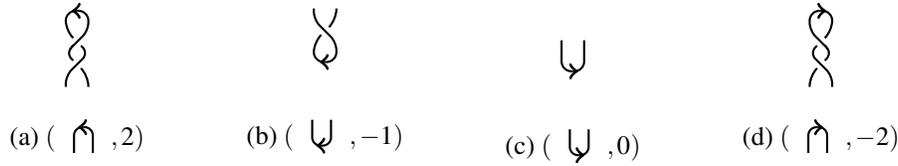
\begin{figure}
  \centering
  \begin{subfigure}{0.2\textwidth}
    \centering
    \begin{tikzpicture}[scale=0.3]
      \begin{stringdiagram}{0}
        \capleft{0}
        \nbraidslice{0}
        \nbraidslice{0}
      \end{stringdiagram}
    \end{tikzpicture}
    \caption{$(\scapl, 2)$}
  \end{subfigure}
  \begin{subfigure}{0.2\textwidth}
    \centering
    \begin{tikzpicture}[scale=0.3]
      \begin{stringdiagram}{2}
        \pbraidslice{0}
        \cupleft{0}
      \end{stringdiagram}
    \end{tikzpicture}
    \caption{$(\scupl, -1)$}
  \end{subfigure}
    \begin{subfigure}{0.2\textwidth}
      \centering
      \vspace{.55cm}
    \begin{tikzpicture}[scale=0.3]
      \begin{stringdiagram}{2}
        \cupright{0}
      \end{stringdiagram}
    \end{tikzpicture}
    \caption{$(\scupr, 0)$}
    \end{subfigure}
    \begin{subfigure}{0.2\textwidth}
    \centering
    \begin{tikzpicture}[scale=0.3]
      \begin{stringdiagram}{0}
        \capright{0}
        \pbraidslice{0}
        \pbraidslice{0}
      \end{stringdiagram}
    \end{tikzpicture}
    \caption{$(\scapr, -2)$}
  \end{subfigure}
  \caption{Examples of twisted cap-cups}
  \label{fig:twisted-cap-cups}
\end{figure}

\begin{definition}
  The \textbf{turning number} of a twisted cap-cup $(c,w) \in \mathbb{T}$ is
  defined as $T((c,w)) = (-1)^{|w|} t(c)$.
  The \textbf{writhe} of a twisted cap-cup is $W((c,w)) = w$.
  The \textbf{signature} of a twisted cap-cup is $S((c,w)) = c$ if $w$ is even,
  and $c$ with a flipped wire orientation if $w$ is odd.
\end{definition}

The signature of a twisted cap-cup is essentially obtained by applying Reidemeister type I moves
to the twisted cap-cup until no crossing remains. Therefore this preserves the domain and codomain of the morphism.
  
\begin{definition}
  A \textbf{twisted cap-cup cycle} is a finite sequence of twisted cap-cups
  up to cyclic permutation in which caps and cups alternate.
  The turning number of a cap-cup cycle is the sum of the turning numbers of its elements,
  and similarly for its writhe.
\end{definition}

\begin{definition}
  Given a twisted cap-cup cycle $c$ we define a cap-cup cycle $U(c)$ obtained by forgetting
  the writhe component in each twisted cap-cup. We also define a cap-cup cycle $S(c)$ obtained
  by taking the signature of each twisted cap-cup in the cycle.
\end{definition}

\begin{lemma} \label{lemma:twisted-cap-cup-cycle-realization}
  Let $c$ be a twisted cap-cup cycle such that $T(c) = \pm 2$. One can construct an unknot diagram $R(c) \in \CC$ such that $W(R(c)) = W(c)$ and $\ccc(R(c)) = U(c)$.
\end{lemma}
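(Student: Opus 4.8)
The plan is to lift the crossing-free construction of Lemma~\ref{lemma:cap-cup-cycle-realization} to the twisted setting. I would first realize the \emph{signature} cycle $S(c)$ without any crossings, and then splice the actual twisted cap-cups back in, one position at a time, exploiting the fact that taking the signature leaves the domain and codomain of each generator unchanged.

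The first step is to check that $S(c)$ is a legitimate input for Lemma~\ref{lemma:cap-cup-cycle-realization}, i.e.\ that $T(S(c)) = \pm 2$. For a single twisted cap-cup, flipping the wire orientation negates the local turning number, and by definition the signature flips exactly when the writhe is odd; hence $T(S((c_i,w_i))) = (-1)^{|w_i|} t(c_i) = T((c_i,w_i))$. Summing over the cycle gives $T(S(c)) = T(c) = \pm 2$. Since flipping an orientation sends caps to caps and cups to cups, $S(c)$ also alternates caps and cups, so it is a genuine cap-cup cycle. Applying Lemma~\ref{lemma:cap-cup-cycle-realization} then yields a crossing-free diagram $f \in \CC$, a single unknotted loop, with $\ccc(f) = S(c)$; thus the cap or cup of $f$ at position $i$ has type $S((c_i,w_i))$.

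The second step is to restore the writhes. By the remark following the definition of the signature, $S((c_i,w_i))$ and the twisted cap-cup $(c_i,w_i)$ have the same domain and codomain, so I may replace, in place, each crossing-free generator of $f$ by the corresponding twisted cap-cup $(c_i,w_i)$ without disturbing how the strands connect. This produces a well-formed diagram $R(c) \in \CC$. Because each twisted cap-cup $(c_i,w_i)$ is related to its signature $S((c_i,w_i))$ by Reidemeister type I moves, undoing all of them transforms $R(c)$ back into $f$; as type I moves are permitted under general isotopy and $f$ is the unknot, $R(c)$ is an unknot diagram.

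It remains to read off the two invariants. The local replacements are supported on the two strands of each cap-cup, so $R(c)$ is still a single loop and its cap or cup at position $i$ now has the underlying type $c_i$; following the strand therefore records the cyclic sequence $(c_1, c_2, \dots) = U(c)$, giving $\ccc(R(c)) = U(c)$. The crossing-free skeleton $f$ contributes no writhe, so all writhe of $R(c)$ comes from the inserted twisted cap-cups, whence $W(R(c)) = \sum_i w_i = W(c)$. The one genuinely delicate point is the turning-number bookkeeping of the first step, where the parity of each $w_i$ must be matched against the orientation flips introduced by the signature; everything else follows directly from the domain/codomain invariance of the signature and from Reidemeister~I being a general isotopy.
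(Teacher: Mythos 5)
Your proposal is correct and follows essentially the same route as the paper: verify that $T(S(c)) = T(c)$, apply Lemma~\ref{lemma:cap-cup-cycle-realization} to the signature cycle $S(c)$, and then substitute each crossing-free cap or cup by its corresponding twisted cap-cup, which is legitimate because taking the signature preserves domains and codomains. You supply slightly more detail than the paper does (the parity computation behind $T(S(c)) = T(c)$, and the observation that undoing the twists by Reidemeister~I moves shows $R(c)$ is still an unknot diagram), but the argument is the same.
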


\begin{proof}
  First, notice that for all twisted cap-cup cycle $c$, $T(S(c)) = T(c)$.
  So if $T(c) = \pm 2$ then $T(S(c)) = \pm 2$ and we can apply Lemma~\ref{lemma:cap-cup-cycle-realization} to $S(c)$, obtaining a morphism $f$ such that $\ccc(f) = S(c)$.
  Now we obtain another knot diagram $R(c)$ by replacing each cap and cup of $f$ by the twisted cap-cup in $c$ it was generated from. This is possible because taking the signature of a twisted cap-cup preserves the domain and codomain of the corresponding morphism.
  We therefore obtain $W(R(c)) = W(c)$ and $\ccc(R(c)) = U(c)$ as required.
\end{proof}

\begin{lemma} \label{lemma:cap-cup-cycle-writhe-realization}
  Let $c$ be a cap-cup cycle and $w \in \mathbb{Z}$ be such that $w + \frac{T(c)}{2}$ is odd.
  Then we can construct an unknot diagram $f(c,w) \in \CC$ such that $W(f(c,w)) = w$ and $\ccc(f(c,w)) = c$.
\end{lemma}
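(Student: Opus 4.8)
The plan is to realize $f(c,w)$ by lifting the cap-cup cycle $c$ to a \emph{twisted} cap-cup cycle $\tilde c$ and then appealing to Lemma~\ref{lemma:twisted-cap-cup-cycle-realization}. Concretely, write $c = (c_1, \dots, c_n)$ and assign to each element an integer writhe $w_i$, producing the twisted cap-cup cycle $\tilde c = ((c_1,w_1), \dots, (c_n, w_n))$. By construction $U(\tilde c) = c$, since forgetting the writhes recovers the original cycle, and $\tilde c$ still alternates caps and cups because assigning writhes does not change the underlying cap/cup type. If we can arrange that $\sum_i w_i = w$ and that $T(\tilde c) = \pm 2$, then Lemma~\ref{lemma:twisted-cap-cup-cycle-realization} produces an unknot diagram $R(\tilde c) \in \CC$ with $W(R(\tilde c)) = W(\tilde c) = w$ and $\ccc(R(\tilde c)) = U(\tilde c) = c$; setting $f(c,w) = R(\tilde c)$ finishes the argument. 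So the whole problem reduces to choosing the writhes $w_i$.

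The choice of writhes matters only through two parities. Writing $t(c_i) \in \{+1,-1\}$ for the local turning number of $c_i$ and $F = \{ i : w_i \text{ odd}\}$, the definition $T((c_i,w_i)) = (-1)^{|w_i|} t(c_i)$ gives
\[
  T(\tilde c) = \sum_i (-1)^{|w_i|} t(c_i) = T(c) - 2 \sum_{i \in F} t(c_i),
\]
while $W(\tilde c) = \sum_i w_i \equiv |F| \pmod 2$. Thus $\tilde c$ is determined, as far as its turning number and writhe parity are concerned, by the subset $F$ alone; and since each $t(c_i) = \pm 1$, we have $\sum_{i\in F} t(c_i) \equiv |F| \pmod 2$.

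I would aim for $T(\tilde c) = +2$, i.e.\ $\sum_{i \in F} t(c_i) = T(c)/2 - 1$. Letting $p$ and $m$ denote the number of elements with $t(c_i) = +1$ and $t(c_i) = -1$ respectively, a subset sum $\sum_{i\in F} t(c_i)$ can realise any integer in $[-m, p]$, and a short calculation using $n = p + m \ge 2$ shows $T(c)/2 - 1 = (p-m)/2 - 1$ lies in this interval; hence a suitable $F$ exists. For any such $F$ the congruence $\sum_{i\in F} t(c_i) \equiv |F| \pmod 2$ forces $|F| \equiv T(c)/2 - 1 \pmod 2$, and the hypothesis that $w + T(c)/2$ is odd is exactly the statement that $T(c)/2 - 1 \equiv w \pmod 2$, so automatically $|F| \equiv w \pmod 2$. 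It then remains to fix the actual integers: set $w_i = 1$ for $i \in F$ and $w_i = 0$ otherwise, so that $\sum_i w_i = |F| \equiv w \pmod 2$, and add the even quantity $w - |F|$ to the writhe of a single element (there is one, as $n \ge 2$), preserving all parities while making $\sum_i w_i = w$ exactly.

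The main obstacle is the combinatorial bookkeeping of the previous paragraph: one must verify that the single numerical hypothesis that $w + T(c)/2$ is odd simultaneously guarantees both that the target turning number $\pm 2$ is attainable by flipping orientations (a subset-sum condition) and that the number of flips has the correct parity to reach total writhe $w$. Everything else is assembling Lemma~\ref{lemma:twisted-cap-cup-cycle-realization} around this choice; once $\tilde c$ is fixed with $U(\tilde c) = c$, $W(\tilde c) = w$ and $T(\tilde c) = 2$, the conclusion is immediate.
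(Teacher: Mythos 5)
Your proof is correct and reaches the conclusion by the same overall strategy as the paper: lift $c$ to a twisted cap-cup cycle whose writhe components encode both the target total writhe $w$ and the correction needed to bring the turning number to $\pm 2$, then invoke Lemma~\ref{lemma:twisted-cap-cup-cycle-realization}. Where you genuinely differ is in how the writhe assignment is produced. The paper argues by cases on $T(c)$ (equal to $\pm 2$, equal to $0$, or of absolute value greater than $2$) and, in the last case, by induction: it repeatedly picks two elements of local turning number $+1$ and twists them by $+1$ and $-1$ respectively, lowering $|T|$ by $4$ per step until a base case applies, after which all of $w$ is placed on a single element. You instead observe that only the parities of the local writhes matter for the turning number, reduce everything to choosing the set $F$ of odd-writhe positions with $\sum_{i\in F} t(c_i) = T(c)/2 - 1$, and settle existence of $F$ by a one-shot subset-sum argument on the interval $[-m,p]$ (valid because cap-cup cycles have even length at least $2$); the hypothesis that $w + T(c)/2$ is odd then forces $|F| \equiv w \pmod 2$, so the leftover writhe $w - |F|$ is even and can be absorbed on one element without flipping any turning number. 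Your route is uniform and non-inductive, and it isolates precisely where the parity hypothesis is used; the paper's route is more explicitly constructive about which elements get twisted. Both arguments are sound, and the reductions to Lemma~\ref{lemma:twisted-cap-cup-cycle-realization} agree.
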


\begin{proof}
  We can view $c$ as a twisted cap-cup cycle where all the writhe components are null.
  We will transform $c$ to incorporate the writhe $w$ in the writhe components of the cycle.

  First, consider the case where $T(c) = \pm 2$. By assumption, $w$ is therefore even.
  We can pick any element $(a,b)$ of $c$ and replace it by $(a,b+w)$, giving us a new twisted cap-cup cycle $c'$. We have $T(c') = T(c) = \pm 2$ so we can apply Lemma~\ref{lemma:twisted-cap-cup-cycle-realization}, giving the required morphism $R(c') \eqqcolon f(c,w)$.

  Second, if $T(c) = 0$. By assumption, $w$ is odd. Again, take any
  element $(a,b)$ in $c$ and replace it by $(a,b+w)$. This changes the turning number of that element,
  negating its sign. Therefore the turning number of the new twisted cap-cup cycle is $\pm 2$, and we
  are back to the previous case.

  Third, if $|T(c)| > 2$. By symmetry let us assume $T(c) > 2$. We work by induction on $T(c)$.
  There are at least two elements of $c$ with turning number $+1$, let them be $(a,b)$ and $(a',b')$.
  We replace them by $(a,b+1)$ and $(a',b'-1)$ respectively. We have $t((a,b+1)) = t((a',b'-1)) = -1$ so this reduces the turning number by $4$, keeps the writhe unchanged and keeps $U(c)$ unchanged.
  So we can obtain the required morphism by induction.
\end{proof}

The following lemma establishes that the way the writhe is spread on the elements of a twisted cap-cup cycle does not actually matter. The writhe can be transferred between any two elements without resorting to Reidemeister I or zig-zag elimination.
\begin{lemma} \label{lemma:writhe-equalizing}
  Let $c, c'$ be twisted cap-cup cycles such that $W(c) = W(c')$, $T(c) = T(c') = \pm 2$ and $U(c) = U(c')$. Then $R(c)$ is isotopic to $R(c')$ via the axioms of braided monoidal categories.
\end{lemma}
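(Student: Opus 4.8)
The plan is to prove the statement by showing that the \emph{distribution} of writhe among the elements of a twisted cap-cup cycle is immaterial: in the unknot diagram $R(c)$ the braidings that carry the writhe can be slid freely around the single loop component, so up to the braided monoidal axioms $R(c)$ depends only on the underlying skeleton $U(c)$ and the total writhe $W(c)$. Since the hypotheses give $U(c)=U(c')$ and $W(c)=W(c')$, I would reduce both $R(c)$ and $R(c')$ by braided isotopy to one and the same standard diagram, from which the equality $R(c)\cong R(c')$ is immediate. Crucially, I will manipulate the diagrams directly and never pass through $R$ of an intermediate cycle, so the constraint $T=\pm2$ (needed only to \emph{define} $R$ via Lemma~\ref{lemma:twisted-cap-cup-cycle-realization}) is required of the endpoints alone.

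Recall that $R(c)$ is built from the crossingless realization $f$ of $S(c)$ supplied by Lemma~\ref{lemma:cap-cup-cycle-realization} by reinstating, at each cap or cup, the braidings producing its writhe. Hence every crossing of $R(c)$ sits in one of these local twist regions, while the remainder is a crossingless family of caps and cups closing into a single loop. The key lemma I would isolate is an \emph{elementary transport move}: a braiding lying in the twist region of one twisted cap-cup can be moved, using only the axioms of a braided monoidal category, into the twist region of a cyclically adjacent element. Because the loop is connected, iterating this move lets me gather all of $W(c)$ into one fixed region, yielding a diagram that manifestly depends only on $U(c)$ and $W(c)$; applying the same procedure to $R(c')$ lands on the identical standard form.

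To establish the transport move I would use naturality of the braiding. Two cyclically adjacent elements are a cap and a cup joined by an arc of the loop, so the local picture is a zig-zag carrying a single crossing on the strands beside the cap. Sliding that crossing down the connecting arc is a planar isotopy until it reaches the extremum; to carry it past, I apply the pull-through move of Figure~\ref{fig:pull-through} (with the cap or cup as the morphism being pulled through), followed by a Reidemeister~II cancellation as in Figure~\ref{fig:reidemeister-2} to remove the auxiliary crossing this introduces, and a Reidemeister~III move where a third strand is involved. Passing the extremum reverses the orientation of the transported strand, and this reversal is precisely the wire flip recorded by the signature $S$ in the definition of $R$; it is exactly this flip that reconciles the skeleton of $R(c)$, built from $S(c)$, with the skeleton of $R(c')$, built from the possibly different $S(c')$. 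No Reidemeister~I move and no yanking equation is ever used, as required, since in $\CC$ the caps and cups are opaque generators.

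The main obstacle is this transport-past-an-extremum step. It demands a case analysis over the four generators $\scapl,\scapr,\scupl,\scupr$ and the two signs of braiding, verifying in each case that naturality together with Reidemeister~II and~III genuinely relates the two placements inside $\CC$, where no adjunction equation is available to simplify the cap--cup interactions. One must also check that the sign of the transported crossing is preserved, so that gathering the crossings yields writhe exactly $W(c)$ and not something smaller; this is forced a posteriori by the invariance of $W$ under the braided axioms, but I would verify it directly to confirm the move is legitimate, and simultaneously confirm that the orientation flips accumulated during transport assemble into the correct standard skeleton. Once the elementary transport move and this bookkeeping are in hand, the reduction of both $R(c)$ and $R(c')$ to the common standard form is routine and the lemma follows.
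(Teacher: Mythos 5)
Your proposal matches the paper's proof: the paper defines exactly your elementary transport move as a relation $\diamond$ that shifts one unit of writhe between two cyclically adjacent twisted cap-cups, proves that a $\diamond$ step is a braided monoidal isotopy by the same case analysis (an adjacent cap--cup zig-zag versus two elements separated by the rest of the knot, handled via naturality of the braiding and Reidemeister~II, never Reidemeister~I or yanking), and then concludes because $U(c)=U(c')$ and $W(c)=W(c')$ force $c$ and $c'$ to be connected by a chain of $\diamond$ steps. Your additional care about intermediate cycles possibly failing $T=\pm 2$ is a sensible refinement of the bookkeeping but does not change the substance of the argument.
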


\begin{proof}
  We define a relation $\diamond$ on twisted cap-cup cycles:
  $c \diamond c'$ when $c'$ can be obtained from $c$ by replacing two consecutive elements $(a,b), (c,d)$ by $(a,b-1), (c,d+1)$.

  Let $c, c'$ be twisted cap-cup cycles as in the lemma. We first
  show that if $c \diamond c'$ then $R(c)$ is isotopic to $R(c')$
  as a braided monoidal morphism.

  If $T((a,b)) = -T((c,d))$ then the sequence $(a,b),(c,d)$ is realized in $R(c)$ as follows, up to vertical and horizontal symmetries:
  \[
  \begin{tikzpicture}[scale=0.5,every node/.style={node distance=0.5cm}]
    \begin{stringdiagram}{1}
      \diagslice{0}{0}{2}
      \diagslice{1}{2}{0}
    \end{stringdiagram}
    \node[below of=input_1,node distance=0.2cm,rotate=20] {\tikz\draw[very thick,->] (0,0);};
    \node[above left of=v0] {$(a,b)$};
    \node[below right of=v1] {$(c,d)$};
  \end{tikzpicture}
  \]
  where the morphisms are composed of a single cap or cup, followed by braids to obtain the desired writhe.
  We have the following regular isotopy:
  \[
  \begin{tikzpicture}[scale=0.5,every node/.style={node distance=0.5cm,scale=0.75}]
  \begin{stringdiagram}{1}
    \diagslice{0}{0}{2}
    \diagslice{1}{2}{0}
  \end{stringdiagram}
  \node[above left of=v0] {$(a,b)$};
  \node[below right of=v1] {$(c,d)$};
  \node[below of=input_1,node distance=0.2cm,rotate=20] {\tikz\draw[very thick,->] (0,0);};
  \node at (2,-1) {$=$};

  \begin{scope}[xshift=4cm,yshift=0.5cm]
    \begin{stringdiagram}{1}
      \diagslice{0}{0}{2}
      \nbraidslice{0}
      \diagslice{1}{2}{0}
    \end{stringdiagram}
    \node[above left of=v0] {$(a,b-1)$};
    \node[below right of=v2] {$(c,d)$};
    \node[below of=input_1,node distance=0.2cm,rotate=20] {\tikz\draw[very thick,->] (0,0);};
    \node at (2,-1.5) {$=$};
  \end{scope}

  \begin{scope}[xshift=8cm,yshift=0.5cm]
    \begin{stringdiagram}{1}
      \diagslice{0}{0}{2}
      \pbraidslice{1}
      \diagslice{0}{2}{0}
    \end{stringdiagram}
    \node[above left of=v0] {$(a,b-1)$};
    \node[below left of=v2] {$(c,d)$};
    \node[below of=input_1,node distance=0.2cm,rotate=20] {\tikz\draw[very thick,->] (0,0);};
    \node at (2,-1.5) {$=$};
  \end{scope}

  \begin{scope}[xshift=12cm,yshift=0.5cm]
    \begin{stringdiagram}{1}
      \diagslice{1}{0}{2}
      \nbraidslice{0}
      \diagslice{0}{2}{0}
    \end{stringdiagram}
    \node[above right of=v0] {$(a,b-1)$};
    \node[below left of=v2] {$(c,d)$};
    \node[below of=input_1,node distance=0.2cm,rotate=-20] {\tikz\draw[very thick,->] (0,0);};
    \node at (2,-1.5) {$=$};
  \end{scope}

  \begin{scope}[xshift=16cm]
    \begin{stringdiagram}{1}
      \diagslice{1}{0}{2}
      \diagslice{0}{2}{0}
    \end{stringdiagram}
    \node[above right of=v0] {$(a,b-1)$};
    \node[below left of=v1] {$(c,d+1)$};
    \node[below of=input_1,node distance=0.2cm,rotate=-20] {\tikz\draw[very thick,->] (0,0);};
  \end{scope}
  
\end{tikzpicture}
  \]
  Note that the first and last equalities are not Reidemeister I moves: they can simply be expressed as unboxing the composite morphisms $(a,b)$ and $(c,d+1)$, possibly with the help of Reidemeister II moves to create braids when required.
  This shows that $R(c)$ is isotopic to $R(c')$ as a braided monoidal morphism.

  If $T((a,b)) = T((c,d))$ then the sequence $(a,b),(c,d)$ is realized in $R(c)$ as follows, again up to vertical and horizontal symmetries:
  \[
  \begin{tikzpicture}[scale=0.5,every node/.style={node distance=0.5cm,scale=0.75}]
    \begin{stringdiagram}{0}
      \diagslice{0}{0}{2}
      \diagslice{1}{1}{1}
      \diagslice{1}{1}{1}
      \diagslice{0}{2}{0}
    \end{stringdiagram}
              \node at (-.5,-2) {\tikz\draw[very thick,->] (0,0);};
    \node[above of=v0] {$(c,d)$};
    \node[below of=v3] {$(a,b)$};

    \draw[dashed,fill=white] ($(v1)+(-.5,.5)$) rectangle ($(v2)+(.5,-.5)$);
    \node at (2.5,-2) {$=$};
    \begin{scope}[xshift=4cm,yshift=-.5cm]
      \begin{stringdiagram}{0}
        \diagslice{0}{0}{2}
        \diagslice{1}{1}{1}
        \diagslice{0}{2}{0}
      \end{stringdiagram}
          \node at (-.5,-1.5) {\tikz\draw[very thick,->] (0,0);};
      \node[above of=v0] {$(c,d)$};
      \node[below of=v2] {$(a,b)$};
    \end{scope}
  \end{tikzpicture}
  \]
  The dashed area in the left-hand side represents the rest of the knot. Because by construction we know that it does not cross the wire passing on its left, nor is it connected with anything else, we can abstract it away as a simple morphism taking one wire as input and one wire as output, as in the right-hand side. Then:
  \[
\begin{tikzpicture}[scale=0.5,every node/.style={node distance=0.5cm,scale=0.75}]
  \begin{stringdiagram}{0}
    \diagslice{0}{0}{2}
    \diagslice{1}{1}{1}
    \diagslice{0}{2}{0}
  \end{stringdiagram}
  \node at (-.5,-1.5) {\tikz\draw[very thick,->] (0,0);};
  \node[above of=v0] {$(c,d)$};
  \node[below of=v2] {$(a,b)$};
  \node at (2,-1.5) {$=$};
  \begin{scope}[xshift=4cm,yshift=.5cm]
    \begin{stringdiagram}{0}
      \diagslice{0}{0}{2}
      \diagslice{1}{1}{1}
      \nbraidslice{0}
      \diagslice{0}{2}{0}
    \end{stringdiagram}
    \node at (-.5,-1.5) {\tikz\draw[very thick,->] (0,0);};
    \node[above of=v0] {$(c,d)$};
    \node[below of=v3] {$(a,b-1)$};
    \node at (2,-2) {$=$};
  \end{scope}
  \begin{scope}[xshift=8cm,yshift=.5cm]
    \begin{stringdiagram}{0}
      \diagslice{0}{0}{2}
      \nbraidslice{0}
      \diagslice{0}{1}{1}
      \diagslice{0}{2}{0}
    \end{stringdiagram}
    \node at (.5,-2.5) {\tikz\draw[very thick,->] (0,0);};
    \node[above of=v0] {$(c,d)$};
    \node[below of=v3] {$(a,b-1)$};
    \node at (2,-2) {$=$};    
  \end{scope}
  \begin{scope}[xshift=12cm]
    \begin{stringdiagram}{0}
      \diagslice{0}{0}{2}
      \diagslice{0}{1}{1}
      \diagslice{0}{2}{0}
    \end{stringdiagram}
    \node[above of=v0] {$(c,d+1)$};
    \node[below of=v2] {$(a,b-1)$};
    \node at (.5,-1.5) {\tikz\draw[very thick,->] (0,0);};
  \end{scope}
\end{tikzpicture}
  \]
  So again $R(c)$ is isotopic to $R(c')$.

  So the $\diamond$ relation respects braided monoidal isotopy.
  But now, by assumption $W(c) = W(c')$ and $U(c) = U(c')$. By a sequence of $\diamond$ steps one can transfer the writhe of any element of $c$ to any other element. So $c$ and $c'$ are related by a series of $\diamond$ steps, so they are equal as braided monoidal morphisms.
\end{proof}

\subsection{Bridge isotopy}

In this section, we introduce a notion of knot isotopy which forbids the elimination of caps and cups, but still allows Reidemeister I moves.
  
\begin{definition}
  A knot diagram $k \in \CC$ is in \textbf{bridge position} if all caps appear above of
  all cups in its string diagram. The number of caps (or equivalently cups) is called the \textbf{bridge number} of the diagram.
\end{definition}
For instance, all knot diagrams of Figure~\ref{fig:example-knot} are in bridge position. Figure~\ref{fig:bridge-position} shows a knot diagram that is not in bridge position and an equivalent diagram in bridge position.
The following lemma shows that any knot diagram can be put in bridge position without cancelling any zig-zag, as illustrated by Figure~\ref{fig:bridge-position}.

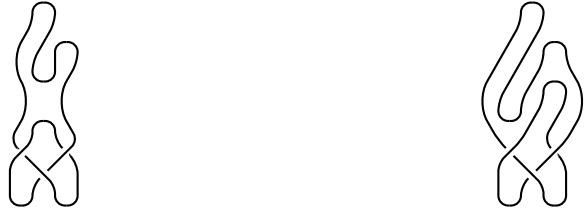
\begin{figure}
  \centering
  \begin{subfigure}{0.4\textwidth}
    \centering
    \begin{tikzpicture}[scale=0.3]
      \begin{stringdiagram}{0}
        \capslice{0}
        \capslice{2}
        \cupslice{1}
        \capslice{1}
        \startslice{4}
        \nbraidslice{0}
        \nbraidslice{0}
        \finishslice
        \pbraidslice{1}
        \startslice{0}
        \cupslice{0}
        \cupslice{0}
        \finishslice
      \end{stringdiagram}
    \end{tikzpicture}
    \caption{A knot diagram not in bridge position}
  \end{subfigure}
    \begin{subfigure}{0.4\textwidth}
    \centering
    \begin{tikzpicture}[scale=0.3]
      \begin{stringdiagram}{0}
        \capslice{0}
        \capslice{2}
        \capslice{3}
        \cupslice{1}
        \startslice{4}
        \nbraidslice{0}
        \nbraidslice{0}
        \finishslice
        \pbraidslice{1}
        \startslice{0}
        \cupslice{0}
        \cupslice{0}
        \finishslice
      \end{stringdiagram}
    \end{tikzpicture}
    \caption{An equivalent diagram in bridge position}
  \end{subfigure}
  \caption{Bridge position}
  \label{fig:bridge-position}
\end{figure}

\begin{lemma}
  Any knot diagram $k \in \CC$ can be expressed in bridge position via the axioms of braided
  monoidal categories.
\end{lemma}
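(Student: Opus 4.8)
The plan is to float every cap to the top of the diagram, one at a time, until they form a contiguous block above all the remaining slices. Since a diagram is in bridge position exactly when no cup sits above a cap, it suffices to collect all caps at the very top: once that is done everything below them consists only of braids and cups, so every cap lies above every cup. Throughout, the only tools I would use are the axioms of a braided monoidal category, namely the interchange law (bifunctoriality of $\otimes$), which lets two generators acting on disjoint sets of wires swap their heights, and the naturality of the braiding, i.e. the pull-through move of Figure~\ref{fig:pull-through}.

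The central step is a \emph{lift} lemma: any single cap can be moved strictly upward past the slice immediately above it. I would argue by cases on that slice. If it acts on wires disjoint from the cap's insertion gap --- in particular if it is itself a cap --- the two commute by interchange. If it is a cup, I claim interchange still applies: a cup has no outputs and a cap has no inputs, so the two wires deleted by the cup and the two wires created by the cap are four distinct strands, and stacking a deletion above an insertion on disjoint strands commutes. This is exactly where the freeness of $\CC$ is used: no adjunction (zig-zag) equation is invoked. The remaining case is a braid that \emph{straddles} the cap, meaning the cap is inserted between the two strands the braid crosses; here I would apply the pull-through move to slide the cap above the braid, at the cost of creating a couple of new crossings strictly below the cap. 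In every case the cap rises past exactly one slice while any newly created braids lie strictly below it, so iterating the lift terminates with the cap at the top.

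With the lift lemma in hand I would finish by induction on the number of caps not yet gathered at the top. Suppose the $k$ highest caps have already been assembled into a block at the top of the diagram; the region below that block then contains the remaining caps together with braids and cups. Picking one such cap and applying the lift lemma repeatedly brings it to the bottom of the block, and since each lift only rearranges slices strictly below the block, the caps already assembled are left untouched. After finitely many steps all caps form a single block at the top, so the diagram is in bridge position, as illustrated by Figure~\ref{fig:bridge-position}.

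I expect the main obstacle to be the straddling-braid case of the lift lemma: one must check that the pull-through move genuinely applies to a cap (a $0 \to 2$ generator) nested in a crossing, and that it displaces the cap strictly upward so that the overall process terminates even though the total number of crossings may grow. The cup-above-cap commutation also deserves care, since it is tempting to think a zig-zag must be cancelled; the point is precisely that in $\CC$ a cap and a cup are never composable along a shared strand, so only the interchange law --- available in any monoidal category --- is required.
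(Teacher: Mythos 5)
Your overall strategy---float each cap to the top, one slice at a time, using interchange where the slice above is positionally disjoint and naturality of the braiding where it is not, with termination because new crossings are only created strictly below the cap being lifted---is essentially the paper's argument, which resolves every obstruction uniformly with the pull-through move of Figure~\ref{fig:pull-through}. However, your case (2) contains a genuine error. The interchange law is about positions in the tensor product, not about strand identity: two slices commute only when one acts entirely to the left of the other in the wire ordering. If the cup above consumes wires at positions $i,i+1$ and the cap below is inserted at any other gap, you do get left/right separation and interchange applies; but if the cap is inserted exactly in the gap vacated by the cup, there is no separating position, and the two generators genuinely do not commute. Indeed, naively placing that cap directly above that cup in the same column would feed the cap's outputs into the cup and create a closed loop---a different morphism of $\CC$ with different connectivity---so no instance of bifunctoriality can perform the swap. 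This is not a hypothetical corner case: it is precisely the configuration of the third cap in Figure~\ref{fig:bridge-position}(a), which sits in the shadow of the cup above it. Your remark that ``a cap and a cup are never composable along a shared strand in $\CC$'' is a red herring here; the obstruction is positional, and freeness of $\CC$ (absence of zig-zag equations) is irrelevant to it.

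The repair is the tool you already deploy in your straddling-braid case: first use naturality of the braiding to displace the cap into an adjacent column (writing the cap at gap $j$ as the cap at gap $j\pm 1$ followed by a braiding of its two new wires past a neighbouring strand, all strictly below the cap), after which the cap and the cup are left/right separated and interchange finishes the lift. This is why the paper's proof does not bother with a case distinction at all: the pull-through move, applied to the cap viewed as a morphism out of the unit object, moves it past an arbitrary slice above regardless of that slice's nature, at the cost of braidings inserted below. With case (2) patched in this way your induction goes through (and your observation that it suffices to gather only the caps at the top, leaving the cups where they are, is a harmless simplification of the paper's symmetric treatment of caps and cups).
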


\begin{proof}
  While there is a cap or cup that is not on the first or last slice of the diagram,
  pull the cup down or pull the cap up using the pull-through move (naturality of the braid).
  This move can be executed regardless of the surroundings of the cap or cup.
\end{proof}
Note that bridge positions are not unique and there are generally
multiple pull-through moves available to pull a cap or cup towards the boundary of the diagram.

\begin{definition}[{\cite{otal1982presentations,jang2019stabilization}}]
  A \textbf{bridge isotopy} between two knot diagrams in bridge position is a sequence
  of moves (including Reidemeister I) such that at each step the diagram is in bridge position.
\end{definition}

Note that because cups and caps are required to stay apart throughout the isotopy, the bridge number of the diagram is preserved by bridge isotopy.

\begin{theorem}[{\cite{otal1982presentations}}] \label{thm:otal}
  Let $K, K'$ be two diagrams of the unknot in bridge position,
  with an equal bridge number. Then they are in bridge isotopy.
\end{theorem}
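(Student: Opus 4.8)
The plan is to reformulate a bridge presentation of bridge number $n$ topologically: the horizontal level separating the $n$ caps from the $n$ cups is a \emph{bridge sphere} $\Sigma$ meeting the knot $K \in \CC$ transversally in $2n$ points and cutting it into two trivial $n$-strand tangles. A bridge isotopy then becomes an ambient isotopy of the pair $(K,\Sigma)$ that keeps $\Sigma$ a bridge sphere, keeps $|K\cap\Sigma| = 2n$, and is free to perform Reidemeister~I moves, since these never push $K$ across $\Sigma$. In this language the statement asserts that any two bridge spheres of the unknot with the same number of intersection points are isotopic.

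I would argue by induction on the bridge number $n$. For the base case $n=1$ the middle of the diagram is a two-strand braid $\sigma_1^k$ between a single cap and a single cup, and each crossing can be removed by a Reidemeister~I move without disturbing bridge position, so every $1$-bridge presentation is bridge-isotopic to the standard cap-over-cup diagram $U_1$. For the inductive step I would use two ingredients. The first is that every bridge presentation of the unknot with $n \ge 2$ is \emph{perturbed}: because the unknot has bridge number $1$, taking a spanning disk $D$ (which exists precisely because $K$ is unknotted), putting it in general position with respect to $\Sigma$, and simplifying $D \cap \Sigma$ by innermost-circle and outermost-arc surgeries produces a cancelling cap-cup pair, exhibiting $K$ as a stabilization $\mathrm{stab}(K_0)$ of a bridge-$(n-1)$ presentation $K_0$. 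The second ingredient is the \emph{uniqueness of stabilization}: the diagram obtained by inserting a trivial cap-cup pair into $K_0$ does not depend, up to bridge isotopy, on where the pair is inserted.

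Granting these, the induction closes. By the inductive hypothesis $K_0$ is bridge-isotopic to the standard diagram $U_{n-1}$; this bridge isotopy carries the extra trivial pair along, so $\mathrm{stab}(K_0)$ is bridge-isotopic to $\mathrm{stab}(U_{n-1})$; and uniqueness of stabilization identifies the latter with the standard diagram $U_n$. Hence every bridge-$n$ presentation of the unknot is bridge-isotopic to $U_n$. Applying this to both $K$ and $K'$, whose bridge numbers agree, and composing one bridge isotopy with the reverse of the other, yields the desired bridge isotopy from $K$ to $K'$.

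The hard part will be the uniqueness of stabilization. Existence of a cancelling pair is a finite innermost-disk argument special to the unknot (it fails for general knots, which admit irreducible higher presentations), but showing that two stabilizations are bridge-isotopic requires comparing them through a generic one-parameter family of diagrams and checking that every critical event in the family --- births and deaths of critical points, and crossings of critical levels --- can be routed through configurations that remain in bridge position with constant bridge number, any framing discrepancy being absorbed by the permitted Reidemeister~I moves. This Cerf-theoretic control of the sweep-out is exactly the technical heart supplied by the cited work of \cite{otal1982presentations} and \cite{jang2019stabilization}, and is where a fully rigorous argument would demand the most care.
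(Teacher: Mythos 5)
The paper does not prove this theorem at all: it is imported as a black box from Otal's 1982 work (and the related stabilization literature), so there is no in-paper proof to compare yours against. Your outline is the standard modern shape of the argument --- translate bridge position into a bridge sphere, induct on bridge number, show every $n\ge 2$ bridge presentation of the unknot is perturbed, and invoke uniqueness of stabilization --- and nothing in it is wrong as a strategy.

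The issue is that, as written, your proposal does not prove the theorem; it reduces it to two lemmas that together \emph{are} the theorem. ``Every bridge presentation of the unknot with $n\ge 2$ admits a cancelling cap--cup pair'' is the main technical content of Otal's paper, and your sketch of it (take a spanning disk, simplify $D\cap\Sigma$ by innermost-circle and outermost-arc surgeries) glosses over the real difficulty: outermost-arc surgery changes the disk and potentially the presentation, and showing the process terminates in a configuration certifying a cancelling pair is exactly where Otal needs the sweep-out/graphic analysis, not a routine finiteness argument. Likewise you defer uniqueness of stabilization to the cited works. (Of the two, uniqueness of stabilization is actually the easier one --- a trivial cap--cup pair can be slid along the knot within bridge position --- so if you want to make partial progress, that is the piece worth writing out; perturbedness is the part you should not expect to recover by an innermost-disk argument alone.) You are candid that these are the ``technical heart supplied by the cited work,'' which is honest, but it means the proposal is a correct road map rather than a proof. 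Since the paper itself also treats the result as a citation, this is an acceptable level of detail for the role the theorem plays here, but it should be presented as such and not as an independent derivation.
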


\subsection{Unknotting with braided monoidal categories}

We can now combine the results above to establish a
polynomial time reduction between the unknotting problem
and the word problem for braided monoidal categories.

\begin{lemma} \label{lemma:final}
  Let $k$ be a diagram of the unknot. Then it is braided monoidal
  isotopic to $f(\ccc(k),W(k))$.
\end{lemma}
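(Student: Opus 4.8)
The plan is to reduce the statement to a purely combinatorial question about writhe by combining bridge position with Otal's theorem, and then to settle that question using Lemma~\ref{lemma:writhe-equalizing}. First I would place both diagrams in bridge position. By the lemma above that any knot diagram of $\CC$ can be put in bridge position through the braided monoidal axioms, $k$ is braided monoidal isotopic to a diagram $k_0$ in bridge position; since $\ccc$ and $W$ are invariant under these axioms, $\ccc(k_0) = \ccc(k)$ and $W(k_0) = W(k)$. The target $f(\ccc(k), W(k))$ is well defined (its parity hypothesis holds because $W(k) + T(k)/2$ is odd for every oriented knot) and, by Lemma~\ref{lemma:cap-cup-cycle-writhe-realization}, is itself an unknot diagram with cap-cup cycle $\ccc(k)$ and writhe $W(k)$, so it too can be placed in bridge position as a diagram $f_0$ carrying the same invariants. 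Both $k_0$ and $f_0$ have the same bridge number, namely half the length of $\ccc(k)$: this is the number of caps, and following the single strand of a knot visits each cap and each cup exactly once.

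With the bridge numbers equal, I would invoke Theorem~\ref{thm:otal}, which yields a bridge isotopy from $k_0$ to $f_0$. A bridge isotopy is a sequence of planar isotopies and Reidemeister moves of types I, II and III, each step staying in bridge position. The type II move (the braiding being invertible, Figure~\ref{fig:reidemeister-2}), the type III move (naturality of the braiding, i.e.\ the pull-through move of Figure~\ref{fig:pull-through}), and planar isotopies are all instances of the braided monoidal axioms. Hence the only ingredient of the bridge isotopy that is not already a braided monoidal move is the Reidemeister type I move, and the whole problem reduces to eliminating these.

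The key point is that a type I move leaves $\ccc$ unchanged while changing the writhe by $\pm 1$, so, as $W(k_0) = W(f_0)$, the type I moves occurring along the bridge isotopy contribute a total writhe change of zero. I would use this to remove them as follows: each type I curl can be slid along the strand, using only type II and III moves, until it lies next to a cap or cup, where it is unboxed into one unit of writhe on that twisted cap-cup, exactly as in the unboxing steps in the proof of Lemma~\ref{lemma:writhe-equalizing}. Two curls of opposite sign brought to the same place then annihilate by a type II move, which is braided monoidal. Because the total contribution of the type I moves is zero, they can be paired off and cancelled in this way, and the redistribution of writhe needed to bring cancelling curls together is precisely what Lemma~\ref{lemma:writhe-equalizing} certifies to be a braided monoidal isotopy. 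This turns the bridge isotopy into a braided monoidal isotopy from $k_0$ to $f_0$, and composing with the two bridge-position isotopies gives $k \cong f(\ccc(k), W(k))$.

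I expect the main obstacle to be exactly this elimination of the type I moves, for two reasons. First, the type I moves are interleaved with the braid simplifications the bridge isotopy performs, so one must show that the curls can genuinely be transported onto caps and cups and cancelled using only braided monoidal moves, never secretly reintroducing a type I move in order to slide a curl past a crossing. Second, one must keep careful track of orientations and writhe parities, since absorbing a curl flips the wire orientation of the cap or cup it enters; this bookkeeping is exactly what the signature of a twisted cap-cup and Lemma~\ref{lemma:writhe-equalizing} are designed to manage, and verifying that it closes up consistently over the entire isotopy is where the real work of the proof lies.
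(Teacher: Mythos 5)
Your proposal is correct and follows essentially the same route as the paper: equal bridge numbers from matching cap-cup cycles, Otal's theorem to get a bridge isotopy, elimination of the Reidemeister~I moves by absorbing the offending crossings into the writhe components of twisted cap-cups (possible precisely because bridge isotopy never cancels a cap against a cup), and finally Lemma~\ref{lemma:writhe-equalizing} to reconcile the resulting writhe distribution on the target. The only cosmetic difference is your extra ``slide the curl along the strand'' step, which is unnecessary here since in this string-diagrammatic setting a Reidemeister~I move is already localized as a braiding adjacent to a cap or cup.
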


\begin{proof}
  Recall that $f(\ccc(k),W(k))$ is the diagram of the unknot constructed in Lemma~\ref{lemma:cap-cup-cycle-writhe-realization} so that its cap-cup cycle and writhe match those of $k$.

  First, the bridge number of $k$ and $f(\ccc(k), W(k))$ are equal
  since $\ccc(f(\ccc(k), W(k))) = \ccc(k)$. So by
  Theorem~\ref{thm:otal}, the two diagrams are in bridge
  isotopy. This is not quite enough for us since this bridge isotopy
  might contain Reidemeister I moves, which are not allowed in braided
  monoidal isotopy.

  To get rid of those Reidemeister I moves, we follow the same approach as Theorem~\ref{thm:trace}. First, we view all caps and cups present at all stages of the isotopy as twisted caps and cups with a null writhe component. 
  Then, scanning the isotopy from start to end, we replace Reidemeister I moves by identities (when the Reidemeister I move cancels a braiding) or by Reidemeister II moves (when the Reidemeister I move introduces a braiding). Doing so, we bundle up the leftover braid with the cap or cup in the writhe component of the twisted cap-cup.
  \[
  \begin{tikzpicture}[scale=0.3,every node/.style={node distance=0.25cm,scale=0.5}]
  \begin{stringdiagram}{0}
    \capleft{0}
    \nbraidslice{0}
  \end{stringdiagram}
  \node at (1.5,-2) {$\rightarrow$};
  \begin{scope}[xshift=3cm,yshift=-.5cm]
    \begin{stringdiagram}{0}
      \capleft{0}
    \end{stringdiagram}
  \end{scope}
  \node[scale=1.5] at (7.5,-2) {becomes};
  \begin{scope}[xshift=11cm,yshift=-.75cm]
    \begin{stringdiagram}{0}
      \diagslice{0}{2}{2}
      \nbraidslice{0}
    \end{stringdiagram}
    \node[above of=v0] {$(a,b)$};
    \node at (2.25,-1.25) {$\rightarrow$};
    \begin{scope}[xshift=3cm,yshift=-.5cm]
      \begin{stringdiagram}{0}
        \diagslice{0}{2}{2}
      \end{stringdiagram}
      \node[above of=v0] {$(a,b+1)$};
    \end{scope}
  \end{scope}
\end{tikzpicture}
  \vspace{-.4cm}
  \]
  Since the isotopy is a bridge isotopy, caps and cups never get cancelled so adding this writhe component does not prevent any further step of the isotopy.

  After this transformation, the target of the isotopy might have some
  additional writhe components on some caps and cups. But the original
  target was $f(\ccc(k),W(k))$, which was defined as $R(c)$, the
  realization of a twisted cap-cup cycle $c$.  So the new target can
  also be seen as the realization of another twisted cap-cup cycle
  $c'$, which has identical writhe and cap-cup cycle, because it is in
  braided monoidal isotopy with the source.  Therefore we can apply
  Lemma~\ref{lemma:writhe-equalizing} and obtain a braided monoidal
  isotopy between the new target of our isotopy and $f(\ccc(k),W(k))$,
  completing the proof.
\end{proof}

\begin{theorem}
  The unknotting problem can be polynomially reduced to the word problem for braided monoidal categories.
\end{theorem}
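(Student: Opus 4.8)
The plan is to reduce \textsc{Unknot} to the word problem for the single free braided monoidal category $\CC$ defined above; since equality of two morphisms in $\CC$ is a special case of the word problem for braided monoidal categories (with the signature of $\CC$ held fixed), this suffices. Given a knot diagram $k$, regarded as a morphism of $\CC$, I would compute its cap-cup cycle $\ccc(k)$ and its writhe $W(k)$, and then form the canonical unknot diagram $f(\ccc(k),W(k)) \in \CC$ supplied by Lemma~\ref{lemma:cap-cup-cycle-writhe-realization}. The reduction outputs the word-problem instance asking whether $k = f(\ccc(k),W(k))$ as morphisms of $\CC$. The heart of the argument is the biconditional: $k$ is a diagram of the unknot if and only if this equality holds in $\CC$.

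The forward direction is immediate from Lemma~\ref{lemma:final}: if $k$ is a diagram of the unknot, then $k$ is braided monoidal isotopic to $f(\ccc(k),W(k))$, which is exactly equality in $\CC$.

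For the converse I would exploit that $f(\ccc(k),W(k))$ is, by its very construction in Lemma~\ref{lemma:cap-cup-cycle-writhe-realization}, already a diagram of the unknot. Suppose $k = f(\ccc(k),W(k))$ in $\CC$. Pushing this equality along the functor $\CC \to \ROTang$ makes the two diagrams equal in $\ROTang$, hence regularly isotopic as oriented tangles by Theorem~\ref{thm:rotang}. A regular isotopy is in particular a general isotopy, so $k$ and $f(\ccc(k),W(k))$ represent the same topological knot; as the latter is the unknot, so is $k$.

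It then remains to verify that the reduction runs in polynomial time. Tracing the single strand of $k$ to read off $\ccc(k)$, and summing crossing signs to obtain $W(k)$, are both linear in the size of the diagram, while the inductive constructions behind $f(\ccc(k),W(k))$ (Lemmas~\ref{lemma:cap-cup-cycle-realization}, \ref{lemma:twisted-cap-cup-cycle-realization} and~\ref{lemma:cap-cup-cycle-writhe-realization}) produce a diagram whose size is polynomial in the bridge number and writhe of $k$, hence polynomial overall. The substantive topological content has already been isolated in Lemma~\ref{lemma:final}, so the assembly here is mostly bookkeeping; the one step warranting care is the converse direction, where I must ensure that braided monoidal equality of $k$ with a bona fide unknot diagram really does force $k$ to be the unknot, and that the emitted instance stays polynomially bounded.
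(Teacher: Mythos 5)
Your proposal is correct and follows essentially the same route as the paper: compute $\ccc(k)$ and $W(k)$, build the canonical unknot $f(\ccc(k),W(k))$ via Lemma~\ref{lemma:cap-cup-cycle-writhe-realization}, and test braided monoidal equality, with Lemma~\ref{lemma:final} supplying the forward direction. Your explicit justification of the converse (pushing the equality along the functor $\CC \to \ROTang$ and invoking Theorem~\ref{thm:rotang} to get a general isotopy) is a welcome elaboration of a step the paper leaves implicit.
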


\begin{proof}
  Given a knot diagram $k$, we convert it to a braided monoidal word problem as follows. First, we orient it in an arbitrary way, obtaining a morphism $k' \in \ROTang$. We compute its writhe $W(k')$ and cap-cup cycle $\ccc(k')$. Then we compute $f(\ccc(k'), W(k'))$.
  All these steps can be done in polynomial time. Finally, the
  corresponding word problem is to determine whether $k$ and $f(\ccc(k'), W(k'))$ are in braided monoidal isotopy. If they are, then $k$ is the unknot. If they are not then by Lemma~\ref{lemma:final}, $k$ is knotted.
\end{proof}

A summary of the process of deciding whether a knot is unknotted given an algorithm to solve the word problem for braided monoidal categories is given in Figure~\ref{fig:whole-process}.

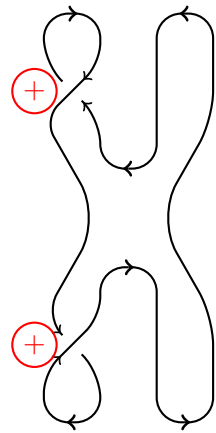
\begin{figure}
  \centering
  \begin{subfigure}{0.45\textwidth}
    \centering
    \begin{tikzpicture}[scale=0.75]
      \begin{stringdiagram}{0}
        \startslice{6}
        \capslice{0}
        \capslice{0}
        \capslice{0}
        \finishslice

        \startslice{6}
        \nbraidslice{1}
        \nbraidslice{0}
        \finishslice
        \startslice{6}
        \pbraidslice{0}
        \pbraidslice{0}
        \finishslice
        \startslice{6}
        \nbraidslice{1}
        \nbraidslice{0}
        \finishslice
        \wlog{Starting slice of cups}
        \startslice{0}
        \cupslice{0}
        \cupslice{0}
        \cupslice{0}
        \finishslice
      \end{stringdiagram}
    \end{tikzpicture}
    \caption{Initial knot diagram}
  \end{subfigure}
  \begin{subfigure}{0.45\textwidth}
    \centering
    \begin{tikzpicture}[scale=0.75]
      \begin{stringdiagram}{0}
        \startslice{6}
        \capright{0}
        \capright{0}
        \capleft{0}
        \finishslice

        \startslice{6}
        \nbraidslice{1}
        \nbraidslice{0}
        \finishslice
        \startslice{6}
        \pbraidslice{0}
        \pbraidslice{0}
        \finishslice
        \startslice{6}
        \nbraidslice{1}
        \nbraidslice{0}
        \finishslice
        \wlog{Starting slice of cups}
        \startslice{0}
        \cupleft{0}
        \cupleft{0}
        \cupright{0}
        \finishslice
        
      \end{stringdiagram}
    \end{tikzpicture}
    \caption{Oriented knot as a morphism in $\ROTang$}
  \end{subfigure}
    \begin{subfigure}{0.45\textwidth}
    \centering
    \begin{tikzpicture}[scale=0.75]
      \begin{stringdiagram}{0}
        \startslice{6}
        \capright{0}
        \capright{0}
        \capleft{0}
        \finishslice

        \startslice{6}
        \nbraidslice{1}
        \nbraidslice{0}
        \finishslice
        \startslice{6}
        \pbraidslice{0}
        \pbraidslice{0}
        \finishslice
        \startslice{6}
        \nbraidslice{1}
        \nbraidslice{0}
        \finishslice
        \wlog{Starting slice of cups}
        \startslice{0}
        \cupleft{0}
        \cupleft{0}
        \cupright{0}
        \finishslice
        
      \end{stringdiagram}
        \foreach \x/\y in {1/2,1/4,2/3,3/4} {
  \node[rotate=225]  at ($(place_\x_\y)+(.3,-.3)$) {\tikz\draw[thick,->] (0,0);};
  }
    \foreach \x/\y in {2/2,3/1} {
  \node[rotate=315]  at ($(place_\x_\y)+(.3,.3)$) {\tikz\draw[thick,->] (0,0);};
    }

 \foreach \x/\y in {1/5,2/4,3/3,3/5} {
  \node[rotate=135]  at ($(place_\x_\y)+(-.3,-.3)$) {\tikz\draw[thick,->] (0,0);};
 }

 \foreach \x/\y in {3/2,4/3} {
  \node[rotate=45]  at ($(place_\x_\y)+(-.3,.3)$) {\tikz\draw[thick,->] (0,0);};
  }
      \foreach \x/\y in {1/1,2/1,2/2,3/1} {
      \node[node distance=.5cm,left of=v\x-\y,thick,inner sep=2pt,red,circle,draw] {$+$};
    }
    \foreach \x/\y in {1/2,3/2} {
      \node[node distance=.5cm,left of=v\x-\y,thick,inner sep=2pt,blue,circle,draw] {$-$};
    }

    \end{tikzpicture}
    \caption{Computing the writhe: $+2$ in this case}
    \end{subfigure}
    \begin{subfigure}{0.45\textwidth}
      $( \scapl, \scupl, \scapr, \scupl, \scapr, \scupr )$
      \caption{The cap-cup cycle}
    \end{subfigure}
    \begin{subfigure}{0.45\textwidth}
      $( (\scapl, 0), (\scupl, 0), (\scapr, 1),$
      $ (\scupl, 1), (\scapr, 0), (\scupr,0) )$
      \caption{The twisted cap-cup cycle of Lemma~\ref{lemma:cap-cup-cycle-writhe-realization}}
    \end{subfigure}
    \begin{subfigure}{0.45\textwidth}
      \centering
      \begin{tikzpicture}[scale=0.75]
         \begin{stringdiagram}{0}
     \startslice{4}
     \capright{0}
     \capleft{0}
     \finishslice
     \nbraidslice{0}
     \cupleft{1}
     \capright{1}
     \nbraidslice{0}
     \startslice{0}
     \cupleft{0}
     \cupright{0}
     \finishslice
   \end{stringdiagram}
   \node[rotate=135]  at ($(place_2_2)+(-.3,.7)$) {\tikz\draw[thick,->] (0,0);};
   \node[rotate=45]  at ($(place_2_2)+(-.3,.3)$) {\tikz\draw[thick,->] (0,0);};
   \node[rotate=225]  at ($(place_4_1)+(.3,-.3)$) {\tikz\draw[thick,->] (0,0);};
   \node[rotate=315]  at ($(place_5_1)+(.3,.3)$) {\tikz\draw[thick,->] (0,0);};

   \foreach \x/\y in {1/1,4/1} {
      \node[node distance=.5cm,left of=v\x-\y,thick,inner sep=2pt,red,circle,draw] {$+$};
    }

    \end{tikzpicture}
      \caption{The realization of Lemma~\ref{lemma:twisted-cap-cup-cycle-realization}}
    \end{subfigure}
    \caption{Entire process of detecting knotedness using a solution to the braided monoidal word problem}
    \label{fig:whole-process}
\end{figure}


\begin{corollary}
  The word problem for the 3-cells of free Gray categories is at least as hard as the unknotting problem.
\end{corollary}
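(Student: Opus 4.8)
The plan is to combine the theorem just proved with the correspondence between braided monoidal categories and doubly degenerate Gray categories, due to \cite{gurski2011periodic}. Under this correspondence a braided monoidal category is precisely a Gray category with a single $0$-cell and a single $1$-cell (necessarily the identity): the $2$-cells of such a Gray category become the objects of the braided monoidal category, its $3$-cells become the morphisms, one of the Gray compositions provides the tensor product, and the Gray interchanger (the invertible $3$-cell witnessing pseudo-naturality of whiskering) provides the braiding, with the hexagon identities falling out of the Gray-category axioms. Since the theorem already reduces \textsc{Unknot} to the word problem for braided monoidal categories, it suffices to reduce the latter to the word problem for $3$-cells of free Gray categories; composing the two polynomial reductions then yields the corollary.

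For the reduction itself I would work syntactically on signatures. Starting from the monoidal signature generating $\CC$, I would build a Gray computad with a single $0$-cell $\ast$, no generating $1$-cells (so that the only $1$-cell is $1_\ast$), one generating $2$-cell $1_\ast \Rightarrow 1_\ast$ for each generating object, and one generating $3$-cell for each generating morphism, with source and target the $2$-cells corresponding to its domain and codomain. This translation is manifestly computable in polynomial time: each generator is copied across once, and the two input morphism expressions of the braided monoidal word problem are transcribed verbatim into $3$-cell expressions, reading each braiding as the corresponding interchanger, each composition $\circ$ as vertical composition of $3$-cells, and each tensor $\otimes$ as the relevant Gray composition.

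The correctness claim I need is that two morphisms of $\CC$ are equal if and only if the corresponding $3$-cells are equal in the free Gray category on this computad; equivalently, that the doubly degenerate part of the free Gray category on the computad is exactly the free braided monoidal category on the original signature. I expect this to be the main obstacle, since it asks for the stabilization equivalence of \cite{gurski2011periodic} to be compatible with freeness, rather than merely an equivalence of the underlying structures. Concretely I would verify that, because the unique $1$-cell is an identity, all whiskerings and all Gray structure maps other than the interchanger are trivial, so that no $3$-cell of the free Gray category lies outside the image of the translation, and that the Gray-category axioms restrict to exactly the braided monoidal axioms (associativity and unitality of the tensor, naturality of the braiding, and the two hexagons) with no further identifications. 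Granting this, equality of $3$-cells in the free Gray category coincides with equality of morphisms in $\CC$, and the corollary follows by composing with the theorem.
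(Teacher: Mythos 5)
Your proposal takes essentially the same route as the paper, which disposes of this corollary in one line by citing the characterization of doubly degenerate Gray categories as braided monoidal categories due to \cite{gurski2011periodic}. Your elaboration is correct and in fact more careful than the paper's proof, since you explicitly flag the need for the stabilization equivalence to be compatible with freeness --- a point the paper leaves implicit.
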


\begin{proof}
  Implied by the characterization of doubly degenerate Gray categories as braided monoidal categories~\citep{gurski2011periodic}.
\end{proof}

\section*{Conclusion}

We have established a connection between two areas. On one side, word problems arising naturally in category theory, which have not been studied much from a computational perspective so far. On the other side, the unknotting problem, which has been studied by knot theorists for more than a century.

Our hope with this connection is to make it evident that much more work is required on word problems in category theory, especially if we hope to develop practical proof assistants for higher categories. To our knowledge, no algorithm for the braided monoidal word problem is known to date.

\begin{conjecture}
  The word problem for 3-cells of Gray categories (and hence cells of braided monoidal categories) is decidable.
\end{conjecture}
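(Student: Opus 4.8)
The plan is to exhibit an algorithm that, on input two $3$-cell expressions $u,v$ of the free Gray category on a finite signature, halts and correctly reports whether $u = v$. The strategy is to pass to a geometric model, recast equality as a topological equivalence of diagrams, and then invoke the algorithmic solvability of isotopy problems in low dimensions. First I would set up a faithful graphical calculus for free Gray categories: by the semistrictification and coherence theorems, a $3$-cell is represented by a \emph{surface diagram}, a generic stratified surface properly embedded in the cube $[0,1]^3$ whose $0$-, $1$- and $2$-strata are labelled by the generating $3$-, $2$- and $1$-cells, with the three axes recording the three composition directions and the boundary recording domain and codomain. The target of this step is a soundness-and-completeness statement analogous to Theorem~\ref{thm:diag-coherence-braided}: $u = v$ if and only if their surface diagrams are related by a \emph{generic isotopy}, that is, an ambient isotopy of $[0,1]^3$ fixing the boundary and passing only through generic, Morse-like configurations. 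The characteristic feature here is the interchanger: swapping the heights of two independent $2$-cells is witnessed by an invertible $3$-cell, which geometrically is exactly the move letting two sheets slide past one another, and which is the source of the knotting phenomena exploited in the main theorem.

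Second, I would reduce this continuous equivalence to a combinatorial one, by proving that generic isotopy is generated by a finite, local list of moves — the three-dimensional analogues of the Reidemeister and movie moves (swallowtails, cusp--crossing passages, sheet interchanges, and so on). This turns the word problem into the question of whether two finite combinatorial diagrams are connected by a finite sequence of local moves, from which semi-decidability of \emph{equality} is immediate: one enumerates all finite move sequences and checks for a match. The braided-monoidal reduction of Corollary~\ref{coro:rotang-unknot} and the hardness theorem both live inside this picture as the special doubly-degenerate case.

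Third — and this is where the real work lies — I would obtain a matching procedure certifying \emph{inequality}, so that the two semi-algorithms together decide the problem. Here I would triangulate the complement of the surface diagram, recast generic isotopy as an isotopy problem for a decorated, framed stratified set in a compact piecewise-linear $3$-manifold with boundary, and appeal to the algorithmic solvability of such problems in the Haken--Hemion--Matveev tradition and its modern extensions (normal-surface theory, and the decidability of the isotopy and homeomorphism problems for $3$-manifolds). The unknot-hardness established above is reassuring rather than obstructing: it merely says that any such procedure must in particular solve \textsc{Unknot}, and \textsc{Unknot} is decidable (indeed in $\mathrm{NP} \cap \mathrm{co\text{-}NP}$), so the topological input this approach requires is genuinely available.

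The main obstacle I anticipate is \emph{effectivity of the completeness step}: showing not merely that isotopic diagrams are related by local moves, but that there is a \emph{computable} bound on the number of moves needed (equivalently, on the size of an intermediate triangulation), so that the search is guaranteed to terminate. This is the Gray-categorical analogue of Lackenby's polynomial bound for unknotting, and without it the local-move reformulation yields only recursive enumerability of equalities rather than decidability. Establishing such a bound — or, alternatively, constructing a complete family of computable isotopy invariants separating inequivalent diagrams — is the crux of the problem, and is precisely why the statement is posed here as a conjecture rather than a theorem.
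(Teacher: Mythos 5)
This statement is posed in the paper as a \emph{conjecture}, with no proof offered: the surrounding discussion explicitly notes that no algorithm for the braided monoidal word problem is known, that the naive search through move sequences fails to terminate because Reidemeister~II can be applied indefinitely, and that Makkai-style methods are therefore inapplicable. Your text is, by your own closing admission, a research programme rather than a proof: steps~1--2 (a sound and complete surface-diagram calculus plus a finite generating set of local moves) would at best yield semi-decidability of equality, which is essentially the trivial direction, and the decisive ingredient --- a computable bound on move sequences, or a complete family of computable invariants certifying inequality --- is exactly what you leave open. That missing ingredient is the entire content of the conjecture, so the proposal does not establish the statement.

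Beyond this, the step most likely to fail as stated is your third one. The Haken--Hemion--Matveev machinery decides questions about knots, links and $3$-manifolds up to \emph{general} ambient isotopy or homeomorphism, whereas the objects here are labelled stratified surfaces in $[0,1]^3$ up to a strictly finer, direction-respecting, boundary-fixing equivalence. The paper wrestles with precisely this discrepancy one dimension down: equality in $\ROTang$ is \emph{regular} isotopy, not general isotopy (Theorem~\ref{thm:rotang}), and bridging the gap requires the writhe and turning-number bookkeeping of Theorem~\ref{thm:trace}, the bridge-isotopy input of Theorem~\ref{thm:otal}, and, in the free braided monoidal category $\CC$ where even zig-zag cancellation is unavailable, the careful writhe-transfer argument of Lemma~\ref{lemma:writhe-equalizing}. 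An oracle for topological isotopy of the underlying stratified set would thus not be sound --- it would identify diagrams differing by Reidemeister~I moves or by cap--cup cancellations that are not braided monoidal equalities --- and no analogue of normal-surface theory is known for the finer equivalence actually at issue. The unknot-hardness result is therefore not merely ``reassuring'': it shows your proposed reduction points in the opposite direction from what is needed, and deciding the finer relation remains open, which is exactly why the paper states this as a conjecture.
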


Again, the word problem we mean here is deciding the equality of morphisms up to the axioms of Gray categories and nothing else. Note that the naive algorithm consisting in exploring all expressions reachable from a given expression does not terminate, since the Reidemeister II move can be applied indefinitely. This makes approaches such as that of \cite{makkai2005word} inapplicable. Furthermore it is possible that the word problem becomes undecidable at a higher level, perhaps for similar reasons that the isotopy of four-dimensional manifolds is undecidable~\cite{markov1958insolubility,boone1968recursively}.

Another natural question arising from our work is whether the problem of knot equivalence could be reduced to the word problem for braided monoidal categories. Knot equivalence is the problem of determining if two knot diagrams represent the same knot. In this context, it seems more difficult to suppress the need for the yanking equations, so our results do not seem to adapt easily to this more general case.

\bibliographystyle{eptcs}
\bibliography{references}

\begin{thebibliography}{10}
\providecommand{\bibitemdeclare}[2]{}
\providecommand{\surnamestart}{}
\providecommand{\surnameend}{}
\providecommand{\urlprefix}{Available at }
\providecommand{\url}[1]{\texttt{#1}}
\providecommand{\href}[2]{\texttt{#2}}
\providecommand{\urlalt}[2]{\href{#1}{#2}}
\providecommand{\doi}[1]{doi:\urlalt{http://dx.doi.org/#1}{#1}}
\providecommand{\eprint}[1]{ArXiv:\urlalt{https://arxiv.org/abs/#1}{#1}}
\providecommand{\bibinfo}[2]{#2}

\bibitemdeclare{article}{artin1947theory}
\bibitem{artin1947theory}
\bibinfo{author}{Emil \surnamestart Artin\surnameend} (\bibinfo{year}{1947}):
  \emph{\bibinfo{title}{Theory of Braids}}.
\newblock {\sl \bibinfo{journal}{Ann. of Math}}
  \bibinfo{volume}{48}(\bibinfo{number}{2}), pp. \bibinfo{pages}{101--126},
  \doi{10.2307/1969218}.

\bibitemdeclare{incollection}{boone1968recursively}
\bibitem{boone1968recursively}
\bibinfo{author}{W.~W. \surnamestart Boone\surnameend},
  \bibinfo{author}{W.~\surnamestart Haken\surnameend} \&
  \bibinfo{author}{V.~\surnamestart Po{\'e}naru\surnameend}
  (\bibinfo{year}{1968}): \emph{\bibinfo{title}{On {{Recursively Unsolvable
  Problems}} in {{Topology}} and {{Their Classification}}}}.
\newblock In \bibinfo{editor}{H.~Arnold \surnamestart Schmidt\surnameend},
  \bibinfo{editor}{K.~\surnamestart Sch{\"u}tte\surnameend} \&
  \bibinfo{editor}{H.~J. \surnamestart Thiele\surnameend}, editors: {\sl
  \bibinfo{booktitle}{Studies in {{Logic}} and the {{Foundations}} of
  {{Mathematics}}}}, {\sl \bibinfo{series}{Contributions to {{Mathematical
  Logic}}}}~\bibinfo{volume}{50}, \bibinfo{publisher}{{Elsevier}}, pp.
  \bibinfo{pages}{37--74}, \doi{10.1016/S0049-237X(08)70518-4}.

\bibitemdeclare{article}{Dehn1910}
\bibitem{Dehn1910}
\bibinfo{author}{M.~\surnamestart Dehn\surnameend} (\bibinfo{year}{1910}):
  \emph{\bibinfo{title}{\"Uber Die Topologie Des Dreidimensionalen Raumes. (Mit
  16 Figuren Im Text)}}.
\newblock {\sl \bibinfo{journal}{Mathematische Annalen}} \bibinfo{volume}{69},
  pp. \bibinfo{pages}{137--168}, \doi{10.1007/BF01455155}.

\bibitemdeclare{article}{dehornoy2007efficient}
\bibitem{dehornoy2007efficient}
\bibinfo{author}{Patrick \surnamestart Dehornoy\surnameend}
  (\bibinfo{year}{2007}): \emph{\bibinfo{title}{Efficient Solutions to the
  Braid Isotopy Problem}}.
\newblock {\sl \bibinfo{journal}{arXiv:math/0703666}},
  \doi{10.1016/j.dam.2007.12.009}.
\newblock \eprint{math/0703666}.

\bibitemdeclare{article}{delpeuch2020word}
\bibitem{delpeuch2020word}
\bibinfo{author}{Antonin \surnamestart Delpeuch\surnameend}
  (\bibinfo{year}{2020}): \emph{\bibinfo{title}{The Word Problem for Double
  Categories}}.
\newblock {\sl \bibinfo{journal}{Theory and Applications of Categories}}
  \bibinfo{volume}{35}, pp. \bibinfo{pages}{1--18}.
\newblock \eprint{1907.09927}.

\bibitemdeclare{article}{delpeuch2018normalization-1}
\bibitem{delpeuch2018normalization-1}
\bibinfo{author}{Antonin \surnamestart Delpeuch\surnameend} \&
  \bibinfo{author}{Jamie \surnamestart Vicary\surnameend}
  (\bibinfo{year}{2018}): \emph{\bibinfo{title}{Normalization for Planar String
  Diagrams and a Quadratic Equivalence Algorithm}}.
\newblock {\sl \bibinfo{journal}{to appear in Logical Methods in Computer
  Science}}.
\newblock \eprint{1804.07832}.

\bibitemdeclare{article}{freyd1989braided}
\bibitem{freyd1989braided}
\bibinfo{author}{Peter~J \surnamestart Freyd\surnameend} \&
  \bibinfo{author}{David~N \surnamestart Yetter\surnameend}
  (\bibinfo{year}{1989}): \emph{\bibinfo{title}{Braided Compact Closed
  Categories with Applications to Low Dimensional Topology}}.
\newblock {\sl \bibinfo{journal}{Advances in Mathematics}}
  \bibinfo{volume}{77}(\bibinfo{number}{2}), pp. \bibinfo{pages}{156--182},
  \doi{10.1016/0001-8708(89)90018-2}.

\bibitemdeclare{article}{gurski2011periodic}
\bibitem{gurski2011periodic}
\bibinfo{author}{Nick \surnamestart Gurski\surnameend} \&
  \bibinfo{author}{Eugenia \surnamestart Cheng\surnameend}
  (\bibinfo{year}{2011}): \emph{\bibinfo{title}{The Periodic Table of
  N-Categories {{II}}: Degenerate Tricategories}}.
\newblock {\sl \bibinfo{journal}{Cahiers de Topologie et G\'eom\'etrie
  Diff\'erentielle Cat\'egoriques}} \bibinfo{volume}{52}(\bibinfo{number}{2}),
  p.~\bibinfo{pages}{45}.

\bibitemdeclare{article}{haken1961theorie}
\bibitem{haken1961theorie}
\bibinfo{author}{W.~\surnamestart Haken\surnameend} (\bibinfo{year}{1961}):
  \emph{\bibinfo{title}{Theorie Der {{Normalfl\"achen}}}}.
\newblock {\sl \bibinfo{journal}{Acta Math.}} \bibinfo{volume}{105}, pp.
  \bibinfo{pages}{245--375}, \doi{10.1007/BF02559591}.

\bibitemdeclare{article}{jang2019stabilization}
\bibitem{jang2019stabilization}
\bibinfo{author}{Yeonhee \surnamestart Jang\surnameend},
  \bibinfo{author}{Tsuyoshi \surnamestart Kobayashi\surnameend},
  \bibinfo{author}{Makoto \surnamestart Ozawa\surnameend} \&
  \bibinfo{author}{Kazuto \surnamestart Takao\surnameend}
  (\bibinfo{year}{2019}): \emph{\bibinfo{title}{Stabilization of Bridge
  Decompositions of Knots and Bridge Positions of Knot Types ({{The}} Theory of
  Transformation Groups and Its Applications)}}.
\newblock {\sl \bibinfo{journal}{RIMS Kokyuroku}} \bibinfo{volume}{2135}, pp.
  \bibinfo{pages}{23--28}.

\bibitemdeclare{article}{joyal1986braided}
\bibitem{joyal1986braided}
\bibinfo{author}{Andr{\'e} \surnamestart Joyal\surnameend} \&
  \bibinfo{author}{Ross \surnamestart Street\surnameend}
  (\bibinfo{year}{1986}): \emph{\bibinfo{title}{Braided Monoidal Categories}}.
\newblock {\sl \bibinfo{journal}{Mathematics Reports}} \bibinfo{volume}{86008}.

\bibitemdeclare{article}{joyal1991geometry}
\bibitem{joyal1991geometry}
\bibinfo{author}{Andr{\'e} \surnamestart Joyal\surnameend} \&
  \bibinfo{author}{Ross \surnamestart Street\surnameend}
  (\bibinfo{year}{1991}): \emph{\bibinfo{title}{The Geometry of Tensor
  Calculus, {{I}}}}.
\newblock {\sl \bibinfo{journal}{Advances in Mathematics}}
  \bibinfo{volume}{88}(\bibinfo{number}{1}), pp. \bibinfo{pages}{55--112},
  \doi{10.1016/0001-8708(91)90003-p}.

\bibitemdeclare{article}{joyal1993braided}
\bibitem{joyal1993braided}
\bibinfo{author}{Andr{\'e} \surnamestart Joyal\surnameend} \&
  \bibinfo{author}{Ross \surnamestart Street\surnameend}
  (\bibinfo{year}{1993}): \emph{\bibinfo{title}{Braided Tensor Categories}}.
\newblock {\sl \bibinfo{journal}{Advances in Mathematics}}
  \bibinfo{volume}{102}(\bibinfo{number}{1}), pp. \bibinfo{pages}{20--78},
  \doi{10.1006/aima.1993.1055}.

\bibitemdeclare{article}{lackenby2015polynomial}
\bibitem{lackenby2015polynomial}
\bibinfo{author}{Marc \surnamestart Lackenby\surnameend}
  (\bibinfo{year}{2015}): \emph{\bibinfo{title}{A Polynomial Upper Bound on
  {{Reidemeister}} Moves}}.
\newblock {\sl \bibinfo{journal}{Annals of Mathematics}}, pp.
  \bibinfo{pages}{491--564}, \doi{10.4007/annals.2015.182.2.3}.

\bibitemdeclare{article}{lackenby2019efficient}
\bibitem{lackenby2019efficient}
\bibinfo{author}{Marc \surnamestart Lackenby\surnameend}
  (\bibinfo{year}{2019}): \emph{\bibinfo{title}{The Efficient Certification of
  Knottedness and {{Thurston}} Norm}}.
\newblock {\sl \bibinfo{journal}{arXiv:1604.00290 [math]}}.
\newblock \eprint{1604.00290}.

\bibitemdeclare{unpublished}{makkai2005word}
\bibitem{makkai2005word}
\bibinfo{author}{M.~\surnamestart Makkai\surnameend} (\bibinfo{year}{2005}):
  \emph{\bibinfo{title}{The Word Problem for Computads}}.

\bibitemdeclare{article}{markov1958insolubility}
\bibitem{markov1958insolubility}
\bibinfo{author}{A.~\surnamestart Markov\surnameend} (\bibinfo{year}{1958}):
  \emph{\bibinfo{title}{{The insolubility of the problem of homeomorphy}}}.
\newblock {\sl \bibinfo{journal}{Doklady Akademii Nauk SSSR}}
  \bibinfo{volume}{121}, pp. \bibinfo{pages}{218--220}.

\bibitemdeclare{article}{otal1982presentations}
\bibitem{otal1982presentations}
\bibinfo{author}{Jean-Pierre \surnamestart Otal\surnameend}
  (\bibinfo{year}{1982}): \emph{\bibinfo{title}{{Pr\'esentations en ponts du
  n\oe ud trivial}}}.
\newblock {\sl \bibinfo{journal}{C. R. Acad. Sci., Paris, S\'er. I}}
  \bibinfo{volume}{294}, pp. \bibinfo{pages}{553--556}.

\bibitemdeclare{incollection}{selinger2010survey}
\bibitem{selinger2010survey}
\bibinfo{author}{P.~\surnamestart Selinger\surnameend} (\bibinfo{year}{2010}):
  \emph{\bibinfo{title}{A {{Survey}} of {{Graphical Languages}} for {{Monoidal
  Categories}}}}.
\newblock In \bibinfo{editor}{Bob \surnamestart Coecke\surnameend}, editor:
  {\sl \bibinfo{booktitle}{New {{Structures}} for {{Physics}}}}, {\sl
  \bibinfo{series}{Lecture {{Notes}} in {{Physics}}}} \bibinfo{volume}{813},
  \bibinfo{publisher}{{Springer Berlin Heidelberg}}, pp.
  \bibinfo{pages}{289--355}, \doi{10.1007/978-3-642-12821-9\_4}.

\bibitemdeclare{article}{trace1983reidemeister}
\bibitem{trace1983reidemeister}
\bibinfo{author}{Bruce \surnamestart Trace\surnameend} (\bibinfo{year}{1983}):
  \emph{\bibinfo{title}{On the {{Reidemeister}} Moves of a Classical Knot}}.
\newblock {\sl \bibinfo{journal}{Proceedings of the American Mathematical
  Society}} \bibinfo{volume}{89}(\bibinfo{number}{4}), pp.
  \bibinfo{pages}{722--724}, \doi{10.1090/S0002-9939-1983-0719004-4}.

\end{thebibliography}

\appendix

\end{document}